\journal{}
\theoremstyle{plain}
  \newtheorem{thm}{Theorem}[section]
  \newtheorem{lem}[thm]{Lemma}
  \newtheorem{prop}[thm]{Proposition}
  \newtheorem{cor}[thm]{Corollary}
\theoremstyle{definition}
  \newtheorem{exmp}[thm]{Example}
  \newtheorem{rem}[thm]{Remark}
\DeclareMathOperator{\dom}{dom}
\DeclareMathOperator{\cod}{cod}
\DeclareMathOperator{\ob}{ob}
\def\ps@pprintTitle{%
 \let\@oddhead\@empty
  \let\@evenhead\@empty
  \def\@oddfoot{\vbox{\hsize=\textwidth\scriptsize
  \copyright 2015. This manuscript version is made available under the CC-BY-NC-ND 4.0 license \url{http://creativecommons.org/licenses/by-nc-nd/4.0/}. The published version is available at \url{http://dx.doi.org/10.1016/j.topol.2015.12.020}.\\
  }}%
  \let\@evenfoot\@oddfoot}
\newcommand{\da}{\downarrow}
\newcommand{\Da}{\Downarrow}
\newcommand{\ua}{\uparrow}
\newcommand{\ra}{\rightarrow}
\newcommand{\lda}{\swarrow}
\newcommand{\rda}{\searrow}
\newcommand{\Lra}{\Longrightarrow}
\newcommand{\rat}{\!\rightarrowtail\!}
\newcommand{\oto}{{\to\hspace*{-3.1ex}{\circ}\hspace*{1.9ex}}}
\newcommand{\bv}{\bigvee}
\newcommand{\bw}{\bigwedge}
\newcommand{\dv}{\dashv}
\newcommand{\nat}{\natural}
\newcommand{\si}{\sigma}
\newcommand{\CA}{\mathcal{A}}
\newcommand{\CB}{\mathcal{B}}
\newcommand{\CC}{\mathcal{C}}
\newcommand{\CD}{\mathcal{D}}
\newcommand{\CE}{\mathcal{E}}
\newcommand{\CJ}{\mathcal{J}}
\newcommand{\CQ}{\mathcal{Q}}
\newcommand{\Bf}{{\bf f}}
\newcommand{\Bg}{{\bf g}}
\newcommand{\Bh}{{\bf h}}
\newcommand{\sI}{{\sf I}}
\newcommand{\sP}{{\sf P}}
\newcommand{\sPd}{{\sf P}^{\dag}}
\newcommand{\sS}{{\sf S}}
\newcommand{\sY}{{\sf Y}}
\newcommand{\Cat}{{\bf Cat}}
\newcommand{\CAT}{{\bf CAT}}
\newcommand{\CATB}{{\CAT\Da_c\CB}}
\newcommand{\CatB}{{\Cat\Da_c\CB}}
\newcommand{\Chu}{{\bf Chu}}
\newcommand{\CHU}{{\bf CHU}}
\newcommand{\DIS}{{\bf DIS}}
\newcommand{\Met}{{\bf Met}}
\newcommand{\Ord}{{\bf Ord}}
\newcommand{\QUANT}{{\bf QUANT}}
\newcommand{\Rel}{{\bf Rel}}
\newcommand{\Set}{{\bf Set}}
\newcommand{\SUP}{{\bf SUP}}
\newcommand{\QCAT}{\CQ\text{-}\CAT}
\newcommand{\QCHU}{\CQ\text{-}\CHU}
\newcommand{\QCat}{\CQ\text{-}\Cat}
\newcommand{\QChu}{\CQ\text{-}\Chu}
\newcommand{\co}{{\rm co}}
\newcommand{\op}{{\rm op}}
\newcommand{\dPhi}{\Phi^{\da}}
\newcommand{\uPhi}{\Phi_{\ua}}
\newcommand{\dPsi}{\Psi^{\da}}
\newcommand{\uPsi}{\Psi_{\ua}}
\newcommand{\dXi}{\Xi^{\da}}
\newcommand{\uXi}{\Xi_{\ua}}
\newcommand{\PB}{\sP\CB}
\newcommand{\PC}{\sP\CC}
\newcommand{\PD}{\sP\CD}
\newcommand{\PE}{\sP\CE}
\newcommand{\PJ}{\sP\CJ}
\newcommand{\PdB}{\sP^{\dag}\CB}
\newcommand{\PdD}{\sP^{\dag}\CD}
\newcommand{\PdE}{\sP^{\dag}\CE}
\newcommand{\PdJ}{\sP^{\dag}\CJ}
\newcommand{\sYd}{\sY^{\dag}}
\newcommand{\QB}{\CQ_{\CB}}
\begin{document}

\begin{frontmatter}



\title{Topological categories, quantaloids and Isbell adjunctions}


\author{Lili Shen\fnref{A}}
\ead{shenlili@yorku.ca}

\author{Walter Tholen\corref{cor}\fnref{A}}
\ead{tholen@mathstat.yorku.ca}

\address{Department of Mathematics and Statistics, York University, Toronto, Ontario, Canada, M3J 1P3}
\address{\rm Dedicated to Eva Colebunders on the occasion of her 65th birthday}

\cortext[cor]{Corresponding author.}
\fntext[A]{Partial financial assistance by the Natural Sciences and Engineering Research Council of Canada is gratefully acknowledged.}

\begin{abstract}
In fairly elementary terms this paper presents, and expands upon, a recent result by Garner by which the notion of topologicity of a concrete functor is subsumed under the concept of total cocompleteness of enriched category theory. Motivated by some key results of the 1970s, the paper develops all needed ingredients from the theory of quantaloids in order to place essential results of categorical topology into the context of quantaloid-enriched category theory, a field that previously drew its motivation and applications from other domains, such as quantum logic and sheaf theory.
\end{abstract}

\begin{keyword}
Concrete category \sep topological category \sep enriched category \sep quantaloid \sep total category \sep Isbell adjunction


\MSC[2010] 	 18D20 \sep 54A99 \sep 06F99

\end{keyword}

\end{frontmatter}


\section{Introduction}

Garner's \cite{Garner2014} recent discovery that the fundamental notion of \emph{topologicity} of a concrete functor may be interpreted as precisely total (co)completeness for categories enriched in a free quantaloid reconciles two lines of research that for decades had been perceived by researchers in the respectively fields as almost intrinsically separate, occasionally with divisive arguments. While the latter notion is rooted in Eilenberg's and Kelly's enriched category theory (see \cite{Eilenberg1966,Kelly1982}) and the seminal paper by Street and Walters \cite{Street1978} on totality, the former notion goes back to Br{\"u}mmer's thesis \cite{Brummer1971} and the pivotal papers by Wyler \cite{Wyler1971,Wyler1971a} and Herrlich \cite{Herrlich1974} that led to the development of \emph{categorical topology} and the categorical exploration of a multitude of new structures; see, for example, the survey by Colebunders and Lowen \cite{Lowen-Colebunders2001}. The purpose of this paper is to present, and expand upon, Garner's result in the most accessible terms for readers who may not necessarily be familiar with the extensive apparatus of enriched category theory and, in particular, the theory of quantaloid-enriched categories, as developed mostly in Stubbe's papers \cite{Stubbe2005,Stubbe2006}.

Given a (potentially large) family of objects $X_i\ (i\in I)$ in a concrete category $\CE$ over a fixed category $\CB$ (which most often is simply the category of sets) and of maps $|X_i|\to Y$ in $\CB$ (with $|X_i|$ denoting the underlying $\CB$-object of $X_i$), topologicity of the functor $|\text{-}|:\CE\to\CB$ precisely asks for the existence of a ``best'' $\CE$-structure on $Y$, called a final lifting of the given structured sink of maps. Such liftings are needed not just for the formation of, say, topological sums and quotients (or in the dual situation, of products and subspaces), but in fact belong to the topologist's standard arsenal when defining new spaces from old in many situations, under varying terminology, such as that of a``weak topology''.  By contrast, at first sight, total cocompleteness appears to be a much more esoteric notion, as it entails a very strong existence requirement for colimits of possibly large diagrams, in both ordinary and enriched category theory. The surprising interpretation of final liftings as (so-called weighted) colimits lies at the heart of Garner's discovery. It was made possible by his quite simple observation that concrete categories over $\CB$ may be seen as categories enriched over the free quantaloid generated by $\CB$ (see Rosenthal \cite{Rosenthal1991}), with the categories enriched in such bicategories being first defined by Walters \cite{Walters1981}.

Without assuming any a-priori background by the reader on quantaloids, we show in Section \ref{concrete} how concrete categories over $\CB$ naturally lead to the formation of the free quantaloid over $\CB$ and their interpretation as categories enriched in that quantaloid. Section \ref{sink_topological} shows how a given structured sink may, without loss of generality, always be assumed to be a presheaf, which then produces Garner's result immediately. In Section \ref{cofibred} we discuss quantaloidal generalizations of Wyler's \cite{Wyler1971a} approach to topological functors, presented as simultaneous fibrations and cofibrations with large-complete fibres, and we carefully compare these notions with their enriched counterparts, namely that of being tensored, cotensored and conically complete, adding new facts and counter-examples to the known theory. Sections \ref{weighted_colimit} to \ref{universality} give a quick tour of the basic elements of quantaloid-enriched category theory as needed for the presentation of the self-dual concept of totality which, when applied in the concrete context, reproduces Hoffmann's \cite{Hoffmann1972} self-duality result for topological functors. A key tool here is provided by B{\' e}nabou's distributors \cite{Benabou1973} which, roughly speaking, generalize functors in the same way as relations generalize maps. Based on the paper by Shen and Zhang \cite{Shen2013a}, in Sections \ref{total_Chu} and \ref{IPhi} we show that a category is total precisely when it appears as the category of the fixed objects under the so-called Isbell adjunction induced by a distributor~--- which, among other things, reproduces the MacNeille completion of an ordered set. We also extend the known \cite{Stubbe2013a} characterization of totality as injectivity w.r.t. fully faithful functors from small quantaloid-enriched categories to large ones (Section \ref{total_injective}). When applied to concrete functors, it reproduces the characterization of topologicity through diagonal conditions, as first considered in Hu{\v s}ek \cite{Husek1967}, completed in Br{\" u}mmer-Hoffmann \cite{Brummer1976}, and generalized in Tholen-Wischnewsky \cite{Tholen1979a}.

In this paper we employ no specific strict set-theoretical regime, distinguishing only between sets (``small'') and classes (``(potentially) large'') and adding the prefix ``meta'' to categories whose objects may be large, thus trusting that our setting may be accommodated within the reader's favourite foundational framework. In fact, often the formation of such metacategories may be avoided as it is undertaken only for notational convenience.

We thank the anonymous referee for several helpful remarks.

\section{Concrete categories as free-quantaloid-enriched categories} \label{concrete}

For a (``base'') category $\CB$ with small hom-sets, a category $\CE$ that comes equipped with a faithful functor
$$|\text{-}|:\CE\to\CB$$
is usually called \emph{concrete} (over $\CB$) \cite{Adamek1990}. Referring to arrows in $\CB$ as \emph{maps}, we may then call a map $f:|X|\to|Y|$ with $X,Y\in\ob\CE$ a \emph{morphism} (or more precisely, an \emph{$\CE$-morphism}) if there is $f':X\to Y$ in $\CE$ with $|f'|=f$. In other words, since
$$\CE(X,Y)\cong|\CE(X,Y)|\subseteq\CB(|X|,|Y|),$$
being a morphism is a \emph{property} of maps between $\CE$-objects. It is therefore natural to associate with $\CB$ a new category $\QB$, the objects of which are those of $\CB$, but the arrows in $\QB$ are sets of maps:
$$\QB(S,T)=\{\Bf\mid\Bf\subseteq\CB(S,T)\}$$
for $S,T\in\ob\CB$. With
$$\Bg\circ\Bf=\{g\circ f\mid f\in\Bf,g\in\Bg\},$$
$${\bf 1}_S=\{1_S\}$$
(for $\Bf:S\to T$, $\Bg:T\to U$), $\QB$ becomes a category and, in fact, a \emph{quantaloid}, i.e., a category whose hom-sets are complete lattices such that the composition preserves suprema in each variable. Indeed, as is well known, $\QB$ is the \emph{free quantaloid} generated by $\CB$:

\begin{prop} {\rm\cite{Rosenthal1991}}
The assignment $\CB\mapsto\QB$ defines a left adjoint to the forgetful 2-functor $\QUANT\to\CAT$, which forgets the ordered structure of a quantaloid.
\end{prop}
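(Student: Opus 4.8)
The plan is to establish the universal property of $\QB$ directly, by exhibiting a unit and verifying the required isomorphism of hom-categories. Write $U\colon\QUANT\to\CAT$ for the forgetful $2$-functor. The unit is the functor $\eta_{\CB}\colon\CB\to U\QB$ that is the identity on objects and sends a map $f\colon S\to T$ to the singleton $\{f\}\colon S\to T$; functoriality is immediate from the definitions of $\circ$ and ${\bf 1}_S$ in $\QB$. The single structural fact that drives everything is that the singletons join-generate each hom-lattice: since joins in $\QB(S,T)=\{\Bf\mid\Bf\subseteq\CB(S,T)\}$ are unions, one has $\Bf=\bv_{f\in\Bf}\{f\}=\bv_{f\in\Bf}\eta_{\CB}(f)$ for every $\Bf\in\QB(S,T)$.

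Next I would prove the object-level universal property. Given a quantaloid $\CQ$ and a functor $F\colon\CB\to U\CQ$, any quantaloid homomorphism $\bar F\colon\QB\to\CQ$ with $U\bar F\circ\eta_{\CB}=F$ must agree with $F$ on objects and, preserving all joins, must satisfy $\bar F(\Bf)=\bar F\big(\bv_{f\in\Bf}\{f\}\big)=\bv_{f\in\Bf}F(f)$, which forces uniqueness. For existence I take this formula as a definition and check that it yields a homomorphism: it preserves identities since $\bar F\{1_S\}=F(1_S)=1_{FS}$; it preserves joins because unions distribute over $\bv$; and it preserves composition by the computation $\bar F(\Bg\circ\Bf)=\bv_{f\in\Bf,\,g\in\Bg}F(g\circ f)=\bv_{f,g}F(g)\circ F(f)=\big(\bv_{g}F(g)\big)\circ\big(\bv_{f}F(f)\big)=\bar F(\Bg)\circ\bar F(\Bf)$, where the steps use, in order, the definition of $\Bg\circ\Bf$, the functoriality of $F$, and the fact that composition in $\CQ$ preserves joins in each variable.

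For the $2$-dimensional part I would take a $2$-cell between homomorphisms $\bar F,\bar G\colon\QB\to\CQ$ in $\QUANT$ to be a family of arrows $\tau_S\colon FS\to GS$ in $\CQ$ (one for each object $S$) satisfying $\bar G(\Bf)\circ\tau_S=\tau_T\circ\bar F(\Bf)$ for all $\Bf\colon S\to T$ --- this being exactly a $\Sup$-enriched natural transformation --- whereas a $2$-cell between $F,G\colon\CB\to U\CQ$ is an ordinary natural transformation, i.e. the same component data subject to $G(f)\circ\tau_S=\tau_T\circ F(f)$ for all maps $f$. Since the components coincide, it remains only to see that the two naturality requirements are equivalent, and this is again the join computation $\bar G(\Bf)\circ\tau_S=\bv_{f\in\Bf}\big(G(f)\circ\tau_S\big)$ and $\tau_T\circ\bar F(\Bf)=\bv_{f\in\Bf}\big(\tau_T\circ F(f)\big)$, so equality for all $\Bf$ holds precisely when it holds on every singleton. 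Hence the assignment $F\mapsto\bar F$ and restriction along $\eta_{\CB}$ are mutually inverse functors, giving $\QUANT(\QB,\CQ)\cong\CAT(\CB,U\CQ)$; one then checks routinely that this isomorphism is $2$-natural in $\CB$ and $\CQ$, which is the asserted $2$-adjunction.

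I expect essentially no step to be hard once the join-generation fact is isolated. The only point demanding genuine care is the identification of the $2$-cells of $\QUANT$ together with the verification that naturality against \emph{all} arrows of $\QB$ collapses to naturality against the generating singletons; everything else is routine distributivity bookkeeping resting entirely on the fact that composition in a quantaloid preserves joins in each variable.
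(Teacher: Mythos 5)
Your proof is correct. Note that the paper itself offers no proof of this proposition: it is quoted from Rosenthal's paper on free quantaloids, so there is no internal argument to compare yours against step by step. Your direct verification of the universal property is the standard one and is complete: the unit $\eta_{\CB}$ sending $f$ to $\{f\}$, the observation that singletons join-generate the hom-lattices $\QB(S,T)$ (which forces $\bar F(\Bf)=\bv_{f\in\Bf}F(f)$ and hence uniqueness), and the distributivity computations for existence are exactly what is needed; in particular your composition check correctly isolates the only nontrivial ingredient, namely that composition in $\CQ$ preserves joins in each variable. The one point where you had to supply something the paper leaves implicit is the 2-dimensional structure of $\QUANT$: the paper never specifies its 2-cells, and your choice --- $\Sup$-enriched natural transformations, i.e.\ families $\tau_S$ with strictly commuting naturality squares --- is the convention under which the forgetful functor really is a 2-functor into $\CAT$ (a lax or ordered notion of 2-cell would not land in $\CAT$). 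With that choice your reduction of naturality against all $\Bf$ to naturality against the generating singletons is valid, including the degenerate case $\Bf=\varnothing$, where both sides of the naturality equation are the bottom element because composition also preserves empty joins. So the proposal stands as a correct filling-in of a result the paper merely cites.
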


A category $\CE$ concrete over $\CB$ may now be completely described by
\begin{itemize}
\item a class $\ob\CE$ of objects;
\item a function $|\text{-}|:\ob\CE\to\ob\QB(=\ob\CB)$ sending each object $X$ in $\CE$ to its \emph{extent} $|X|$ in $\CB$;
\item a family $\CE(X,Y)\in\QB(|X|,|Y|)$ $(X,Y\in\ob\CE)$, subject to
      \begin{itemize}
      \item ${\bf 1}_{|X|}\subseteq\CE(X,X)$,
      \item $\CE(Y,Z)\circ\CE(X,Y)\subseteq\CE(X,Z)$  $(X,Y,Z\in\ob\CE)$.
      \end{itemize}
\end{itemize}

A (concrete) functor $F:\CE\to\CD$ between concrete categories over $\CB$ (that must commute with the respective faithful functors to $\CB$) may then be described by a function $F:\ob\CE\to\ob\CD$ with
\begin{itemize}
\item $|FX|=|X|$,
\item $\CE(X,Y)\subseteq\CD(FX,FY)$ $(X,Y\in\ob\CE)$.
\end{itemize}

An order\footnote{In this paper, ``order'' refers to a reflexive and transitive relation (usually called preorder), with no requirement for antisymmetry, unless explicitly stated.} between concrete functors $F,G:\CE\to\CD$ is given by
\begin{align*}
F\leq G&\iff\forall X\in\ob\CE:\ 1_{|X|}:|FX|\to|GX|\ \text{is a}\ \CD\text{-morphism}\\
&\iff\forall X\in\ob\CE:\ {\bf 1}_{|X|}\subseteq\CD(FX,GX),
\end{align*}
rendering the (meta)category
$$\CATB$$
of concrete categories over $\CB$ as a 2-category, with 2-cells given by order.

Above we have described categories and functors concrete over $\CB$ as categories and functors \emph{enriched in} $\QB$. In fact, for any quantaloid $\CQ$, a \emph{$\CQ$-category} $\CE$ may be defined precisely as above, by just trading $\QB$ for $\CQ$ and ``$\subseteq$'' for the order ``$\leq$'' of the hom-sets of $\CQ$, and likewise for a \emph{$\CQ$-functor} $F:\CE\to\CD$ and the order of $\CQ$-functors. With $\QCAT$ denoting the resulting 2-(meta)category of $\CQ$-categories and $\CQ$-functors one may now formally confirm the equivalence of the descriptions given above, as follows:

\begin{prop} {\rm\cite{Garner2014}}
$\CATB$ and $\QB\text{-}\CAT$ are 2-equivalent.
\end{prop}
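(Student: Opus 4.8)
The plan is to produce explicit 2-functors going each way between $\CATB$ and $\QB\text{-}\CAT$ and to show they are pseudo-inverse, the whole argument being driven by faithfulness of the forgetful functors. In one direction I would define $\Phi\colon\CATB\to\QB\text{-}\CAT$ by sending a concrete category $(\CE,|\text{-}|)$ to the $\QB$-category with the same object class $\ob\CE$, the same extent function $|\text{-}|\colon\ob\CE\to\ob\QB$, and hom-arrows $|\CE(X,Y)|\subseteq\CB(|X|,|Y|)$, i.e.\ the set of maps $|X|\to|Y|$ that underlie an $\CE$-morphism. The required unit and composition inequalities ${\bf 1}_{|X|}\subseteq|\CE(X,X)|$ and $|\CE(Y,Z)|\circ|\CE(X,Y)|\subseteq|\CE(X,Z)|$ are precisely the statements that identities are morphisms and that morphisms compose, so $\Phi$ lands in $\QB\text{-}\CAT$. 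A concrete functor $F$ commutes with the forgetful functors, whence $|FX|=|X|$ and $|Ff'|=|f'|$, so it carries the subset $|\CE(X,Y)|$ into $|\CD(FX,FY)|$; this is exactly the defining clause of a $\QB$-functor, and since the order $F\leq G$ was set up (via ${\bf 1}_{|X|}\subseteq\CD(FX,GX)$) to coincide with the order of $\QB$-functors, $\Phi$ is a 2-functor.

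In the other direction I would define $\Psi\colon\QB\text{-}\CAT\to\CATB$ by realizing a $\QB$-category $\CA$ as a genuine concrete category: its objects are those of $\CA$, the underlying $\CB$-object of $X$ is its extent $|X|$, and a morphism $X\to Y$ is declared to be an element $f\in\CA(X,Y)$, viewed as a map $|X|\to|Y|$. The $\QB$-category axioms guarantee that $1_{|X|}$ is a morphism and that the composite in $\CB$ of two such maps again lies in the appropriate hom, so $\Psi\CA$ is a category, and the assignment sending a morphism to the map that it literally is gives a \emph{faithful} functor to $\CB$. The same two clauses turn a $\QB$-functor into a concrete functor and identify the two orders, so $\Psi$ is likewise a 2-functor.

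It then remains to compare the composites. The round trip $\Phi\Psi$ is the identity on the nose: a morphism of $\Psi\CA$ is by construction a map belonging to $\CA(X,Y)$, and its underlying map is itself, so $|\Psi\CA(X,Y)|=\CA(X,Y)$ and all data is recovered unchanged, functors and 2-cells included. For $\Psi\Phi$, starting from $(\CE,|\text{-}|)$ one obtains the concrete category whose morphisms $X\to Y$ are the maps in $|\CE(X,Y)|$, and the functor sending each $\CE$-morphism $f'$ to its underlying map $|f'|$ is a concrete functor $\CE\to\Psi\Phi\CE$ that is bijective on hom-sets precisely because $|\text{-}|$ is faithful; hence it is an isomorphism of concrete categories, natural in $\CE$ since concrete functors preserve underlying maps. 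I expect the only point requiring care to be exactly this asymmetry: $\Phi\Psi=\id$ holds strictly, whereas $\Psi\Phi$ is merely isomorphic to the identity, because $\Phi$ silently replaces each abstract hom-set $\CE(X,Y)$ by the isomorphic subset $|\CE(X,Y)|$. Checking that the resulting comparison is 2-natural in both the 1-cells and the 2-cells is where the bookkeeping lies, but it reduces entirely to injectivity of $|\text{-}|$ on morphisms. Together, $\Phi\Psi=\id$ and $\Psi\Phi\cong\id$ yield the asserted 2-equivalence.
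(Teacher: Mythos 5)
Your proposal is correct and is essentially the paper's own argument made explicit: the paper proves the proposition precisely by the dictionary you describe (objects with extents, hom-subsets $|\CE(X,Y)|\subseteq\CB(|X|,|Y|)$, the unit/composition inclusions, and the matching definition of the order on functors), leaving the formal verification of the 2-equivalence to the reader. Your added bookkeeping --- that $\Phi\Psi=\id$ holds strictly while $\Psi\Phi\cong\id$ only up to a concrete isomorphism coming from faithfulness of $|\text{-}|$ --- is exactly the right way to finish that verification.
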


In what follows, for a concrete category $\CE$ over $\CB$, we write $\overline{\CE}$ for the corresponding $\QB$-category. Hence, $\ob\overline{\CE}=\ob\CE$ and
$$\overline{\CE}(X,Y):=|\CE(X,Y)|\subseteq\CB(|X|,|Y|)$$
for all $X,Y\in\ob\CE$.

Let us also fix the notation for the right adjoints of the join preserving functions
$$-\circ u:\CQ(T,U)\to\CQ(S,U)\quad\text{and}\quad v\circ -:\CQ(S,T)\to\CQ(S,U)$$
for $u:S\to T$ and $v:T\to U$, respectively, in any quantaloid $\CQ$. They are defined such that the equivalences
$$v\leq w\lda u\iff v\circ u\leq w\iff u\leq v\rda w$$
hold for all $u$, $v$ as above, and $w:S\to U$ in $\CQ$, i.e.,
\begin{align*}
& w\lda u=\bv\{ v\in\CQ(T,U)\mid v\circ u\leq w\}\quad\text{and}\\
& v\rda w=\bv\{ u\in\CQ(S,T)\mid v\circ u\leq w\}.
\end{align*}

For $\CQ=\QB$ and $\Bf\subseteq\CB(S,T)$, $\Bg\subseteq\CB(T,U)$ and $\Bh\subseteq\CB(S,U)$, these formulas give
\begin{align*}
&\Bh\lda\Bf=\{g\in\CB(T,U)\mid\forall f\in\Bf:\ g\circ f\in\Bh\}\quad\text{and}\\
&\Bg\rda\Bh=\{f\in\CB(S,T)\mid\forall g\in\Bg:\ g\circ f\in\Bh\}.
\end{align*}

\section{Structured sinks as presheaves, topological functors as total categories} \label{sink_topological}

For a category $\CE$ concrete over $\CB$ and an object $T$ in $\CB$, a \emph{structured sink} $\si$ over $T$ is given by a (possibly large) family of objects $X_i$ in $\CE$ and maps $f_i:|X_i|\to T$, $i\in I$. A \emph{lifting} of $\si=(T,X_i,f_i)_{i\in I}$ is an $\CE$-object $Y$ with $|Y|=T$ such that all maps $f_i$ are $\CE$-morphisms, and the lifting is \emph{final} (w.r.t. $|\text{-}|:\CE\to\CB$) if any map $g:|Y|\to|Z|$ becomes an $\CE$-morphism as soon as all maps $g\circ f_i:|X_i|\to|Z|$ are $\CE$-morphisms. The finality property means precisely
\begin{align*}
\overline{\CE}(Y,Z)&=\{g\in\CB(|Y|,|Z|)\mid\forall i\in I:\ g\circ f_i\in\overline{\CE}(X_i,Z)\}\\
&=\bigcap_{i\in I}\overline{\CE}(X_i,Z)\lda\{f_i\}
\end{align*}
for all $Z\in\ob\CE$.

Replacing the singleton sets $\{f_i\}$ by subsets $\Bf_i\subseteq\CB(|X_i|,T)$, we may assume that the objects $X_i$ are pairwise distinct. In fact, since $\Bf_i$ is allowed to be empty, without loss of generality, we may always assume the indexing class of $\si$ to be $\ob\CE$. So, our structured sink $\si$ is now described as an $\ob\CE$-indexed family of subsets $\si_X\subseteq\CB(|X|,T)$, and its final lifting $Y$ is characterized by
\begin{equation} \label{si_sup} \tag{$\ast$}
\overline{\CE}(Y,Z)=\bigcap_{X\in\ob\CE}\overline{\CE}(X,Z)\lda\si_X
\end{equation}
for all $Z\in\ob\CE$.

Moreover, since for all $X,Z\in\ob\CE$, one trivially has
$$\overline{\CE}(X,Z)\lda\si_X=\bigcap_{X'\in\ob\CE}\overline{\CE}(X',Z)\lda(\si_X\circ\overline{\CE}(X',X)),$$
we may assume $\si$ to be closed under composition with $\CE$-morphisms from the right. That is,
\begin{equation} \label{si_dist} \tag{$\ast\ast$}
\si_X\circ\overline{\CE}(X',X)\subseteq\si_{X'}
\end{equation}
for all $X,X'\in\ob\CE$.

Recall that a faithful functor $|\text{-}|:\CE\to\CB$ is \emph{topological} if all structured sinks admit final liftings. The above considerations show:
\begin{prop}
A concrete category $\CE$ is topological over $\CB$ if, for all families $\si=(T,\si_X)_{X\in\ob\CE}$ with $\si_X\subseteq\CB(|X|,T)$ satisfying (\ref{si_dist}), there is $Y\in\ob\CE$ satisfying (\ref{si_sup}).
\end{prop}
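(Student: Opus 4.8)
The plan is to deduce the statement directly from the two reductions performed just above it, treating the proposition as the formal record of those reductions; the only work left is careful bookkeeping. Recall that $\CE$ being topological means: every structured sink $\si'=(T,X_i,f_i)_{i\in I}$ (a family of \emph{maps} $f_i\colon|X_i|\to T$) admits a final lifting, that is, an $\CE$-object $Y$ with $|Y|=T$ for which $\overline{\CE}(Y,Z)=\bigcap_{i\in I}\overline{\CE}(X_i,Z)\lda\{f_i\}$ holds for all $Z\in\ob\CE$. I would therefore fix an arbitrary such $\si'$ and manufacture its final lifting out of the hypothesis. The converse implication, which would upgrade the statement to a characterization, is immediate: a family $\si$ as in the proposition \emph{is} a structured sink, and its final lifting satisfies $(\ast)$ by the very definition of finality.

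The first reduction collapses $\si'$ into an $\ob\CE$-indexed family of subsets: for each $X\in\ob\CE$ put $\si_X=\{f_i\mid X_i=X\}\subseteq\CB(|X|,T)$, allowing $\si_X=\emptyset$. Since the right extension $\Bh\lda(-)$ sends unions to intersections, i.e.\ $\Bh\lda\bigcup_j\Bf_j=\bigcap_j(\Bh\lda\Bf_j)$, the finality equation for $\si'$ is verbatim the equation $(\ast)$ for $\si$, so $\si'$ and $\si$ have exactly the same final liftings. The second reduction enforces $(\ast\ast)$: replace $\si$ by $\tilde\si$ with $\tilde\si_{X'}=\bigcup_{X\in\ob\CE}\si_X\circ\overline{\CE}(X',X)$. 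Intersecting the displayed identity $\overline{\CE}(X,Z)\lda\si_X=\bigcap_{X'}\overline{\CE}(X',Z)\lda(\si_X\circ\overline{\CE}(X',X))$ over $X$ and again distributing $\lda$ over the resulting union gives $\bigcap_X\overline{\CE}(X,Z)\lda\si_X=\bigcap_{X'}\overline{\CE}(X',Z)\lda\tilde\si_{X'}$, so $(\ast)$ is unchanged once more; and $\tilde\si$ satisfies $(\ast\ast)$ directly from the composition law $\overline{\CE}(X',X)\circ\overline{\CE}(X'',X')\subseteq\overline{\CE}(X'',X)$.

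Applying the hypothesis to $\tilde\si$ yields $Y\in\ob\CE$ satisfying $(\ast)$, and it remains to recognise $Y$ as a genuine final lifting of $\si'$. The extent constraint $|Y|=T$ is forced by the two sides of $(\ast)$ being subsets of $\CB(|Y|,|Z|)$ and $\CB(T,|Z|)$ respectively. Setting $Z=Y$ in $(\ast)$ and using $1_T\in{\bf 1}_{|Y|}\subseteq\overline{\CE}(Y,Y)$ gives $1_T\in\overline{\CE}(X,Y)\lda\si_X$ for every $X$, whence every $f\in\si_X$---in particular every original map $f_i$---lies in $\overline{\CE}(X,Y)$ and is an $\CE$-morphism. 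The finality property is then exactly $(\ast)$, which we have shown coincides with the finality equation for $\si'$. Thus $Y$ is the required final lifting, and as $\si'$ was arbitrary, $\CE$ is topological.

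The one point I would guard against is that both reductions must preserve the set of final liftings on the nose, not merely up to some coarser identification; this is precisely what lets a hypothesis about families satisfying $(\ast\ast)$ settle the general case. Everything here hinges on the single elementary fact that $\Bh\lda(-)$ is order-reversing and carries joins of $\QB$ to meets, together with associativity of composition of subsets; once these are recorded the argument is pure bookkeeping, with no further existence input needed beyond the hypothesis itself.
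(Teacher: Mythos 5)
Your proof is correct and takes essentially the same route as the paper, whose ``proof'' of this proposition is precisely the two reductions preceding it: collapsing a structured sink to an $\ob\CE$-indexed family of subsets (using that $\lda$ converts unions into intersections) and then closing under right composition with $\CE$-morphisms via the displayed identity, so that the right-hand side of $(\ast)$ is unchanged. Your write-up merely makes explicit the bookkeeping the paper leaves implicit in ``the above considerations show,'' including the sound verification (via $Z=Y$) that the object supplied by the hypothesis really is a final lifting of the original sink.
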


In the terminology of quantaloid-enriched categories (see \cite{Shen2014,Shen2013a,Stubbe2005}), topologicity is therefore characterized by the existence of suprema of all presheaves. Indeed, for any quantaloid $\CQ$, a \emph{presheaf} $\varphi$ on a $\CQ$-category $\CE$ of \emph{extent} $T$ is given by a family of arrows
$$\varphi_X:|X|\ra T$$
in $\CQ$ with $\varphi_X\circ\CE(X',X)\leq\varphi_{X'}$ for all $X,X'\in\ob\CE$. A \emph{supremum} of $\varphi$ is an object $Y=\sup\varphi$ in $\CE$ with extent $T$ satisfying (\ref{si_sup}) transformed to the current context:
\begin{equation} \label{phi_sup} \tag{$\ast'$}
\CE(\sup\varphi,Z)=\bw_{X\in\ob\CE}\CE(X,Z)\lda\varphi_X
\end{equation}
for all $Z\in\ob\CE$.

This latter condition is expressed more compactly once the presheaves on $\CE$ have been organized as a (very large) $\CQ$-category $\PE$, with hom-arrows
$$\PE(\varphi,\psi)=\bw_{X\in\ob\CE}\psi_X\lda\varphi_X.$$
In fact, choosing for $\psi$ the presheaf $\CE(-,Z)$ of extent $|Z|$, condition (\ref{phi_sup}) reads as
\begin{equation} \label{phi_sup_PE} \tag{$\ast''$}
\CE(\sup\varphi,Z)=\PE(\varphi,\CE(-,Z))
\end{equation}
for all $Z\in\ob\CE$. In terms of the \emph{Yoneda} $\CQ$-functor
$$\sY_{\CE}:\CE\to\PE,\quad Z\mapsto\CE(-,Z),$$
this condition reads as
\begin{equation} \label{sup_Y_adjoint} \tag{$\ast'''$}
\CE(\sup\varphi,Z)=\PE(\varphi,\sY Z)
\end{equation}
for all $Z\in\ob\CE$, $\varphi\in\ob\PE$. In other words, with the usual adjointness terminology transferred to the $\CQ$-context, one obtains:

\begin{thm}[Garner \cite{Garner2014}] \label{topological_total}
A concrete category $\CE$ over $\CB$ is topological if, and only if, the $\QB$-category $\overline{\CE}$ is total; that is, if the Yoneda $\QB$-functor $\sY_{\overline{\CE}}$ has a left adjoint.
\end{thm}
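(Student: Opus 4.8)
The plan is to establish both implications through the intermediate assertion that every presheaf on $\overline{\CE}$ admits a supremum. The preceding Proposition, together with the reductions culminating in (\ref{sup_Y_adjoint}), has already done the conceptual work: a structured sink closed under right composition with $\overline{\CE}$-morphisms (condition (\ref{si_dist})) is exactly a presheaf $\varphi$ on $\overline{\CE}$, and a final lifting of it is exactly a supremum, i.e.\ an object $\sup\varphi$ of extent $|\varphi|$ satisfying (\ref{sup_Y_adjoint}). Consequently $\CE$ is topological over $\CB$ if and only if every $\varphi\in\ob\sP\overline{\CE}$ has a supremum, and it remains only to recognize this property as the existence of a left adjoint to $\sY_{\overline{\CE}}$.

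For the converse direction, which I expect to be immediate: if $\sY_{\overline{\CE}}$ has a left adjoint $L\colon\sP\overline{\CE}\to\overline{\CE}$, then, being a $\QB$-functor, $L$ preserves extents, so $|L\varphi|=|\varphi|$, and the defining hom-equation $\overline{\CE}(L\varphi,Z)=\sP\overline{\CE}(\varphi,\sY_{\overline{\CE}}Z)$ is precisely (\ref{sup_Y_adjoint}); hence $L\varphi=\sup\varphi$ and $\CE$ is topological. For the forward direction I would, assuming topologicity, define $L$ on objects by $L\varphi=\sup\varphi$. Extent-preservation is built into the notion of supremum, and (\ref{sup_Y_adjoint}) is again exactly the hom-equation required of a left adjoint; the only thing left to check is that $L$ is a genuine $\QB$-functor.

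That last point is where the real work lies, since a priori $\varphi\mapsto\sup\varphi$ is only a pointwise choice of representing objects, and I must verify the functor inequality $\sP\overline{\CE}(\varphi,\psi)\leq\overline{\CE}(\sup\varphi,\sup\psi)$. I would first extract a unit from (\ref{sup_Y_adjoint}): instantiating it at $Z=\sup\psi$ and using ${\bf 1}_{|\psi|}\subseteq\overline{\CE}(\sup\psi,\sup\psi)=\bw_X\overline{\CE}(X,\sup\psi)\lda\psi_X$ yields $\psi_X\leq\overline{\CE}(X,\sup\psi)$ for every $X$. Rewriting the target via (\ref{sup_Y_adjoint}) as $\overline{\CE}(\sup\varphi,\sup\psi)=\bw_X\overline{\CE}(X,\sup\psi)\lda\varphi_X$ and invoking the monotonicity of $\lda$ in its left argument together with this unit inequality then gives $\overline{\CE}(\sup\varphi,\sup\psi)\geq\bw_X\psi_X\lda\varphi_X=\sP\overline{\CE}(\varphi,\psi)$, as required. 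Since the $2$-cells of $\QB\text{-}\CAT$ are mere order relations, the required $\QB$-naturality is automatic, so no coherence conditions remain, and the hom-equation (\ref{sup_Y_adjoint}) completes the identification of totality with topologicity.
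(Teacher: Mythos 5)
Your proposal is correct and follows essentially the same route as the paper: the paper's own argument is exactly the chain of reductions identifying saturated structured sinks with presheaves and final liftings with suprema, after which it declares (\ref{sup_Y_adjoint}) to be the adjointness condition ``with the usual adjointness terminology transferred to the $\CQ$-context.'' The only difference is that you spell out what that phrase hides~--- extracting the unit inequality $\psi_X\leq\overline{\CE}(X,\sup\psi)$ and verifying the functor inequality $\sP\overline{\CE}(\varphi,\psi)\leq\overline{\CE}(\sup\varphi,\sup\psi)$ so that $\varphi\mapsto\sup\varphi$ is a genuine $\QB$-functor~--- which is a standard but worthwhile check, done correctly here.
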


For general $\CQ$, a $\CQ$-category $\CE$ is called \emph{total} if the Yoneda $\CQ$-functor $\sY_{\CE}$ has a left adjoint. Total $\CQ$-categories will be studied further beginning from Section \ref{weighted_colimit}.

\section{Cofibred categories as tensored categories} \label{cofibred}

Recall that a faithful functor $|\text{-}|:\CE\to\CB$ is a \emph{cofibration} if every structured singleton-sink has a final lifting; that is, if for every map $f:|X|\to T$ with $X\in\ob\CE$ there is some $Y\in\ob\CE$ with
$$\overline{\CE}(Y,Z)=\overline{\CE}(X,Z)\lda\{f\}$$
for all $Z\in\ob\CE$. We write $Y=f\star X$ and call $\CE$ \emph{cofibred} (over $\CB$) in this case. Dually, $\CE$ is \emph{fibred} (over $\CB$) if $|\text{-}|^{\op}:\CE^{\op}\to\CB^{\op}$ is a cofibration.

Denoting by $\CE_T$ the \emph{fibre} of $|\text{-}|$ at $T\in\ob\CB$, which is a possibly large ordered class, one has the following well-known characterization of topologicity of concrete categories (see \cite{Tholen1979}, \cite[Theorem II.5.9.1]{Hofmann2014}):

\begin{thm} \label{topological_fibre_complete}
A concrete category $\CE$ over $\CB$ is topological if, and only if, it is fibred and cofibred and its fibres are large-complete ordered classes.
\end{thm}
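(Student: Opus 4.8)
The plan is to run the argument through the presheaf reformulation of topologicity established in Section \ref{sink_topological}: by Theorem \ref{topological_total} (equivalently, by conditions (\ref{si_sup}) and (\ref{si_dist})), $\CE$ is topological over $\CB$ exactly when every presheaf $\varphi=(T,\varphi_X)_{X\in\ob\CE}$ admits a supremum satisfying (\ref{si_sup}). Throughout I would order each fibre $\CE_T$ by $W\leq W'\iff 1_T\in\overline{\CE}(W,W')$, noting that by composition this is equivalent to $\overline{\CE}(W',Z)\subseteq\overline{\CE}(W,Z)$ for all $Z\in\ob\CE$. The first observation I would record is that the order-theoretic supremum in $\CE_T$ controls only the \emph{outgoing} hom-arrows, a point that will decide where the real work lies.

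For the implication from fibred, cofibred and large-complete fibres to topological, I would build the supremum of a given presheaf $\varphi$ over $T$ explicitly. Each $f\in\varphi_X$ produces, by cofibredness, a lifting $f\star X$ of extent $T$, and I would set $Y:=\bigvee_{X\in\ob\CE,\,f\in\varphi_X}(f\star X)$ in the fibre $\CE_T$ (which exists by large-completeness). Since $\overline{\CE}(X,Z)\lda\varphi_X=\bigcap_{f\in\varphi_X}\overline{\CE}(f\star X,Z)$, verifying (\ref{si_sup}) reduces to proving $\overline{\CE}(Y,Z)=\bigcap_{X,\,f}\overline{\CE}(f\star X,Z)$. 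The inclusion ``$\subseteq$'' is immediate from $Y\geq f\star X$. For ``$\supseteq$'', given $g$ in the right-hand side, I would form the initial lifting $\hat Z$ of the single map $g:T\to|Z|$ at $Z$ (available by fibredness), characterized by $\overline{\CE}(W,\hat Z)=\{h:g\circ h\in\overline{\CE}(W,Z)\}$. The chain $g\circ f\in\overline{\CE}(X,Z)\iff g\in\overline{\CE}(f\star X,Z)\iff f\star X\leq\hat Z$ then forces $f\star X\leq\hat Z$ for all $X,f$, hence $Y\leq\hat Z$, i.e.\ $g\in\overline{\CE}(Y,Z)$. This single computation, which uses all three hypotheses simultaneously, is the key step.

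For the converse I would derive the three properties from the presheaf characterization. Cofibredness is just the special case of (\ref{si_sup}) for a presheaf concentrated at one object. Large-completeness of $\CE_T$ follows by taking, for a family $(W_i)_i$ in $\CE_T$, the presheaf $\varphi_X=\bigcup_i\overline{\CE}(X,W_i)$ (which satisfies (\ref{si_dist})): its supremum $Y$ satisfies $\overline{\CE}(Y,Z)=\bigcap_i\overline{\CE}(W_i,Z)$, and one checks directly that this $Y$ is the least upper bound of $(W_i)_i$; arbitrary infima then exist as suprema of the classes of lower bounds, so the fibre is large-complete.

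The hard part will be fibredness in the converse. As flagged above, the supremum of a presheaf pins down only the outgoing arrows $\overline{\CE}(\sup\varphi,-)$, whereas an initial lifting of a structured source is determined by the \emph{incoming} arrows $\overline{\CE}(-,\hat T)$; thus initial liftings cannot be manufactured directly from presheaf suprema, and the naive attempt to take the final lifting of the derived sink delivers the hom-arrows in the wrong variance. Producing initial liftings from final ones is precisely the self-duality of topologicity (Hoffmann's theorem), which is classical (\cite{Tholen1979}, \cite[Theorem II.5.9.1]{Hofmann2014}) and is recovered independently in the later sections of this paper; invoking it supplies fibredness and completes the proof.
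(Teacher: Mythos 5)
Your backward direction and two of the three components of the forward direction are correct. The backward argument (fibred $+$ cofibred $+$ complete fibres $\Rightarrow$ topological) is essentially the paper's, flattened into one computation: the paper reaches the same conclusion by chaining Proposition \ref{fibre_order_complete} (fibred $+$ order-complete $\Rightarrow$ conically cocomplete), Corollary \ref{tensor_cofibred_conical} (cofibred $+$ conically cocomplete $\Rightarrow$ tensored) and Theorem \ref{total_equivalent}, and your join $\bigvee_{X,f}(f\star X)$ in the fibre, verified against (\ref{si_sup}) via the initial lifting $\hat Z$, is exactly the content of those three steps performed at once; what this buys is independence from the enriched machinery of Sections \ref{weighted_colimit} and \ref{dualization}. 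Your derivations of cofibredness and of large-complete fibres from topologicity are also fine.

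The genuine gap is fibredness in the forward direction, and the reason you give for outsourcing it is mathematically false. You claim that since a supremum satisfying (\ref{si_sup}) prescribes only the outgoing arrows $\overline{\CE}(Y,-)$, ``initial liftings cannot be manufactured directly from presheaf suprema.'' They can, by the classical one-paragraph argument: given $g:T\to|Z|$, put $\si_X:=\{f\in\CB(|X|,T)\mid g\circ f\in\overline{\CE}(X,Z)\}$; this satisfies (\ref{si_dist}), so topologicity yields $Y$ of extent $T$ satisfying (\ref{si_sup}). Instantiating (\ref{si_sup}) at $Z$ gives $g\in\overline{\CE}(Y,Z)$ (that is exactly the defining condition of $\si$), and instantiating it at $Y$ itself gives the lifting property $\si_W\subseteq\overline{\CE}(W,Y)$ for every $W$; conversely, any $h\in\overline{\CE}(W,Y)$ satisfies $g\circ h\in\overline{\CE}(Y,Z)\circ\overline{\CE}(W,Y)\subseteq\overline{\CE}(W,Z)$, i.e.\ $h\in\si_W$. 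Hence $\overline{\CE}(W,Y)=\si_W$ for all $W$, which says precisely that $Y$ is the initial lifting of $g$: the incoming arrows \emph{are} pinned down, by the lifting property together with composition with $g$ --- the same two-sided mechanism you yourself used for $\hat Z$ in the backward direction. As written, your proof instead invokes Hoffmann's self-duality theorem, citing \cite{Tholen1979} and \cite[Theorem II.5.9.1]{Hofmann2014}, which are the very references the paper attaches to this theorem; so the nontrivial content is assumed rather than proved. (The paper also routes fibredness through self-duality, but it \emph{proves} that self-duality, in Corollary \ref{topological_dual} via Theorem \ref{total_cototal_equivalent}.) Inserting the paragraph above closes the gap and makes your proof self-contained --- and in fact more elementary than the paper's.
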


A proof of this theorem appears below as Corollary \ref{topological_dual}.

Wyler \cite{Wyler1971a} originally introduced topological categories using this characterization, which implies in particular the self-duality of topologicity, but restricting himself to concrete categories with \emph{small} fibres. Without that restriction, self-duality (so that $\CE$ is topological over $\CB$ if and only if $\CE^{\op}$ is topological over $\CB^{\op}$) was first established by Hoffmann \cite{Hoffmann1972,Hoffmann1976}.

Although Theorem \ref{topological_fibre_complete} has a natural generalization to the context of quantaloid-enriched categories (which will be discussed in the next section), there is no immediate ``translation'' of the notion of (co)fibration into that context. If, however, we require every $\QB$-arrow $\Bf:|X|\to T$ with $X\in\ob\CE$ (instead of just a single map $f$ in $\CB$) to have a final lifting $Y\in\ob\CE$, so that
\begin{equation} \label{Bf_tensor} \tag{$\dag$}
\overline{\CE}(Y,Z)=\overline{\CE}(X,Z)\lda\Bf
\end{equation}
for all $Z\in\ob\CE$, then we do obtain a notion which is familiar for quantaloid-enriched categories, as we recall next.

For any quantaloid $\CQ$, a $\CQ$-category $\CE$ is \emph{tensored} \cite{Shen2013a,Stubbe2006} if for all $X\in\ob\CE$ and $u:|X|\to T$ in $\CQ$, there is some $Y\in\ob\CE$ with $|Y|=T$ and
\begin{equation} \label{f_tensor} \tag{$\dag'$}
\CE(Y,Z)=\CE(X,Z)\lda u
\end{equation}
for all $Z\in\ob\CE$. Such object $Y$, also called the \emph{tensor} of $u$ and $X$, we denote by $u\star X$. The terminology and notation deserves some explanation.

First of all, for every object $S$ in $\CQ$, one has the $\CQ$-category $\{S\}$ with only one object $S$ with extent $S$, and hom-arrow $\{S\}(S,S)=1_S$. The presheaf $\CQ$-category $\sP\{S\}$ has arrows $u:S\to T$ in $\CQ$ as its objects, and their extent is their codomain: $| u|=T$; the hom-arrows are given by
$$\sP\{S\}( u, v)= v\lda u=\bv\{ w\in\CQ(T,U)\mid w\circ u\leq v\}$$
where $v:S\to U$.
Now, for every object $X$ of a $\CQ$-category $\CE$, there is a $\CQ$-functor
$$\CE(X,-):\CE\to\sP\{S\}$$
with $S=|X|$, and we can rewrite (\ref{f_tensor}) as
\begin{equation} \tag{$\dag''$}
\CE( u\star X,Z)=\sP\{S\}( u,\CE(X,Z))
\end{equation}
for all $Z\in\ob\CE$. This shows instantly:

\begin{prop}[Stubbe \cite{Stubbe2006}]
A $\CQ$-category $\CE$ is tensored if, and only if, for all objects $X$ in $\CE$, the $\CQ$-functor $\CE(X,-):\CE\to\sP\{|X|\}$ has a left adjoint (given by tensor).
\end{prop}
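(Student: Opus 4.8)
The plan is to recognize $(\dag'')$ as nothing but the hom-equality characterizing an adjunction between $\CQ$-functors, and then to read both implications off this identity. Recall that in the 2-category $\QCAT$, a $\CQ$-functor $F:\CD\to\CE$ is left adjoint to a $\CQ$-functor $G:\CE\to\CD$ if and only if
\[\CE(Fu,Z)=\CD(u,GZ)\]
for all $u\in\ob\CD$ and $Z\in\ob\CE$; since the hom-arrows live in the posets of $\CQ$, the naturality of such an isomorphism is automatic and the triangle identities collapse to this single equation. I would apply this with $\CD=\sP\{S\}$ for $S=|X|$ and $G=\CE(X,-)$, the latter being a $\CQ$-functor because the composition law $\CE(Z,Z')\circ\CE(X,Z)\leq\CE(X,Z')$ is, by the defining adjunction of $\lda$, precisely the inequality $\CE(Z,Z')\leq\CE(X,Z')\lda\CE(X,Z)=\sP\{S\}(\CE(X,Z),\CE(X,Z'))$, while its action on extents sends $Z$ (of extent $|Z|$) to $\CE(X,Z)$ (of extent $|Z|$).

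For the forward implication, suppose $\CE$ is tensored and fix $X$ with $S=|X|$. I would define a candidate left adjoint on objects by $Fu:=u\star X$, noting $|Fu|=|u|$ since the extent of $u\star X$ is the codomain of $u$. The defining property (\ref{f_tensor}) of the tensor, rewritten as $(\dag'')$, states exactly $\CE(Fu,Z)=\sP\{S\}(u,\CE(X,Z))=\sP\{S\}(u,GZ)$, which is the adjunction identity above. The only point needing care is that this object-assignment is automatically a $\CQ$-functor: taking $Z=Fu$ gives $1_{|Fu|}\leq\CE(Fu,Fu)=\sP\{S\}(u,GFu)$, which furnishes the unit, and composing this with the hom-arrows of $\sP\{S\}$ yields $\sP\{S\}(u,u')\leq\sP\{S\}(u,GFu')=\CE(Fu,Fu')$, so that $F$ is indeed a $\CQ$-functor with $F\dashv\CE(X,-)$.

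Conversely, if $\CE(X,-)$ admits a left adjoint $F$ for every $X$, I would simply specialize the adjunction identity: for $u:S\to T$ the object $Y:=Fu$ has $|Y|=|u|=T$ and satisfies $\CE(Y,Z)=\sP\{S\}(u,\CE(X,Z))=\CE(X,Z)\lda u$ for all $Z$, which is (\ref{f_tensor}); hence $Y=u\star X$ exhibits $\CE$ as tensored. The main (and essentially only) obstacle is thus the standard verification that a pointwise universal property, packaged as the hom-equality, upgrades to an honest adjunction of $\CQ$-functors with functorial left adjoint; granting this, both directions are a direct transcription of $(\dag'')$.
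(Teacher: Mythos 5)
Your proposal is correct and follows exactly the paper's route: the paper rewrites the tensor condition $(\dag')$ as $(\dag'')$ and observes that this is precisely the hom-equality $\CE(u\star X,Z)=\sP\{|X|\}(u,\CE(X,Z))$ characterizing an adjunction, whence the proposition follows ``instantly.'' The only difference is that you spell out the verifications the paper leaves implicit~--- that $\CE(X,-)$ is a $\CQ$-functor and that the object-assignment $u\mapsto u\star X$ is automatically $\CQ$-functorial via the unit~--- which are standard and carried out correctly.
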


As explained above, for the $\QB$-category $\overline{\CE}$ of a concrete category $\CE$ over $\CB$ to be tensored, it is necessary that $\CE$ be cofibred over $\CB$. Here is the precise differentiation of the two properties in question:

\begin{prop} \label{tensor_cofibration}
Let $\CE$ be a category concrete over $\CB$. Then the $\QB$-category $\overline{\CE}$ is tensored if, and only if, $\CE$ is cofibred with the additional property that for all $X\in\ob\CE$, $T\in\ob\CB$ and $\Bf\subseteq\CB(|X|,T)$, there is $Y\in\ob\CE$ with $|Y|=T$ and
\begin{equation} \label{tensor_cofibre} \tag{$\ddagger$}
\overline{\CE}(Y,Z)=\bigcap_{f\in\Bf}\overline{\CE}(f\star X,Z)
\end{equation}
for all $Z\in\ob\CE$.
\end{prop}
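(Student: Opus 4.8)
The plan is to reduce the whole statement to a single distributivity identity in the free quantaloid $\QB$: for all $X,Z\in\ob\CE$, $T\in\ob\CB$ and $\Bf\subseteq\CB(|X|,T)$,
\[
\overline{\CE}(X,Z)\lda\Bf=\bigcap_{f\in\Bf}\big(\overline{\CE}(X,Z)\lda\{f\}\big).
\]
First I would prove this. It is immediate from the explicit description of $\lda$ recorded in Section \ref{concrete}, since both sides equal $\{g\mid\forall f\in\Bf:\ g\circ f\in\overline{\CE}(X,Z)\}$; conceptually, it expresses that $\Bf=\bigvee_{f\in\Bf}\{f\}$ in $\QB$ together with the fact that $\overline{\CE}(X,Z)\lda(-):\QB(|X|,T)\to\QB(T,|Z|)$ turns suprema into infima, the latter being a formal consequence of composition preserving suprema in a quantaloid.

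Next I would observe that whenever $\CE$ is cofibred, the cofibration lifting of a single map is nothing but a singleton tensor: by the definition of $f\star X$ one has $\overline{\CE}(f\star X,Z)=\overline{\CE}(X,Z)\lda\{f\}$ for all $Z$. Substituting this into the right-hand side of (\ref{tensor_cofibre}) and applying the identity above yields
\[
\bigcap_{f\in\Bf}\overline{\CE}(f\star X,Z)=\overline{\CE}(X,Z)\lda\Bf,
\]
so that condition (\ref{tensor_cofibre}) asserts exactly that $Y$ satisfies the tensor condition (\ref{f_tensor}) for $u=\Bf$, i.e. that $Y=\Bf\star X$.

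With these two observations both implications become short. For necessity, suppose $\overline{\CE}$ is tensored. Applying tensoredness to a singleton $\{f\}$ produces precisely the final lifting of the one-element structured sink $f$, whence $\CE$ is cofibred with $f\star X=\{f\}\star X$; and for an arbitrary $\Bf$ the tensor $Y=\Bf\star X$ satisfies $\overline{\CE}(Y,Z)=\overline{\CE}(X,Z)\lda\Bf$, which by the rewriting above is the right-hand side of (\ref{tensor_cofibre}), giving the additional property. For sufficiency, given $X$ and $\Bf$ the object $Y$ delivered by the additional property satisfies $\overline{\CE}(Y,Z)=\bigcap_{f\in\Bf}\overline{\CE}(f\star X,Z)=\overline{\CE}(X,Z)\lda\Bf$, which is exactly (\ref{f_tensor}); hence every $\Bf$ admits a tensor and $\overline{\CE}$ is tensored.

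I anticipate no genuine difficulty here, since all the content is carried by the quantaloid identity that decomposes an arbitrary $\QB$-arrow into its singletons. The only points demanding care are bookkeeping ones: keeping the two uses of $\star$ distinct (the singleton cofibration lifting $f\star X$ versus the general tensor $\Bf\star X$), and checking the degenerate case $\Bf=\emptyset$, where both sides of (\ref{tensor_cofibre}) collapse to the full hom-set $\CB(T,|Z|)$ and the required $Y$ is a codiscrete lifting of extent $T$, still furnished by the additional property.
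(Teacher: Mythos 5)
Your proof is correct and takes essentially the same route as the paper's: both arguments reduce the statement to showing that the right-hand sides of ($\dag$) and ($\ddagger$) coincide, using the defining property $\overline{\CE}(f\star X,Z)=\overline{\CE}(X,Z)\lda\{f\}$ of the cofibration liftings. The only difference is presentational: where the paper verifies this by one element-wise chain of equivalences, you factor it through the quantaloid identity $\overline{\CE}(X,Z)\lda\Bf=\bigcap_{f\in\Bf}\bigl(\overline{\CE}(X,Z)\lda\{f\}\bigr)$, which is the same computation.
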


\begin{proof}
It suffices to show that the right-hand sides of (\ref{Bf_tensor}) and (\ref{tensor_cofibre}) coincide. But for all $g\in\CB(T,|Z|)$, one has
\begin{align*}
g\in\overline{\CE}(X,Z)\lda\Bf&\iff\forall f\in\Bf:\ g\circ f\ \text{is an}\ \CE\text{-morphism}\ X\to Z\\
&\iff\forall f\in\Bf:\ g\ \text{is an}\ \CE\text{-morphism}\ f\star X\to Z\\
&\iff g\in\bigcap_{f\in\Bf}\overline{\CE}(f\star X,Z).
\end{align*}
\end{proof}

\begin{cor}
For a non-empty category $\CE$ concrete over $\CB$ with $\overline{\CE}$ tensored, the functor $|\text{-}|:\CE\to\CB$ has a fully faithful left adjoint and right inverse functor.
\end{cor}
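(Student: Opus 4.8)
The plan is to exhibit the left adjoint explicitly as a ``discrete structure'' functor, obtained by tensoring a single fixed object with the bottom arrow of $\QB$. Since $\CE$ is non-empty, fix an object $X_0\in\ob\CE$. For each $T\in\ob\CB$ the empty subset $\emptyset\subseteq\CB(|X_0|,T)$ is an arrow $|X_0|\to T$ in $\QB$ (indeed the bottom element of the hom-lattice $\QB(|X_0|,T)$), so by tensoredness of $\overline{\CE}$ we may choose
$$FT:=\emptyset\star X_0,$$
an object of $\CE$ with $|FT|=T$. This already yields $|\text{-}|\circ F=\id$ on objects; the substance will be that $FT$ carries the ``finest'' structure, so that every map out of it is a morphism.

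First I would compute, for all $Z\in\ob\CE$,
$$\overline{\CE}(FT,Z)=\overline{\CE}(X_0,Z)\lda\emptyset=\CB(T,|Z|),$$
where the first equality is the defining property (\ref{f_tensor}) of the tensor and the second uses the explicit formula for $\lda$ in $\QB$: the condition ``$\forall f\in\emptyset:\ g\circ f\in\overline{\CE}(X_0,Z)$'' holds vacuously for every $g\in\CB(T,|Z|)$. Composing with the bijection $\CE(FT,Z)\cong\overline{\CE}(FT,Z)$ afforded by faithfulness of $|\text{-}|$, this produces a bijection
$$\CE(FT,Z)\cong\CB(T,|Z|),\qquad\phi\mapsto|\phi|,$$
natural in $Z$ (and, once $F$ is defined on arrows, in $T$). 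This is precisely the adjunction $F\dashv|\text{-}|$, with unit $\eta_T:T\to|FT|=T$ the map corresponding to $\id_{FT}$, namely $1_T$. In particular $F$ acts on a map $t:T\to T'$ as the unique $\CE$-morphism $FT\to FT'$ with underlying map $t$, which exists and is unique exactly because $\overline{\CE}(FT,FT')=\CB(T,T')$; functoriality of $F$ is then immediate.

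Finally, I would read off the two remaining assertions from the shape of the unit. Since $\eta_T=1_T$ is an isomorphism for every $T$, the left adjoint $F$ is fully faithful by the standard criterion; concretely, $F$ induces $\CB(T,T')\to\CE(FT,FT')\cong\overline{\CE}(FT,FT')=\CB(T,T')$, which is the identity and hence a bijection. Moreover $|Ft|=t$ together with $|FT|=T$ give $|\text{-}|\circ F=\id_{\CB}$, so $F$ is a right inverse. I expect the only genuinely substantive step to be the recognition that the right object to tensor is $X_0$ with the bottom arrow $\emptyset$, combined with the vacuous-quantifier evaluation of $\lda$; everything afterwards—naturality of the hom-bijection, functoriality of $F$, and the passage from ``unit is an identity'' to ``fully faithful and right inverse''—is routine bookkeeping.
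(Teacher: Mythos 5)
Your proposal is correct and follows essentially the same route as the paper: both instantiate the tensor at the empty family of maps, observe that $\overline{\CE}(\varnothing\star X_0,Z)=\overline{\CE}(X_0,Z)\lda\varnothing=\CB(T,|Z|)$ by vacuous quantification, and read off that this object is the value of the left adjoint at $T$. The paper phrases this via condition (\ref{tensor_cofibre}) of Proposition \ref{tensor_cofibration} (where the empty intersection gives the same right-hand side) and leaves the adjunction bookkeeping implicit, which you carry out explicitly.
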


\begin{proof}
Independently of the chosen $X$ in $\CE$, for $\Bf=\varnothing$ condition (\ref{tensor_cofibre}) reads as
$$\overline{\CE}(Y,Z)=\CB(T,|Z|)$$
for all $Z\in\ob\CE$, exhibiting $Y$ as the value of the left adjoint at $T$.
\end{proof}

\begin{rem} \label{cofibre_not_tensor}
\emph{For a cofibred category $\CE$ over $\CB$, the $\QB$-category $\overline{\CE}$ may fail to be tensored; in fact, $|\text{-}|:\CE\to\CB$ may fail to have a fully faithful left adjoint.} Indeed, for any category $\CB$ that is not just an ordered class (so that there is at least one hom-set $\CB(T,Z)$ with at least two morphisms), the codomain functor
$$(T\da\CB)\to\CB$$
of the comma category of $\CB$ under $T$ is cofibred over $\CB$ but does not have a fully faithful left adjoint (at $T$). Otherwise there would be a map $j:T\to T$ such that $1_T:T\to\cod j$ would serve as the adjunction unit at $T$, i.e.,
$$\CB(T,\cod g)=(T\da\CB)(j,g)$$
for all $g:T\to Z$. Considering first $g=1_T$ one sees that one must have also $j=1_T$, and then that $\CB(T,Z)$ can contain only at most one map.
\end{rem}

\begin{rem} \label{complete_cofibred_tensored}
In the order of the fibre $\CE_T$ of the functor $|\text{-}|:\CE\to\CB$ over $T\in\ob\CB$, an object $Y$ satisfying (\ref{tensor_cofibre}) is necessarily the join of the objects $(f\star X)_{f\in\Bf}$: just constrain the objects $Z$ to range in $\CE_T$. Hence, when $\overline{\CE}$ is tensored, the fibres of $|\text{-}|$ admit certain small-indexed joins.

However, \emph{for a cofibred category $\CE$ over $\CB$ whose fibres are complete, $\overline{\CE}$ may still fail to be tensored,} as is shown by the following example.
\end{rem}

\begin{exmp} \label{cofibred_not_tensored_exmp} \label{fibre_note_tensored_example}
Consider the category $\CB$ of complete lattices, with monotone (but not necessarily join-preserving) functions as maps; and let $\CE$ be the category of pointed complete lattices, with morphisms $f:(X,x_0)\to(Y,y_0)$ monotone functions satisfying $f(x_0)\leq y_0$. (Hence, $\CE$ is a lax version of the comma category $(1\da\CB)$.) Then $\CE$ is cofibred over $\CB$, and the fibre of the forgetful functor $\CE\to\CB$ at any $X$ is isomorphic to the complete lattice $X$ itself. But considering any monotone function $g:Y\to Z$ in $\CB$ that fails to preserve joins, so that there are $y_i\in Y$, $i\in I$ with
$$z=\bv_{i\in I}g(y_i)<g(y),\quad y=\bv_{i\in I}y_i,$$
for the constant maps $f_i:1\to Y$ with value $y_i$ we then have $(Y,y_i)=f_i\star 1$ and $g\in\overline{\CE}(f_i\star 1,(Z,z))$ for all $i\in I$, but $g\not\in\overline{\CE}((Y,y),(Z,z))$, even though $(Y,y)=\displaystyle\bv\limits_{i\in I}(Y,y_i)$ in the fibre $\CE_Y$. Hence, condition (\ref{tensor_cofibre}) of Proposition \ref{tensor_cofibration} is violated.
\end{exmp}

These remarks underline the difference between order-completeness and conical cocompleteness in quantaloid-enriched categories. Indeed, for a general quantaloid $\CQ$, a $\CQ$-category $\CE$ is \emph{order-complete} \cite{Stubbe2006} if, for all $T\in\ob\CQ$, the class $\CE_T$ of $\CE$-objects of extent $T$ ordered by
$$Y\leq Y'\iff 1_T\leq\CE(Y,Y')$$
admits all joins (and equivalently, meets). In the presheaf $\CQ$-category $\PE$, this order amounts to the componentwise order inherited from $\CQ$:
\begin{align*}
\varphi\leq\psi&\iff 1_T\leq\PE(\varphi,\psi)\\
&\iff\forall X\in\ob\CE:\ 1_T\leq\psi_X\lda\varphi_X\\
&\iff\forall X\in\ob\CE:\ \varphi_X\leq\psi_X.
\end{align*}
It is therefore clear that $\PE$ is order-complete.

One calls $\CE$ \emph{conically cocomplete} \cite{Stubbe2006} if, for all $T\in\ob\CQ$, joins of representable presheaves taken in $(\PE)_T$ have suprema, that is: if for any (possibly large) family of objects $Y_i\in\CE_T$, $i\in I$, the supremum $Y$ of the presheaf
$$\varphi=\bv_{i\in I}\CE(-,Y_i)$$
exists. Such $Y$ must necessarily satisfy $Y=\displaystyle\bv\limits_{i\in I}Y_i$ in $\CE_T$. Indeed, the restriction of condition (\ref{phi_sup}) (see Section \ref{sink_topological}) yields
\begin{align*}
Y\leq Z&\iff\forall X\in\ob\CE:\ 1_T\leq\CE(X,Z)\lda\varphi_X\\
&\iff\forall X\in\ob\CE:\ \varphi_X\leq\CE(X,Z)\\
&\iff\forall X\in\ob\CE,\forall i\in I:\ \CE(X,Y_i)\leq\CE(X,Z)\\
&\iff\forall i\in I:\ Y_i\leq Z.
\end{align*}

\begin{cor} \label{tensor_cofibred_conical}
If the concrete category $\CE$ over $\CB$ is cofibred with $\overline{\CE}$ conically cocomplete, then $\overline{\CE}$ is tensored.
\end{cor}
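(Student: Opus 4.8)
The plan is to reduce the claim to the verification of condition (\ref{tensor_cofibre}) of Proposition \ref{tensor_cofibration}. Since $\CE$ is cofibred, the tensor $f\star X$ of every single map $f:|X|\to T$ already exists, so by that proposition it remains only to produce, for each $X\in\ob\CE$, each $T\in\ob\CB$ and each $\Bf\subseteq\CB(|X|,T)$, an object $Y$ with $|Y|=T$ satisfying $\overline{\CE}(Y,Z)=\bigcap_{f\in\Bf}\overline{\CE}(f\star X,Z)$ for all $Z\in\ob\CE$. The guiding idea is that the correct presheaf to feed into conical cocompleteness is the join of the representables of the tensors $f\star X$: I would set $\varphi=\bv_{f\in\Bf}\overline{\CE}(-,f\star X)$, which is a presheaf of extent $T$ because every $f\star X$ lies in the fibre $\CE_T$, and which is by construction a join of representables indexed by the objects $f\star X\in\CE_T$.

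First I would invoke conical cocompleteness of $\overline{\CE}$ to obtain $Y:=\sup\varphi$, an object of $\CE_T$, so that $|Y|=T$ as required, and read off its defining equation (\ref{phi_sup}) as $\overline{\CE}(Y,Z)=\bw_{X'\in\ob\CE}\overline{\CE}(X',Z)\lda\varphi_{X'}$. Because the order on the fibre of $\PE$ is componentwise (Section \ref{cofibred}) and joins in the hom-lattices of $\QB$ are unions, one has $\varphi_{X'}=\bigcup_{f\in\Bf}\overline{\CE}(X',f\star X)$. Next I would distribute $\lda$ over this union by the general quantaloid identity $w\lda\bv_i u_i=\bw_i(w\lda u_i)$, which for $\QB$ is immediate from the explicit formula for $\lda$ recorded in Section \ref{concrete}. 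This rewrites the right-hand side as the double meet $\bw_{X'}\bw_{f}\big(\overline{\CE}(X',Z)\lda\overline{\CE}(X',f\star X)\big)$.

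Interchanging the two meets, the inner meet over $X'$ is exactly the Yoneda identity $\bw_{X'\in\ob\CE}\overline{\CE}(X',Z)\lda\overline{\CE}(X',W)=\overline{\CE}(W,Z)$ applied to $W=f\star X$, and one concludes $\overline{\CE}(Y,Z)=\bw_{f\in\Bf}\overline{\CE}(f\star X,Z)=\bigcap_{f\in\Bf}\overline{\CE}(f\star X,Z)$, which is precisely (\ref{tensor_cofibre}). The one ingredient here that is not pure bookkeeping is this Yoneda identity (the full faithfulness of the Yoneda $\QB$-functor), which I would verify directly: the inclusion $\supseteq$ is the composition law $\overline{\CE}(W,Z)\circ\overline{\CE}(X',W)\subseteq\overline{\CE}(X',Z)$, while $\subseteq$ follows by specializing $X'=W$ and using the unit law $1_{|W|}\subseteq\overline{\CE}(W,W)$ together with the antitonicity of $\lda$ in its second variable and $w\lda 1=w$. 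I expect the only genuinely delicate point to be the degenerate case $\Bf=\varnothing$: there $\varphi$ is the bottom presheaf of extent $T$, conical cocompleteness must be read as covering the empty family (yielding the bottom object of $\CE_T$), and both sides collapse to $\CB(T,|Z|)$, so the identity persists and the resulting $Y$ is the expected left-adjoint value at $T$.
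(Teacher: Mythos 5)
Your proposal is correct and follows essentially the same route as the paper: both form the presheaf $\varphi=\bigvee_{f\in\Bf}\overline{\CE}(-,f\star X)$, apply conical cocompleteness to its supremum, and verify that condition (\ref{tensor_cofibre}) of Proposition \ref{tensor_cofibration} holds, differing only in that the paper checks the coincidence of the two right-hand sides by an element-wise chase of maps $g$ and $h$, while you do the same computation abstractly via the identity $w\lda\bigvee_i u_i=\bigwedge_i(w\lda u_i)$ and full faithfulness of Yoneda. The content of the two verifications is identical (your Yoneda identity is exactly the paper's use of composition plus identities), so there is nothing to add.
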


\begin{proof}
We show that for all $X\in\ob\CE$, $T\in\ob\CB$ and $\Bf\subseteq\CB(|X|,T)$, an object $Y$ in $\CE$ with $|Y|=T$ satisfying (\ref{tensor_cofibre}) in Proposition \ref{tensor_cofibration} is exactly the conical colimit of the objects $f\star X$, $f\in\Bf$. It suffices to show that the right-hand sides of (\ref{tensor_cofibre}) and (\ref{si_sup}) in Section \ref{sink_topological} coincide for $\si=\displaystyle\bigcup\limits_{f\in\Bf}\overline{\CE}(-,f\star X)$. Indeed, for all $Z\in\ob\CE$, $g:T\to Z$ in $\CB$,
\begin{align*}
g\in\bigcap_{f\in\Bf}\overline{\CE}(f\star X,Z)\iff{}&\forall f\in\Bf:\ g\ \text{is an}\ \CE\text{-morphism}\ f\star X\to Z\\
\iff{}&\forall f\in\Bf,W\in\ob\CE,h\in\overline{\CE}(W,f\star X):\\
{}&g\circ h\ \text{is an}\ \CE\text{-morphism}\ W\to Z\\
\iff{}&\forall W\in\ob\CE,h\in\bigcup_{f\in\Bf}\overline{\CE}(W,f\star X):\\
{}&g\circ h\ \text{is an}\ \CE\text{-morphism}\ W\to Z\\
\iff{}& g\in\bigcap_{W\in\ob\CE}\Big(\overline{\CE}(W,Z)\lda\bigcup\limits_{f\in\Bf}\overline{\CE}(W,f\star X)\Big).
\end{align*}
\end{proof}

However, for a concrete category $\CE$ over $\CB$ such that $\overline{\CE}$ is tensored and order-complete, $\overline{\CE}$ may still fail to be conically cocomplete, as is shown by the following example:

\begin{exmp} \label{tensor_not_cocomplete}
Let the category $\CB$ have only one object with exactly two endomorphisms $i,e$, the non-identity morphism $e$ being idempotent. The objects of the category $\CE$ are the natural numbers, plus a largest element $\infty$ adjoined, and its hom-sets are given by
$$\CE(X,Y)=\begin{cases}
\{i,e\}, & \text{if}\ X\leq Y,\\
\{e\}, & \text{if}\ Y<X<\infty,\\
\varnothing, & \text{if}\ X=\infty,Y<\infty.
\end{cases}$$
With composition as in $\CB$ and $|\text{-}|:\CE\to\CB$ mapping morphisms identically, $\CE$ is concrete over $\CB$. Furthermore, $\overline{\CE}$ is tensored since, for all $X\in\ob\CE$, $\overline{\CE}(X,-)$ preserves meets (thus has a left adjoint). But $\overline{\CE}$ is not conically cocomplete since $\varphi=\displaystyle\bv\limits_{X<\infty}\overline{\CE}(-,X)$ does not have a supremum. Indeed, the supremum would have to be the join $\infty$ of the objects $X<\infty$; but for any $Z<\infty$ one has
$$\bigcap_{Y\in\ob\CE}\overline{\CE}(Y,Z)\lda\varphi_Y=\{e\}$$
while $\overline{\CE}(\infty,Z)=\varnothing$.
\end{exmp}

What is needed to make order-completeness equivalent to conical cocompleteness is fibredness:

\begin{prop} \label{fibre_order_complete}
For a fibred category $\CE$ over $\CB$, the $\QB$-category $\overline{\CE}$ is conically cocomplete whenever it is order-complete.
\end{prop}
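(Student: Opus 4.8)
The plan is to exhibit the supremum of $\varphi=\bv_{i\in I}\overline{\CE}(-,Y_i)$ \emph{explicitly} as the join $Y=\bv_{i\in I}Y_i$ formed in the fibre $\CE_T$, which exists by order-completeness, and then to verify that this $Y$ satisfies the full colimit identity (\ref{phi_sup}) for $\overline{\CE}$, not merely the fibre-join condition. Since the discussion preceding the statement already shows that any conical supremum of $\varphi$ must coincide with this fibre join, $Y$ is the only possible candidate, so the entire problem reduces to checking that $\overline{\CE}(Y,Z)=\bw_{X\in\ob\CE}\overline{\CE}(X,Z)\lda\varphi_X$ for all $Z\in\ob\CE$. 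I would also record at the outset that, because joins in $\PE$ are componentwise, $\varphi_X=\bv_{i\in I}\overline{\CE}(X,Y_i)$.

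The decisive tool is fibredness. For every object $Z$ and every map $g\colon T\to|Z|$ in $\CB$, fibredness provides an initial lifting, which I denote $g^*Z\in\ob\CE$, of extent $T$ and characterized by
$$\overline{\CE}(W,g^*Z)=\{g\}\rda\overline{\CE}(W,Z)=\{h\in\CB(|W|,T)\mid g\circ h\in\overline{\CE}(W,Z)\}$$
for all $W\in\ob\CE$. The crucial feature is that $g^*Z$ lies in the \emph{same} fibre $\CE_T$ as $Y$ and all the $Y_i$; this is exactly what lets me convert a statement about morphisms landing in a different fibre $\CE_{|Z|}$ into an inequality living inside the complete ordered class $\CE_T$.

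From here the argument is two short computations, each collapsing to the single inequality $Y\le g^*Z$ in $\CE_T$. First, testing the lifting identity at $W=Y$ with $h=1_T$ gives $g\in\overline{\CE}(Y,Z)\iff 1_T\in\overline{\CE}(Y,g^*Z)\iff Y\le g^*Z$. Second, I would unravel the right-hand side: a map $g\colon T\to|Z|$ lies in $\bw_X\overline{\CE}(X,Z)\lda\varphi_X$ iff $g\circ h\in\overline{\CE}(X,Z)$ for all $X$ and all $h\in\varphi_X$, i.e. (by the lifting identity) iff $\varphi_X\subseteq\overline{\CE}(X,g^*Z)$ for every $X$, which says precisely $\varphi\le\overline{\CE}(-,g^*Z)$ in $\PE$. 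Using $\varphi=\bv_i\overline{\CE}(-,Y_i)$ and the Yoneda property within the fibre, this is equivalent to $Y_i\le g^*Z$ for all $i$, hence to $\bv_i Y_i=Y\le g^*Z$. Comparing the two computations shows both sides of (\ref{phi_sup}) consist of exactly the maps $g$ with $Y\le g^*Z$, so the right-hand side equals $\overline{\CE}(Y,Z)$, which is what is required.

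The only genuine obstacle, and the place where fibredness is indispensable, is this passage between the cross-fibre finality condition and the internal order of $\CE_T$: without the initial liftings $g^*Z$ there is no way to rephrase ``$g\circ h$ is a morphism for all $h$'' as an inequality within the single complete fibre, and Example \ref{tensor_not_cocomplete} confirms that order-completeness by itself is insufficient. Everything else is routine bookkeeping with the lifting identities and with Yoneda inside a fixed fibre.
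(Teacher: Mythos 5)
Your proposal is correct and takes essentially the same route as the paper's proof: both form the join $Y=\bv_{i\in I}Y_i$ in the fibre $\CE_T$ (order-completeness) and then use fibredness to produce the initial lifting of $g\colon T\to|Z|$ (the paper's $\widetilde{Y}=(g\rat Z)$, your $g^*Z$), reducing membership in both sides of (\ref{phi_sup}) to the single inequality $Y\leq g^*Z$ inside $\CE_T$. The only cosmetic difference is that you establish the equality by showing both sides coincide with $\{g\mid Y\leq g^*Z\}$, whereas the paper dismisses the inclusion ``$\subseteq$'' as trivial and runs the initial-lifting argument only for ``$\supseteq$''.
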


\begin{proof}
Let $Y=\displaystyle\bv\limits_{i\in I}Y_i$ in $\CE_T$, for $T\in\ob\CB$. With $\varphi=\displaystyle\bv\limits_{i\in I}\CE(-,Y_i)$ we must show
$$\overline{\CE}(Y,Z)=\bigcap_{X\in\ob\CE}\overline{\CE}(X,Z)\lda\varphi_X$$
for all $Z\in\ob\CE$. But for a map $g:T\to|Z|$ in the right-hand side set and $\widetilde{Y}:=(g\rat Z)$ the $(|\text{-}|)$-initial lifting of $g$ one has $Y_i\leq\widetilde{Y}$ for every $i\in I$ (consider $X=Y_i$). Consequently, $Y\leq\widetilde{Y}$, and therefore $g:|Y|\to|Z|$ must be an $\CE$-morphism. This proves ``$\supseteq$'', the other inclusion being trivial.
\end{proof}

\begin{rem}
We note, however, that a conically cocomplete concrete category need not be fibred. Indeed, similarly to Example \ref{fibre_note_tensored_example} consider for $\CB$ the category of complete lattices with join-preserving maps, and let $\CE$ be pointed complete lattices with morphisms preserving joins strictly but base-points only laxly. Then $\CE$ is obviously not fibred over $\CB$ although $\overline{\CE}$ is conically cocomplete.
\end{rem}

\section{Distributors, weighted colimits, total cocompleteness} \label{weighted_colimit}

Suprema of presheaves (as used in Section \ref{sink_topological}) are special weighted colimits that we should mention here in full generality. For this, in turn, it is convenient to have the language of distributors at one's disposal. For a quantaloid $\CQ$, a \emph{$\CQ$-distributor} (also \emph{bimodule} or \emph{profunctor}) $\Phi:\CE\oto\CD$ of $\CQ$-categories $\CE,\CD$ is given by a family of arrows $\Phi(X,Y):|X|\to|Y|$ in $\CQ$ ($X\in\ob\CE,Y\in\ob\CD$) in $\CQ$ such that
$$\CD(Y,Y')\circ\Phi(X,Y)\circ\CE(X',X)\leq\Phi(X',Y')$$
($X,X'\in\ob\CE,Y,Y'\in\ob\CD$). Every $\CQ$-category $\CE$ may be considered as a $\CQ$-distributor $\CE:\CE\oto\CE$ and, in fact, serves as an identity $\CQ$-distributor when one defines the composite of $\Phi$ followed by $\Psi:\CD\oto\CC$ via
$$(\Psi\circ\Phi)(X,Z)=\bv_{Y\in\ob\CD}\Psi(Y,Z)\circ\Phi(X,Y).$$
With the pointwise order inherited from $\CQ$, we obtain the 2-(meta)category
$$\CQ\text{-}\DIS$$
of $\CQ$-categories and their $\CQ$-distributors. $\CQ\text{-}\DIS$ is in fact a (very large) quantaloid, i.e., enriched over $\SUP$, the (meta)category of large-complete ordered classes and sup-preserving functions.

Every $\CQ$-functor $F:\CE\to\CD$ gives rise to the $\CQ$-distributors
\begin{align*}
F_{\nat}:\CE\oto\CD,\quad& F_{\nat}(X,Y)=\CD(FX,Y),\\
F^{\nat}:\CD\oto\CE,\quad& F^{\nat}(Y,X)=\CD(Y,FX),
\end{align*}
so that one has 2-functors
$$(-)_{\nat}:(\QCAT)^{\co}\to\CQ\text{-}\DIS,\quad (-)^{\nat}:(\QCAT)^{\op}\to\CQ\text{-}\DIS,$$
which map objects identically. Here ``$\co$'' refers to the dualization of 2-cells; while $(-)_{\nat}$ is covariant on 1-cells but inverts their order, $(-)^{\nat}$ is contravariant on 1-cells but keeps their order:
\begin{align*}
F\leq F'&\iff\forall X\in\ob\CE:\ 1_{|X|}\in\CD(FX,F'X)\\
&\iff\forall X,Y\in\ob\CE:\ \CD(FX,Y)\geq\CD(F'X,Y)\iff F_{\nat}\geq (F')_{\nat}\\
&\iff\forall X,Y\in\ob\CE:\ \CD(Y,FX)\leq\CD(Y,F'X)\iff F^{\nat}\leq (F')^{\nat}.
\end{align*}

Since $\CE\leq F^{\nat}\circ F_{\nat}$ and $F_{\nat}\circ F^{\nat}\leq\CD$, one has $F_{\nat}\dv F^{\nat}$ in $\CQ\text{-}\DIS$, an important fact that we exploit next.

For a $\CQ$-category $\CE$, every presheaf $\varphi\in\PE$ may be considered as a $\CQ$-distributor $\varphi:\CE\oto\{|\varphi|\}$ when one writes $\varphi(X,|\varphi|)$ for $\varphi_X$. Every $\CQ$-functor $F:\CE\to\CD$ now gives the $\CQ$-functor
$$F^*:\PD\to\PE,\quad\psi\mapsto\psi\circ F_{\nat},$$
so that
$$(F^*\psi)_X=(\psi\circ F_{\nat})(X,|\psi|)=\psi(FX,|\psi|)=\psi_{FX}\quad (X\in\ob\CE).$$
The following lemma is well known:

\begin{lem} \label{Fshriek_Fstar_adjoint}
$F^*$ has a left adjoint $F_!$, given by
$$(F_!\varphi)_Y=\bv_{X\in\ob\CE}\varphi_X\circ\CD(Y,FX).$$
\end{lem}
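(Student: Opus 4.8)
The plan is to verify directly the defining hom-equation of a $\CQ$-adjunction, namely
$$\PD(F_!\varphi,\psi)=\PE(\varphi,F^*\psi)$$
for all $\varphi\in\ob\PE$ and $\psi\in\ob\PD$, since in $\QCAT$ this equality is exactly what it means for $F_!$ to be left adjoint to $F^*$. The most economical route is to recast both functors as composition with the adjoint pair of distributors already at hand. Viewing a presheaf on $\CE$ as a distributor $\varphi:\CE\oto\{|\varphi|\}$, one reads off from the composition formula in $\CQ\text{-}\DIS$ that $F_!\varphi=\varphi\circ F^{\nat}$ and that the given $F^*$ satisfies $F^*\psi=\psi\circ F_{\nat}$. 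In particular $F_!\varphi$ is automatically a presheaf of extent $|\varphi|$, being a composite of distributors, and $F_!$, as right-composition by the fixed distributor $F^{\nat}$, is automatically a $\CQ$-functor $\PE\to\PD$.

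I would then run the whole calculation inside the quantaloid $\CQ\text{-}\DIS$. First I record that, for presheaves regarded as distributors into a one-object $\CQ$-category, the presheaf-hom is nothing but the right lifting in $\CQ\text{-}\DIS$: unravelling the meet gives $\PD(\al,\be)=\be\lda\al$, and likewise on $\CE$. Applying this, together with the extension rule $w\lda(u\circ v)=(w\lda v)\lda u$ valid in any quantaloid, yields
$$\PD(F_!\varphi,\psi)=\psi\lda(\varphi\circ F^{\nat})=(\psi\lda F^{\nat})\lda\varphi.$$
The crucial simplification is then $\psi\lda F^{\nat}=\psi\circ F_{\nat}=F^*\psi$, after which the right-hand side reads $(F^*\psi)\lda\varphi=\PE(\varphi,F^*\psi)$, as required.

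The main work, and the only genuinely non-formal step, is the identity $\psi\lda F^{\nat}=\psi\circ F_{\nat}$, for which I would exploit the adjunction $F_{\nat}\dv F^{\nat}$ recorded just above, i.e. $\CE\leq F^{\nat}\circ F_{\nat}$ and $F_{\nat}\circ F^{\nat}\leq\CD$. The inequality $\psi\lda F^{\nat}\geq\psi\circ F_{\nat}$ follows by precomposing $\psi\circ F_{\nat}$ with $F^{\nat}$ and using $F_{\nat}\circ F^{\nat}\leq\CD$, while $\psi\lda F^{\nat}\leq\psi\circ F_{\nat}$ follows by inserting $\CE\leq F^{\nat}\circ F_{\nat}$ and then invoking the counit inequality $(\psi\lda F^{\nat})\circ F^{\nat}\leq\psi$ of the lifting. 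I expect the only point requiring care to be the bookkeeping of domains, codomains and composition order in the extension rule, since the variance of $\lda$ is easy to misapply; once the types are pinned down, each step is a one-line manipulation in $\CQ\text{-}\DIS$. As a sanity check one may instead verify the hom-equation componentwise, where the same content reappears as the Yoneda-type collapse $\bw_{Y\in\ob\CD}(\psi_Y\lda\CD(Y,FX))=\psi_{FX}=(F^*\psi)_X$, obtained from the presheaf inequalities for $\psi$ together with $1_{|FX|}\leq\CD(FX,FX)$.
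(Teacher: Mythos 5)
Your proof is correct and takes essentially the same route as the paper's: both rest on the translation $F_!\varphi=\varphi\circ F^{\nat}$, $F^*\psi=\psi\circ F_{\nat}$, and then deduce the adjunction formally from $F_{\nat}\dv F^{\nat}$ in $\CQ\text{-}\DIS$. The paper dismisses the final verification as ``easy''; your hom-equation computation, via the extension rule and the identity $\psi\lda F^{\nat}=\psi\circ F_{\nat}$, simply spells out that step in detail.
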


\begin{proof}
The given formula translates to
$$F_!\varphi=\varphi\circ F^{\nat}.$$
Consequently, $F_!\dv F^*$ follows easily from $F_{\nat}\dv F^{\nat}$.
\end{proof}

Let $D:\CJ\to\CE$ be a $\CQ$-functor (considered as a ``diagram'' in $\CE$). For $\varphi\in\PJ$, a \emph{weighted colimit of $D$ by $\varphi$} is an object $Y$ in $\CE$ with $|Y|=|\varphi|$ and
$$\CE(Y,Z)=\PJ(\varphi,\CE(D-,Z))$$
for all $Z\in\ob\CE$; one writes $Y=\varphi\star D$ in this case. Here $\CE(D-,Z)$ is the value of the composite $\CQ$-functors
$$\CE\to^{\sY_{\CE}}\PE\to^{D^*}\PJ$$
at $Z$.

Since $D_!\dv D^*$, so that $\PE(D_!\varphi,\CE(-,Z))=\PJ(\varphi,D^*\sY_{\CE}Z)$, the weighted colimit $\varphi\star D$ exists precisely when $\sup D_!\varphi$ exists, and then
$$\varphi\star D\cong{\sup}D_!\varphi.$$

\begin{rem} \label{sup_tensor_colimit}
The supremum of $\varphi\in\ob\PE$ is precisely the weighted colimit of the identity $\CQ$-functor of $\CE$ by $\varphi$: $\sup\varphi=\varphi\star 1_{\CE}$. The tensor of $X\in\ob\CE$ and $u:|X|\to T$ in $\CQ$ is precisely the weighted colimit of $\{|X|\}\to\CE$, $|X|\mapsto X$, by $u$: $u\star X=u\star(\{|X|\}\to\CE)$.
\end{rem}

A $\CQ$-category $\CE$ is \emph{totally cocomplete} if $\varphi\star D$ exists for any diagram $D$ in $\CE$ and weight $\varphi$; equivalently, if for every $\CQ$-functor $D$ with codomain $\CE$, the composite $\CQ$-functor $D^*\sY_{\CE}$ has a left adjoint. This is certainly the case when $\CE$ is total, so that $\sY_{\CE}$ has a left adjoint, since $D^*$ has always a left adjoint, by Lemma \ref{Fshriek_Fstar_adjoint}. More comprehensively, we may now state:

\begin{thm}[Stubbe \cite{Stubbe2005}] \label{total_equivalent}
The following are equivalent for a $\CQ$-category $\CE$:
\begin{itemize}
\item[\rm (i)] $\CE$ is total;
\item[\rm (ii)] $\CE$ is totally cocomplete;
\item[\rm (iii)] $\CE$ has all suprema;
\item[\rm (iv)] $\CE$ is tensored and conically cocomplete.
\end{itemize}
\end{thm}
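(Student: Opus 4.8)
The plan is to prove the theorem by reducing all four conditions to condition (iii), the existence of all suprema, establishing
$$(\text{i})\iff(\text{iii}),\qquad(\text{ii})\iff(\text{iii}),\qquad(\text{iv})\iff(\text{iii}).$$
Every one of these is immediate from the material already assembled, except for the implication $(\text{iv})\Rightarrow(\text{iii})$, which carries the real weight of the statement.

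For $(\text{i})\iff(\text{iii})$ I would invoke the standard objectwise criterion for adjunctions between $\CQ$-categories: a $\CQ$-functor $G:\CA\to\CB$ admits a left adjoint exactly when, for each object $b$ of $\CB$, there is an object $Fb$ of $\CA$ with $|Fb|=|b|$ and $\CA(Fb,a)=\CB(b,Ga)$ for all $a\in\ob\CA$. Applied to $G=\sY_{\CE}:\CE\to\PE$, this says precisely that $\sY_{\CE}$ has a left adjoint if, and only if, every presheaf $\varphi$ admits an object $\sup\varphi$ of extent $|\varphi|$ obeying (\ref{sup_Y_adjoint}); that is, totality is exactly the existence of all suprema. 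For $(\text{ii})\iff(\text{iii})$, Remark \ref{sup_tensor_colimit} identifies $\sup\varphi=\varphi\star 1_{\CE}$ as a weighted colimit, so $(\text{ii})\Rightarrow(\text{iii})$ is a special case; conversely, the identity $\varphi\star D\cong\sup D_!\varphi$ recorded just before the theorem (using $D_!\dv D^*$ from Lemma \ref{Fshriek_Fstar_adjoint}) shows that once all suprema exist every weighted colimit exists, yielding $(\text{iii})\Rightarrow(\text{ii})$.

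The implication $(\text{iii})\Rightarrow(\text{iv})$ is again a matter of recognizing tensors and conical colimits as particular suprema: the tensor $u\star X$ is the weighted colimit $u\star(\{|X|\}\to\CE)\cong\sup D_! u$, and a conical colimit is by definition the supremum of a presheaf $\bv_i\CE(-,Y_i)$, so both exist once all suprema do. The crux is $(\text{iv})\Rightarrow(\text{iii})$, where I would reconstruct an arbitrary presheaf as a conical join of tensored representables. Given $\varphi\in\PE$, tensoredness supplies the objects $\varphi_X\star X$, each of extent $|\varphi|$, and conical cocompleteness supplies their join $Y:=\bv_{X\in\ob\CE}(\varphi_X\star X)$ in the fibre $\CE_{|\varphi|}$. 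I claim $Y=\sup\varphi$. The verification amounts to chaining three hom-formulas: for every $Z\in\ob\CE$,
\begin{align*}
\CE(Y,Z)&=\bw_{X\in\ob\CE}\CE(\varphi_X\star X,Z)\\
&=\bw_{X\in\ob\CE}\big(\CE(X,Z)\lda\varphi_X\big)=\PE(\varphi,\sY Z),
\end{align*}
where the middle step is the tensor identity (\ref{f_tensor}) and the last step is the defining property (\ref{phi_sup})--(\ref{sup_Y_adjoint}) of a supremum.

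The step requiring the most care, and the one I would treat as the genuine obstacle, is the first equality above: that the conical colimit satisfies $\CE(\bv_i Y_i,Z)=\bw_i\CE(Y_i,Z)$ for \emph{all} $Z$, not merely for $Z$ ranging in the fibre. To establish it I would expand the supremum of $\bv_i\CE(-,Y_i)$ via (\ref{phi_sup}), use that $w\lda(-)$ converts joins in its lower argument into meets (a direct consequence of composition preserving joins, hence of the very adjunction defining $\lda$), and then recognize the inner meet $\bw_{W}\big(\CE(W,Z)\lda\CE(W,Y_i)\big)$ as $\CE(Y_i,Z)$ through the Yoneda identity $\sup\CE(-,Y_i)=Y_i$. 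Once this conical-colimit formula is in hand, the decomposition above closes the loop and completes the cycle of equivalences.
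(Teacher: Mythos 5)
Your proposal is correct and is essentially the paper's own proof: the equivalences of (i), (ii) and (iii) rest on the same ingredients (Remark \ref{sup_tensor_colimit}, Lemma \ref{Fshriek_Fstar_adjoint}, and the identity $\varphi\star D\cong\sup D_!\varphi$), merely arranged as a hub around (iii) rather than as the paper's cycle of implications. In particular, your treatment of the crucial implication (iv)$\Rightarrow$(iii) --- forming the tensors $\varphi_X\star X$, taking their conical colimit, and verifying the supremum property by turning joins into meets under $\lda$, interchanging meets, and applying the Yoneda identity --- coincides step for step with the paper's calculation.
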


\begin{proof}
(i)${}\Lra{}$(ii): See above.

(ii)${}\Lra{}$(iii) \& (iv): By Remark \ref{sup_tensor_colimit}.

(iii)${}\Lra{}$(i): By definition of totality.

(iv)${}\Lra{}$(iii): Given $\varphi\in\ob\PE$, since $\CE$ is tensored, for every $X\in\ob\CE$ one has $Y_X=\varphi_X\star X$ with $|Y_X|=|\varphi|$ and, since $\CE$ is conically cocomplete, there is $Y=\sup\psi$ with
$$\psi=\bv_{X\in\ob\CE}\CE(-,Y_X).$$
Now, for all $Z\in\ob\CE$, the following calculation is easily validated:
\begin{align*}
\CE(Y,Z)&=\bw_{W\in\ob\CE}\CE(W,Z)\lda\psi_W\\
&=\bw_{W\in\ob\CE}\bw_{X\in\ob\CE}\CE(W,Z)\lda\CE(W,Y_X)\\
&=\bw_{X\in\ob\CE}\bw_{W\in\ob\CE}\CE(W,Z)\lda\CE(W,Y_X)\\
&=\bw_{X\in\ob\CE}\CE(Y_X,Z)\\
&=\bw_{X\in\ob\CE}\CE(X,Z)\lda\varphi_X.
\end{align*}
Consequently, $Y=\sup\varphi$.
\end{proof}

\begin{cor}
For a concrete category $\CE$ over $\CB$, the following are equivalent:
\begin{itemize}
\item[\rm (i)] $\CE$ is topological over $\CB$;
\item[\rm (ii)] the $\QB$-category $\overline{\CE}$ is tensored and conically cocomplete;
\item[\rm (iii)] $\CE$ is cofibred over $\CB$, and $\overline{\CE}$ is conically cocomplete.
\end{itemize}
\end{cor}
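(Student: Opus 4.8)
The plan is to obtain the corollary purely by assembling theorems already established, since each of the three equivalences reduces to a prior result rather than requiring fresh computation. For (i)$\iff$(ii) I would chain together two biconditionals: Garner's Theorem \ref{topological_total}, which equates topologicity of $\CE$ with totality of the $\QB$-category $\overline{\CE}$, and the equivalence (i)$\iff$(iv) of Stubbe's Theorem \ref{total_equivalent}, which equates totality of any $\CQ$-category with being simultaneously tensored and conically cocomplete. Composing these two equivalences (with $\CQ=\QB$) delivers (i)$\iff$(ii) at once.

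Since conditions (ii) and (iii) share the clause that $\overline{\CE}$ be conically cocomplete, it then suffices to establish (ii)$\iff$(iii). For (iii)$\Rightarrow$(ii) I would simply invoke Corollary \ref{tensor_cofibred_conical}: cofibredness of $\CE$ together with conical cocompleteness of $\overline{\CE}$ already forces $\overline{\CE}$ to be tensored, so (ii) holds. For the converse (ii)$\Rightarrow$(iii) the plan is to argue that tensoredness is a strengthening of cofibredness. Specializing the tensor condition (\ref{Bf_tensor}) to a singleton $\Bf=\{f\}$ recovers precisely the defining equality $\overline{\CE}(Y,Z)=\overline{\CE}(X,Z)\lda\{f\}$ of the final lifting of the structured singleton-sink $f$; hence the tensor $\{f\}\star X$ serves as the cofibred lifting $f\star X$, and $\CE$ is cofibred. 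Conical cocompleteness then carries over unchanged, yielding (iii).

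Putting (i)$\iff$(ii) together with (ii)$\iff$(iii) closes the cycle and gives the equivalence of all three conditions. I do not anticipate a genuine obstacle; the only step requiring any care is the reduction of tensoredness to cofibredness on singletons, and that is settled immediately by comparing the two defining identities. The remaining work is purely a matter of citing Theorems \ref{topological_total} and \ref{total_equivalent} and Corollary \ref{tensor_cofibred_conical} in the correct order.
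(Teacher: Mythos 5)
Your proposal is correct and follows essentially the same route as the paper: (i)$\iff$(ii) by chaining Theorem \ref{topological_total} with the equivalence (i)$\iff$(iv) of Theorem \ref{total_equivalent}, and (ii)$\iff$(iii) via Corollary \ref{tensor_cofibred_conical}. Your explicit singleton argument that tensoredness implies cofibredness is precisely the observation the paper makes (via Proposition \ref{tensor_cofibration} and the remark preceding it) and leaves implicit in its one-line proof, so there is no gap.
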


\begin{proof}
(i)$\iff$(ii) follows from Theorems \ref{topological_total} and \ref{total_equivalent}, and (ii)$\iff$(iii) follows from Corollary \ref{tensor_cofibred_conical}.
\end{proof}

\section{Dualization} \label{dualization}

Let us now show how the self-duality of topologicity (as stated in Theorem \ref{topological_fibre_complete}) plays itself out for general $\CQ$-categories. First of all, for any quantaloid $\CQ$, dualized as an ordinary category, $\CQ^{\op}$ is a quantaloid again, with $\CQ^{\op}(T,S)=\CQ(S,T)$ carrying the same order. Every $\CQ$-category $\CE$ induces the $\CQ^{\op}$-category $\CE^{\op}$ with $\CE^{\op}(Y,X)=\CE(X,Y)$, and a $\CQ$-functor $F:\CE\to\CD$ becomes a $\CQ^{\op}$-functor $F^{\op}:\CE^{\op}\to\CD^{\op}$. But when $F\leq F'$ for $F':\CE\to\CD$, one has $(F')^{\op}\leq F^{\op}$. Briefly, there is a 2-isomorphism
$$(-)^{\op}:(\QCAT)^{\co}\to\CQ^{\op}\text{-}\CAT.$$
One can now dualize the constructions and notions encountered thus far, as follows:
\begin{itemize}
\item $\PdE:=(\sP(\CE^{\op}))^{\op}$ (the \emph{covariant presheaf category} of $\CE$, as opposed to the \emph{contravariant} presheaf category $\PE$);
\item $\inf_{\CE}\varphi:=\sup_{\CE^{\op}}\varphi$ (the \emph{infimum} of $\varphi\in\PdE$);
\item $\varphi\rat D:=\varphi\star D^{\op}$ (the \emph{weighted limit of $D:\CJ\to\CE$ by $\varphi\in\PdJ$});
\item $\sYd_{\CE}:=(\sY_{\CE^{\op}})^{\op}: \CE\to\PdE$, $X\mapsto\CE(X,-)$ (the \emph{dual Yoneda} $\CQ$-functor);
\item $\CE$ \emph{cototal} $:\iff$ $\CE^{\op}$ total $\iff$ $\sYd_{\CE}$ has a right adjoint;
\item $\CE$ \emph{totally complete} $:\iff$ $\CE^{\op}$ totally cocomplete $\iff$ $\CE$ has all weighted limits;
\item $\CE$ \emph{cotensored} $:\iff$ $\CE^{\op}$ tensored;
\item $\CE$ \emph{conically complete} $:\iff$ $\CE^{\op}$ conically cocomplete.
\end{itemize}

For our next steps, it is convenient to have a $\CQ$-version of the \emph{Adjoint Functor Theorem} at our disposal, as follows:

\begin{prop} \label{left_adjoint_tensor}
Let $\CE$ be tensored and order-complete. Then a $\CQ$-functor $F:\CE\to\CD$ has a right adjoint if, and only if, $F$ preserves tensors and the restrictions $F_T:\CE_T\to\CD_T$ $(T\in\ob\CQ)$ of $F$ to the fibres preserve arbitrary joins.
\end{prop}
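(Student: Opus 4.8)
The plan is to prove both directions of the equivalence, where the "only if" direction is essentially formal and the "if" direction is the substantive one requiring construction of the right adjoint. For the forward direction, suppose $F$ has a right adjoint $G$, so $F_{\nat}\dv F^{\nat}$ can be compared with the adjunction $F\dv G$. Right adjoints preserve limits, and since tensors are themselves weighted colimits (Remark \ref{sup_tensor_colimit}) that left adjoints preserve, I would argue that $F$ preserves tensors directly from the fact that left adjoints preserve all weighted colimits. For the fibrewise join-preservation: a join in $\CE_T$ is a conical supremum, which is again a colimit, so $F$ (as a left adjoint) preserves it, and since $F$ is a $\CQ$-functor sending extent $T$ to extent $T$, the restriction $F_T$ must preserve that join in $\CD_T$.

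For the converse---the hard direction---I would construct the right adjoint pointwise using the tensored, order-complete structure of $\CE$. The idea is the standard Adjoint Functor Theorem recipe: for each object $W$ in $\CD$, I want to define $GW$ as a suitable "largest" object of $\CE$ mapping appropriately into $W$. Concretely, for $W\in\ob\CD$ with $|W|=T$, I would set
\begin{equation} \tag{$\natural$}
GW=\bv\{u\star X\mid X\in\ob\CE,\ u:|X|\to T\ \text{in}\ \CQ,\ u\leq\CD(FX,W)\},
\end{equation}
where the tensors $u\star X$ exist because $\CE$ is tensored, all lie in the fibre $\CE_T$, and the join exists because $\CE$ is order-complete. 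The assignment $X\mapsto GX$ must then be promoted to a $\CQ$-functor, and one must verify the adjunction identity $\CD(FX,W)=\CE(X,GW)$ for all $X\in\ob\CE$, $W\in\ob\CD$. This is where the two hypotheses on $F$ enter decisively.

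The verification of $\CD(FX,W)=\CE(X,GW)$ splits into the two inequalities. The inequality $\CD(FX,W)\leq\CE(X,GW)$ should follow from the defining universal property of the tensor: taking $u=\CD(FX,W)$ itself, the object $u\star X$ is one of the terms in the join defining $GW$, whence $u\star X\leq GW$ in $\CE_T$, and unwinding the tensor adjointness $(\dag'')$ gives the desired bound. The reverse inequality $\CE(X,GW)\leq\CD(FX,W)$ is the main obstacle: here I would apply $F$ to the defining join $(\natural)$, and this is precisely the step where I need $F$ to preserve tensors (to compute $F(u\star X)=u\star FX$) and where I need $F_T$ to preserve the join in the fibre (to distribute $F$ across the supremum). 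After pushing $F$ through, each term $u\star FX$ with $u\leq\CD(FX,W)$ satisfies $u\star FX\leq W$ in $\CD_T$ by the tensor universal property in $\CD$, so the join $F(GW)\leq W$, and translating this fibre inequality back through the $\CQ$-enriched hom yields $\CE(X,GW)=\CD(FX,F(GW))\leq\CD(FX,W)$ (using that $F$ is a $\CQ$-functor for the equality and $F(GW)\leq W$ for the inequality). The delicate point throughout is correctly interchanging the $\CQ$-valued hom-arrows with the order-theoretic joins in the fibres, which is exactly what the two preservation hypotheses are engineered to permit.
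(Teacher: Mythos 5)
Your converse (the substantive direction) is essentially the paper's own proof. The paper also constructs $G$ fibrewise: it takes $G_T$ to be the order-theoretic right adjoint of $F_T$ (which exists because $\CE_T$ is large-complete and $F_T$ preserves joins) and puts $GY=G_{|Y|}Y$; your explicit formula $GW=\bv\{u\star X\mid X\in\ob\CE,\ u\leq\CD(FX,W)\}$ is just a concrete description of that same object, and your two-inequality verification --- tensor preservation to rewrite $F(u\star X)$ as $u\star FX$, fibre-join preservation to get $F(GW)\leq W$, and the tensor adjointness $(\dag'')$ for the lower bound --- is sound. One cosmetic slip: you write $\CE(X,GW)=\CD(FX,F(GW))$ ``using that $F$ is a $\CQ$-functor for the equality''; $\CQ$-functoriality only gives the inequality $\CE(X,GW)\leq\CD(FX,F(GW))$, but that is all your argument needs.

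The genuine gap is in your ``only if'' direction. You justify fibrewise join preservation by asserting that a join in $\CE_T$ \emph{is} a conical supremum, hence a colimit preserved by the left adjoint $F$. That premise is false in general, and the paper goes out of its way to make exactly this point: order-completeness (existence of fibre joins) is strictly weaker than conical cocompleteness (existence of suprema of conical presheaves). Example \ref{tensor_not_cocomplete} exhibits a tensored, order-complete $\QB$-category in which the fibre join $\infty=\bv_{X<\infty}X$ exists but is \emph{not} the supremum of the conical presheaf $\bv_{X<\infty}\overline{\CE}(-,X)$; a fibre join need not underlie any weighted colimit, so ``left adjoints preserve colimits'' cannot be invoked for it. The correct (and the paper's) argument is more direct: an enriched adjunction $F\dv G$ restricts to order-adjunctions $F_T\dv G_T$ on the fibres, since the unit and counit inequalities restrict to objects of fixed extent, and a left adjoint monotone map between ordered classes preserves whatever joins exist. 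This repairs your forward direction in one line, but as written that step would fail.
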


\begin{proof}
If $F$ has a right adjoint $G$, then $F$ preserves all existing weighted colimits, in particular tensors, and since $F\dv G$ implies $F_T\dv G_T$ for all $T\in\ob\CQ$, $F$ preserves also all joins in the fibres. Conversely, assuming preservation of tensors and joins, one first observes that every $F_T$ must have a right adjoint $G_T$ since $\CE$ is order-complete. Putting
$$GY=G_{|Y|}Y,$$
for all $X\in\ob\CE$, $Y\in\ob\CD$ one trivially has
$$\CE(X,GY)\leq\CD(FX,FGY)\leq\CD(FX,Y),$$
and from
\begin{align*}
1_{|Y|}&\leq\CE(X,GY)\lda\CE(X,GY)\\
&=\CE(\CE(X,GY)\star X,GY)&(\CE\ \text{tensored})\\
&=\CD(F(\CE(X,GY)\star X),Y)&(F_{|Y|}\dv G_{|Y|})\\
&=\CD(\CE(X,GY)\star FX,Y)&(F\ \text{preserves tensors})\\
&=\CD(FX,Y)\lda\CE(X,GY)
\end{align*}
one obtains $\CD(FX,Y)\leq\CE(X,GY)$. Hence, $F\dv G$.
\end{proof}

One can now extend the list of equivalent statements of Theorem \ref{total_equivalent} by its dualizations, as follows:

\begin{thm}[Stubbe \cite{Stubbe2005}] \label{total_cototal_equivalent}
A $\CQ$-category $\CE$ is total if and only if the following equivalent conditions hold:
\begin{itemize}
\item[\rm (v)] $\CE$ is cototal;
\item[\rm (vi)] $\CE$ is totally complete;
\item[\rm (vii)] $\CE$ has all infima;
\item[\rm (viii)] $\CE$ is cotensored and conically complete.
\end{itemize}
\end{thm}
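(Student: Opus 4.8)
The conditions (v)--(viii) are nothing but the conditions (i)--(iv) of Theorem \ref{total_equivalent} read off for the $\CQ^{\op}$-category $\CE^{\op}$ through the dualization dictionary set up above; hence their mutual equivalence is immediate from Theorem \ref{total_equivalent} applied to $\CE^{\op}$. The only substantial assertion is therefore that totality of $\CE$ is equivalent to cototality, i.e.\ to $\CE^{\op}$ being total. Since the entire framework is invariant under the involution $(-)^{\op}$ (with $\CQ\mapsto\CQ^{\op}$), it suffices to establish a single implication, say that $\CE$ total implies $\CE$ cototal; applying this to $\CE^{\op}$ over $\CQ^{\op}$ then yields the reverse implication.

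To prove total $\Lra$ cototal, the plan is to invoke the Adjoint Functor Theorem in the form of Proposition \ref{left_adjoint_tensor}. By Theorem \ref{total_equivalent} a total $\CE$ is tensored and conically cocomplete, and conical cocompleteness forces each fibre $\CE_T$ to carry all joins, so $\CE$ is in particular order-complete. These are exactly the standing hypotheses of Proposition \ref{left_adjoint_tensor}, which I would apply to the dual Yoneda $\CQ$-functor $\sYd_{\CE}\colon\CE\to\PdE$. Recalling that $\CE$ is cototal precisely when $\sYd_{\CE}$ admits a right adjoint, the theorem reduces to verifying that $\sYd_{\CE}$ preserves tensors and that each restriction $(\sYd_{\CE})_T\colon\CE_T\to(\PdE)_T$ preserves arbitrary joins.

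Both verifications rest on the single principle that the hom-functor $\CE(-,Z)$ converts suprema in its first variable into infima. Concretely, for a join $Y=\bv_{i}Y_i$ in a fibre $\CE_T$ one reads off from the finality equation (\ref{phi_sup}) that $\CE(\bv_i Y_i,-)=\bw_i\CE(Y_i,-)$ componentwise. The point is that $\PdE=(\sP(\CE^{\op}))^{\op}$ carries the \emph{reverse} of the componentwise order, so that a $\PdE$-join is computed as a componentwise meet; hence the identity just displayed says precisely that $\sYd_{\CE}(\bv_i Y_i)=\bv_i\sYd_{\CE}(Y_i)$ in $(\PdE)_T$. An entirely analogous computation, now using the defining equation (\ref{f_tensor}) of the tensor $u\star X$ together with the description of tensors in $\PdE$, shows that $\sYd_{\CE}$ preserves tensors; in short, the dual Yoneda embedding sends weighted colimits of $\CE$ to weighted colimits of $\PdE$, because representables turn them into the corresponding limits, which the order reversal of $\PdE$ converts back into colimits. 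With both hypotheses of Proposition \ref{left_adjoint_tensor} in hand, $\sYd_{\CE}$ has a right adjoint and $\CE$ is cototal.

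The main obstacle is exactly this variance bookkeeping: one must keep careful track of the two opposites hidden in $\PdE=(\sP(\CE^{\op}))^{\op}$ in order to be sure that the componentwise meets produced by the representables $\CE(-,Z)$ really are the joins (and, for tensors, the tensors) of the doubly-dualized $\CQ$-category $\PdE$. Once the bookkeeping is pinned down the two preservation properties are routine, and the mutual equivalence of (v)--(viii) together with the converse implication follows formally as indicated.
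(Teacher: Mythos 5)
Your proposal is correct, and its skeleton coincides with the paper's: the mutual equivalence of (v)--(viii) is Theorem \ref{total_equivalent} read for the $\CQ^{\op}$-category $\CE^{\op}$, duality under $(-)^{\op}$ reduces everything to the single implication ``total $\Lra$ cototal'', and that implication is obtained by applying Proposition \ref{left_adjoint_tensor} to the dual Yoneda functor $\sYd_{\CE}:\CE\to\PdE$. Where you genuinely diverge is in how the hypotheses of that proposition are discharged. The paper proves the stronger statement that $\sYd_{\CE}$ preserves \emph{all} weighted colimits, $\sYd(\varphi\star D)=\varphi\star\sYd D$, by a calculation in the quantaloid $\CQ\text{-}\DIS$ resting on the extension/lifting formulas (a), the exchange law (b) $(\Psi\circ F_{\nat})\lda\Phi=\Psi\lda(\Phi\circ F^{\nat})$, and full faithfulness of Yoneda (c); preservation of tensors and fibre joins is then a special case via Remark \ref{sup_tensor_colimit}. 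You instead verify only the two properties Proposition \ref{left_adjoint_tensor} literally requires, by componentwise computation in $\PdE$: since the order of $\PdE=(\sP(\CE^{\op}))^{\op}$ is the \emph{reverse} of the componentwise order, fibre joins in $\PdE$ are componentwise meets; in a total (hence conically cocomplete) $\CE$ a fibre join is a conical supremum, so (\ref{phi_sup}) yields $\CE(\bv_{i}Y_i,Z)=\bw_{i}\CE(Y_i,Z)$, which says exactly that $\sYd$ preserves fibre joins; and the tensor in $\PdE$, namely $(u\star\psi)_Z=\psi_Z\lda u$, matches $\CE(u\star X,Z)=\CE(X,Z)\lda u$ from (\ref{f_tensor}), so $\sYd$ preserves tensors. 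Both verifications are sound, and you place the crucial hypothesis correctly: the componentwise formula for fibre joins genuinely needs conical cocompleteness rather than mere order-completeness (the paper's Example \ref{tensor_not_cocomplete} shows it can fail otherwise). The trade-off: your route is more elementary and keeps the variance bookkeeping of the double dualization explicit, avoiding the distributor calculus altogether; the paper's route establishes the stronger and reusable fact that $\sYd$ is cocontinuous, and its auxiliary identity (b) is cited again later (in the functoriality argument for $\sI$), so its extra machinery pays off elsewhere. In a full write-up you should state and prove the one fact you only allude to, the description of tensors in $\PdE$; it is the one-line verification that $(u\star\psi)_Z=\psi_Z\lda u$ satisfies $\PdE(u\star\psi,\xi)=\PdE(\psi,\xi)\lda u$.
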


\begin{proof}
It suffices to prove that $\CE$ is cototal when $\CE$ is total, and thanks to Proposition \ref{left_adjoint_tensor} and Remark \ref{sup_tensor_colimit}, for that it suffices to prove that the dual Yoneda $\CQ$-functor $\sYd:\CE\to\PdE$ preserves all weighted colimits. Here is a quick sketch of that fact. First note:
\begin{itemize}
\item[\rm (a)] In the (meta)quantaloid $\CQ\text{-}\DIS$, for $\Phi:\CE\oto\CD$, $\Psi:\CD\oto\CC$, $\Xi:\CE\to\CC$ one has
\begin{align*}
(\Xi\lda\Phi)(Y,Z)&=\bw_{X\in\ob\CE}\Xi(X,Z)\lda\Phi(X,Y),\\
(\Psi\rda\Xi)(X,Y)&=\bw_{Z\in\ob\CC}\Psi(Y,Z)\rda\Xi(X,Z)
\end{align*}
for all $X\in\ob\CE$, $Y\in\ob\CD$, $Z\in\ob\CC$.
\item[\rm (b)] For $\Phi:\CE\oto\CC$, $\Psi:\CD\oto\CB$ and a $\CQ$-functor $F:\CE\to\CD$ one has
$$(\Psi\circ F_{\nat})\lda\Phi=\Psi\lda(\Phi\circ F^{\nat}).$$
\item[\rm (c)] $\sY=\sY_{\CE}$ (and, hence, $\sYd_{\CE}$) is fully faithful, that is: $\sY^{\nat}\circ\sY_{\nat}=\CE$.
\end{itemize}

Now consider the weighted colimits $\varphi\star D$ of $D:\CJ\to\CE$ by $\varphi\in\ob\PJ$, then
\begin{align*}
\sYd(\varphi\star D)&=\CE(\varphi\star D,-)&(\text{definition of }\sYd)\\
&=D_{\nat}\lda\varphi&(\text{definition of weighted colimit, (a)})\\
&=((\sYd)^{\nat}\circ\sYd_{\nat}\circ D_{\nat})\lda\varphi&(\text{by (c)})\\
&=((\sYd)^{\nat}\circ(\sYd D)_{\nat})\lda\varphi&(\text{functoriality of }(-)_{\nat})\\
&=(\sYd)^{\nat}\lda(\varphi\circ(\sYd D)^{\nat})&(\text{by (b)})\\
&=\varphi\star\sYd D;
\end{align*}
here the last step follows from the fact, that for any $\CQ$-functor $F:\CJ\to\PdE$ (in lieu of $\sYd D$), the weighted colimit of $F$ by $\varphi$ may be computed as
$$\varphi\star F={\sup}_{\PdE}F_!\varphi=(\sYd)^{\nat}\lda F_!\varphi=(\sYd)^{\nat}\lda(\varphi\circ F^{\nat}).$$
\end{proof}

\begin{cor} \label{topological_dual}
For a concrete category $\CE$ over $\CB$, the following assertions are equivalent:
\begin{itemize}
\item[\rm (i)] $\CE$ is topological over $\CB$;
\item[\rm (ii)] $\CE^{\op}$ is topological over $\CB^{\op}$;
\item[\rm (iii)] $\CE$ is fibred and cofibred over $\CB$ with large-complete fibres.
\end{itemize}
\end{cor}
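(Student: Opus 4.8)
The plan is to route everything through Garner's Theorem \ref{topological_total} together with Stubbe's Theorems \ref{total_equivalent} and \ref{total_cototal_equivalent}, so that the only genuinely new work lies in (a) identifying the free-quantaloid-enriched category attached to $\CE^{\op}$ over $\CB^{\op}$, and (b) matching the enriched (co)tensored and conical (co)completeness conditions with the classical fibred/cofibred/large-complete ones.

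First I would settle (a), which underpins the equivalence (i)$\iff$(ii). The free-quantaloid construction commutes with dualization: since $\CQ_{\CB^{\op}}(T,S)=\{\Bf\mid\Bf\subseteq\CB^{\op}(T,S)\}=\{\Bf\mid\Bf\subseteq\CB(S,T)\}=\QB(S,T)=\QB^{\op}(T,S)$, one has $\CQ_{\CB^{\op}}=\QB^{\op}$. Under this identification the $\CQ_{\CB^{\op}}$-category $\overline{\CE^{\op}}$ attached to $\CE^{\op}$ over $\CB^{\op}$ is precisely $(\overline{\CE})^{\op}$, because $\overline{\CE^{\op}}(Y,X)=\CE^{\op}(Y,X)=\CE(X,Y)=\overline{\CE}(X,Y)$. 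Hence, by Theorem \ref{topological_total} applied to $\CE^{\op}$ over $\CB^{\op}$, statement (ii) is equivalent to totality of $(\overline{\CE})^{\op}$, which is by definition cototality of $\overline{\CE}$. The equivalence (i)$\iff$(ii) then follows at once from the self-duality total $\iff$ cototal of Theorem \ref{total_cototal_equivalent}.

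For (i)$\Rightarrow$(iii) I would simply unwind the characterizations. By Theorem \ref{topological_total} topologicity of $\CE$ means totality of $\overline{\CE}$; Theorem \ref{total_equivalent} then gives that $\overline{\CE}$ is tensored and conically cocomplete, while Theorem \ref{total_cototal_equivalent} gives that it is also cotensored and conically complete. Being tensored forces final liftings of all $\QB$-arrows $\Bf$, in particular of the singletons $\{f\}$, so $\CE$ is cofibred; dually, being cotensored makes $\CE$ fibred. Conical cocompleteness yields all joins in every fibre $\CE_T$, hence, by the self-dual reformulation of order-completeness, large-completeness of the fibres. This delivers (iii).

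For the converse (iii)$\Rightarrow$(i) I would chain the earlier results of this and the preceding section. Large-completeness of the fibres is exactly order-completeness of $\overline{\CE}$; combined with fibredness, Proposition \ref{fibre_order_complete} makes $\overline{\CE}$ conically cocomplete; then cofibredness together with conical cocompleteness makes $\overline{\CE}$ tensored by Corollary \ref{tensor_cofibred_conical}; finally Theorem \ref{total_equivalent}, in the form (iv)$\Rightarrow$(i), shows that $\overline{\CE}$ is total, i.e. that $\CE$ is topological by Theorem \ref{topological_total}. The step I expect to require the most care is precisely this passage from order-completeness to conical cocompleteness: as Examples \ref{cofibred_not_tensored_exmp} and \ref{tensor_not_cocomplete} warn, plain completeness of the fibres is \emph{not} sufficient, and it is exactly fibredness that must be invoked (via Proposition \ref{fibre_order_complete}) to guarantee that the fibrewise joins actually compute the required suprema of joins of representable presheaves.
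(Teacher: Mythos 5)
Your proposal is correct and follows essentially the same route as the paper's own proof: the identifications $\CQ_{\CB^{\op}}=(\CQ_{\CB})^{\op}$ and $\overline{\CE^{\op}}=\overline{\CE}^{\op}$ combined with Theorems \ref{topological_total} and \ref{total_cototal_equivalent} for (i)$\iff$(ii) and for extracting (iii), and Corollary \ref{tensor_cofibred_conical} together with Proposition \ref{fibre_order_complete} for (iii)$\Rightarrow$(i). Your write-up merely spells out the intermediate steps (tensored $\Rightarrow$ cofibred, conical cocompleteness $\Rightarrow$ fibrewise joins) that the paper leaves implicit.
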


\begin{proof}
Since $\CQ_{\CB^{\op}}=(\CQ_{\CB})^{\op}$ and $\overline{\CE^{\op}}=\overline{\CE}^{\op}$, the equivalence of (i) and (ii) follows from Theorems \ref{topological_total} and \ref{total_cototal_equivalent}, which also imply (i)$\&$(ii)${}\Lra{}$(iii). The converse implication follows with Corollary \ref{tensor_cofibred_conical} and Proposition \ref{fibre_order_complete}.
\end{proof}

\section{Universality of the presheaf construction} \label{universality}

First, let us briefly recall the \emph{Yoneda Lemma} for $\CQ$-categories:

\begin{lem} \label{Yoneda_lemma}
For a $\CQ$-category $\CE$ and all $X\in\ob\CE$, $\varphi\in\ob\PE$, one has
$$\PE(\sY_{\CE}X,\varphi)=\varphi_X.$$
\end{lem}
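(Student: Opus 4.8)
The plan is to evaluate $\PE(\sY_{\CE}X,\varphi)$ straight from the definition of the hom-arrows of the presheaf $\CQ$-category and then to pin down the resulting expression by two opposing inequalities in the complete lattice $\CQ(|X|,|\varphi|)$. First I would unfold the left-hand side: since $\sY_{\CE}X=\CE(-,X)$, its $W$-component is $(\sY_{\CE}X)_W=\CE(W,X)$, so the formula $\PE(\alpha,\beta)=\bw_{W}\beta_W\lda\alpha_W$ specializes to
$$\PE(\sY_{\CE}X,\varphi)=\bw_{W\in\ob\CE}\big(\varphi_W\lda\CE(W,X)\big),$$
an arrow $|X|\to|\varphi|$ in $\CQ$. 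The task then reduces to showing that this meet equals $\varphi_X$.

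For the inequality $\varphi_X\leq\PE(\sY_{\CE}X,\varphi)$ I would argue that $\varphi_X\leq\varphi_W\lda\CE(W,X)$ holds for each fixed $W\in\ob\CE$, and then take the meet over $W$. By the defining adjunction $v\leq w\lda u\iff v\circ u\leq w$ recorded earlier in the excerpt, the inequality $\varphi_X\leq\varphi_W\lda\CE(W,X)$ is equivalent to $\varphi_X\circ\CE(W,X)\leq\varphi_W$, which is exactly the presheaf condition on $\varphi$ read off at the pair $(W,X)$. So this direction is immediate once the presheaf axiom is invoked in the correct orientation.

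For the reverse inequality $\PE(\sY_{\CE}X,\varphi)\leq\varphi_X$ I would use that the meet lies below any single term, in particular below the $W=X$ term $\varphi_X\lda\CE(X,X)$; it then suffices to check $\varphi_X\lda\CE(X,X)\leq\varphi_X$. This follows from the $\CQ$-category unit law $1_{|X|}\leq\CE(X,X)$, the antitonicity of $\lda$ in its second argument (immediate from $w\lda u=\bv\{v\mid v\circ u\leq w\}$), and the identity $w\lda 1_{|X|}=w$, giving $\varphi_X\lda\CE(X,X)\leq\varphi_X\lda 1_{|X|}=\varphi_X$. Combining the two bounds yields the asserted equality. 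There is no genuine obstacle here—this is the standard enriched Yoneda lemma—and the only points demanding care are the correct orientation of the presheaf inequality in the first direction and the monotonicity behaviour of $\lda$ in the second.
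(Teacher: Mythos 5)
Your proof is correct and complete: the unfolding of $\PE(\sY_{\CE}X,\varphi)$ as $\bw_{W}\varphi_W\lda\CE(W,X)$, the use of the presheaf inequality $\varphi_X\circ\CE(W,X)\leq\varphi_W$ via the adjunction $v\leq w\lda u\iff v\circ u\leq w$ for one bound, and the specialization to $W=X$ together with $1_{|X|}\leq\CE(X,X)$ and $w\lda 1_{|X|}=w$ for the other, are exactly the standard argument. The paper itself states this lemma without proof (it is merely recalled as well known), and your argument is precisely the one it implicitly relies on.
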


As a consequence one obtains the following fundamental adjunction:

\begin{prop} \label{composition_graph_adjunction}
For $\CQ$-categories $\CE$, $\CC$, there is a natural 1-1 correspondence
$$\bfig
\morphism<400,0>[\CE`\CC;\Psi] \place(200,0)[\circ]
\morphism(-150,-100)/-/<700,0>[`;]
\morphism(0,-200)<400,0>[\CC`\PE;G]
\place(1000,-100)[(G^{\nat}\circ(\sY_{\CE})_{\nat}=\Psi),]
\efig$$
which respects the order of $\CQ$-functors and $\CQ$-distributors.
\end{prop}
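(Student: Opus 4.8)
The plan is to make the correspondence fully explicit by computing the composite distributor $G^{\nat}\circ(\sY_{\CE})_{\nat}$ with the help of the Yoneda Lemma, and then to read off the bijection componentwise. First I would unwind the two distributors: for a $\CQ$-functor $G:\CC\to\PE$ one has $G^{\nat}(\varphi,Z)=\PE(\varphi,GZ)$, while $(\sY_{\CE})_{\nat}(X,\varphi)=\PE(\sY_{\CE}X,\varphi)=\varphi_X$ by Lemma \ref{Yoneda_lemma}. The composition formula in $\CQ\text{-}\DIS$ then gives
$$\big(G^{\nat}\circ(\sY_{\CE})_{\nat}\big)(X,Z)=\bv_{\varphi\in\ob\PE}\PE(\varphi,GZ)\circ\varphi_X.$$
The decisive point is that this supremum equals $(GZ)_X$: taking $\varphi=\sY_{\CE}X$ produces the term $(GZ)_X\circ\CE(X,X)=(GZ)_X$, so the supremum is at least $(GZ)_X$; conversely, $\PE(\varphi,GZ)\leq(GZ)_X\lda\varphi_X$ forces $\PE(\varphi,GZ)\circ\varphi_X\leq\big((GZ)_X\lda\varphi_X\big)\circ\varphi_X\leq(GZ)_X$ for every $\varphi$. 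Hence the forward assignment sends $G$ to the distributor $\Psi$ with $\Psi(X,Z)=(GZ)_X$.

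Next I would produce the inverse by sending a $\CQ$-distributor $\Psi:\CE\oto\CC$ to the assignment $Z\mapsto\Psi(-,Z)$, and check that it lands in $\QCAT(\CC,\PE)$. Here the single defining inequality $\CC(Z,Z')\circ\Psi(X,Z)\circ\CE(X',X)\leq\Psi(X',Z')$ does everything: specializing $Z'=Z$ and using $1_{|Z|}\leq\CC(Z,Z)$ shows each $\Psi(-,Z)$ is a presheaf of extent $|Z|$, while specializing $X'=X$ and using $1_{|X|}\leq\CE(X,X)$ gives $\CC(Z,Z')\circ\Psi(X,Z)\leq\Psi(X,Z')$, i.e. $\CC(Z,Z')\leq\PE(\Psi(-,Z),\Psi(-,Z'))$, which is the required $\CQ$-functoriality. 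Since the datum $(GZ)_X$ and the datum $\Psi(X,Z)$ are literally identified, the two assignments are visibly mutually inverse.

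Finally, both orders are handled by the componentwise description. The order of presheaves is componentwise, so $G\leq G'\iff\forall Z,X:\ (GZ)_X\leq(G'Z)_X$, and this is precisely the pointwise order $\Psi\leq\Psi'$ that $\CQ$-distributors inherit from $\CQ$; naturality in $\CE$ and $\CC$ then follows by routine compatibility checks with the relevant composites. I do not expect any genuine obstacle: the only nonformal step is the supremum computation $\bv_{\varphi}\PE(\varphi,GZ)\circ\varphi_X=(GZ)_X$, a density-of-representables statement settled by the two-sided estimate above, and the remaining verifications are direct consequences of the distributor inequality and the Yoneda Lemma.
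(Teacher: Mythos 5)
Your proof is correct and follows essentially the same route as the paper: the Yoneda Lemma forces $(GZ)_X=\Psi(X,Z)$, and conversely this formula defines a $\CQ$-functor, with the order correspondence read off componentwise. The only difference is one of explicitness: you spell out the collapse of the supremum $\bv_{\varphi}\PE(\varphi,GZ)\circ\varphi_X=(GZ)_X$ and the functoriality check for $Z\mapsto\Psi(-,Z)$, both of which the paper's proof leaves implicit.
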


\begin{proof}
Given $\Psi$, a $\CQ$-functor $G$ with $G^{\nat}\circ(\sY_{\CE})_{\nat}=\Psi$ must necessarily satisfy
$$(GZ)_X=\PE(\sY_{\CE}X,GZ)=\Psi(X,Z)$$
for all $Z\in\ob\CC$, $X\in\ob\CE$. Conversely, defining $G$ in this way one obtains a $\CQ$-functor.
\end{proof}

When restricting ourselves to the case of a small quantaloid $\CQ$ and to considering small $\CQ$-categories, we therefore obtain an adjunction
$$\bfig
\morphism|a|/@{<-}@<6pt>/<700,0>[(\QCat)^{\op}`\CQ\text{-}{\bf Dis}.;\sP]
\morphism|b|/@{->}@<-6pt>/<700,0>[(\QCat)^{\op}`\CQ\text{-}{\bf Dis}.;(-)^{\nat}]
\place(400,0)[\bot]
\efig$$
Here $(-)^{\nat}$ maps objects identically while the presheaf functor $\sP$ assigns to a $\CQ$-distributor $\Phi:\CE\oto\CD$ the $\CQ$-functor
$$\Phi^*:\PD\to\PE$$
with $(\Phi^*)^{\nat}\circ(\sY_{\CE})_{\nat}=(\sY_{\CD})_{\nat}\circ\Phi$, that is:
$$(\Phi^*\psi)_X=\bv_{Z\in\ob\CD}\psi_Z\circ\Phi(X,Z)$$
for all $\psi\in\ob\PD$, $X\in\ob\CE$. The unit of the adjunction at $\CE$ is $(\sY_{\CE})_{\nat}$ while $\sY_{\CE}$ is the counit (since $(\sY_{\CE})^{\nat}\circ(\sY_{\CE})_{\nat}=\CE$). Note that, for a $\CQ$-functor $F:\CE\to\CD$, one has
$$(F_{\nat})^*=F^*\quad\text{and}\quad(F^{\nat})^*=F_!$$
with $F^*$, $F_!$ defined as in Lemma \ref{Fshriek_Fstar_adjoint}. Consequently, there is a monad
$$(\sP,\sY,\sS)$$
on the 2-category $\QCat$, with $\sP$ mapping $F$ to $F_!$ and with the monad multiplication $\sS$ given by
$$\sS_{\CE}\Phi=\bv_{\varphi\in\ob\CE}\Phi_{\varphi}\circ\varphi_X$$
for all $\Phi\in\sP\PE$, $X\in\ob\CE$. It is straightforward to show that this monad is of \emph{Kock-Z{\"o}berlein type}, that is, that
$$(\sY_{\CE})_!=(\sY_{\CE}^{\nat})^*\leq\sY_{\PE}$$
for every $\CQ$-category $\CE$, and consequently
$$\sS_{\CE}\dv\sY_{\PE}:\PE\to\sP\PE.$$
In other words, every $\Phi\in\sP\PE$ has a supremum: $\sup\Phi=\sS_{\CE}\Phi$. This fact of course remains true also for large $\CQ$ and $\CE$; one just has to accept the fact that $\PE$ and $\sP\PE$ will generally live in higher universes than $\CE$.

\begin{cor} \label{Yoneda_fully_faithful}
Yoneda maps every $\CQ$-category $\CE$ fully and faithfully into the total $\CQ$-(meta)category $\PE$.
\end{cor}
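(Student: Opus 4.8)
The plan is to observe that both assertions packed into the corollary---that the Yoneda $\CQ$-functor $\sY_{\CE}$ is fully faithful, and that the codomain $\PE$ is itself total---are immediate consequences of material already assembled in this section, so that the statement genuinely earns the label ``corollary''. I would dispatch the two claims separately.

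First I would treat fully faithfulness. A $\CQ$-functor $F:\CE\to\CD$ is fully faithful precisely when the defining comparison $\CE(X,Y)\leq\CD(FX,FY)$ is in fact an equality for all $X,Y\in\ob\CE$. For the Yoneda $\CQ$-functor this asks that $\PE(\sY_{\CE}X,\sY_{\CE}Y)=\CE(X,Y)$. But this drops straight out of the Yoneda Lemma (Lemma \ref{Yoneda_lemma}) by taking $\varphi=\sY_{\CE}Y=\CE(-,Y)$: the lemma reads $\PE(\sY_{\CE}X,\varphi)=\varphi_X$, and evaluating the right-hand side gives $(\sY_{\CE}Y)_X=\CE(X,Y)$. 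No computation beyond this substitution is needed.

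Second I would address totality of $\PE$. By definition, $\PE$ is total exactly when its own Yoneda $\CQ$-functor $\sY_{\PE}:\PE\to\sP\PE$ admits a left adjoint. This is word-for-word the content of the adjunction $\sS_{\CE}\dv\sY_{\PE}$ recorded in the discussion immediately preceding the corollary, in which $\sS_{\CE}$ sends each $\Phi\in\ob\sP\PE$ to its supremum $\sup\Phi=\sS_{\CE}\Phi$. Thus the required left adjoint is furnished by the monad multiplication, and $\PE$ is total.

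The only substantive input standing behind this last adjunction is the verification that the presheaf monad $(\sP,\sY,\sS)$ is of Kock--Z{\"o}berlein type---equivalently, the inequality $(\sY_{\CE})_!\leq\sY_{\PE}$---from which $\sS_{\CE}\dv\sY_{\PE}$ follows by the formal theory of such monads. I would regard that verification as the genuine (if routine) labour underlying the corollary; but it has already been asserted above to hold for every $\CQ$-category $\CE$, so, granting it, the corollary follows with no further work.
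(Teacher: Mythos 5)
Your proposal is correct and follows exactly the route the paper intends: full faithfulness of $\sY_{\CE}$ is the immediate specialization of the Yoneda Lemma (Lemma \ref{Yoneda_lemma}) to $\varphi=\sY_{\CE}Y$, and totality of $\PE$ is precisely the adjunction $\sS_{\CE}\dv\sY_{\PE}$ furnished by the Kock--Z{\"o}berlein property of the monad $(\sP,\sY,\sS)$ asserted just before the corollary. Since the paper states this corollary without further proof as a summary of that discussion, your decomposition into these two ingredients is exactly the intended argument.
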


\begin{thm}
For all $\CQ$-categories $\CE$, $\CD$ with $\CD$ total, there is a natural 1-1 correspondence
$$\bfig
\morphism<400,0>[\CE`\CD;F]
\morphism(-200,-100)/-/<2050,0>[`;]
\morphism(0,-200)<1000,0>[\PE`\CD,\ H\ \text{preserves weighted colimits};H]
\place(2200,-100)[(H\sY_{\CE}=F)]
\efig$$
which respects the order of $\CQ$-functors.
\end{thm}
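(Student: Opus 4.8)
The plan is to realize this statement as the universal property of $\PE$ as the free cocompletion of $\CE$ under weighted colimits, with $\sY_{\CE}$ playing the role of the universal weighted-colimit-preserving functor. Given $F:\CE\to\CD$, I would define the candidate $H=H_F:\PE\to\CD$ as the composite $\sup_{\CD}\circ F_!$, where $F_!:\PE\to\PD$ is the left adjoint of $F^*$ provided by Lemma \ref{Fshriek_Fstar_adjoint}, and $\sup_{\CD}:\PD\to\CD$ is the left adjoint of the Yoneda $\CQ$-functor $\sY_{\CD}$ furnished by the totality of $\CD$ (equivalently, by Theorem \ref{total_equivalent}(iii), $\sup_{\CD}$ sends a presheaf to its supremum). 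On objects this gives $H_F\varphi=\sup_{\CD}F_!\varphi\cong\varphi\star F$, the weighted colimit of the diagram $F$ by the weight $\varphi$, which exists since $\CD$ is totally cocomplete; defining $H_F$ as the honest composite of the two chosen functors avoids any choice ambiguity and makes it a bona fide $\CQ$-functor. Being a composite of two left adjoints, $H_F\dv F^*\sY_{\CD}$ is itself a left adjoint, and therefore preserves all weighted colimits (left adjoints preserve colimits, as already used in the proof of Proposition \ref{left_adjoint_tensor}).

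Next I would check the defining equation $H_F\sY_{\CE}=F$, which rests on two Yoneda computations. The first is the naturality $F_!\sY_{\CE}=\sY_{\CD}F$: using the formula $F_!\varphi=\varphi\circ F^{\nat}$ from Lemma \ref{Fshriek_Fstar_adjoint}, one evaluates $(F_!\sY_{\CE}X)_Y=\bv_{W\in\ob\CE}\CE(W,X)\circ\CD(Y,FW)$ and confirms, via functoriality of $F$ for ``$\leq$'' and the choice $W=X$ for ``$\geq$'', that it equals $\CD(Y,FX)=(\sY_{\CD}FX)_Y$. The second is $\sup_{\CD}\sY_{\CD}\cong 1_{\CD}$: since $\sup_{\CD}\dv\sY_{\CD}$ and $\sY_{\CD}$ is fully faithful (Corollary \ref{Yoneda_fully_faithful}), the counit of this adjunction is invertible. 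Combining these, $H_F\sY_{\CE}=\sup_{\CD}F_!\sY_{\CE}=\sup_{\CD}\sY_{\CD}F\cong F$.

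For the reverse assignment I would send a weighted-colimit-preserving $H:\PE\to\CD$ to $F:=H\sY_{\CE}$, and the crux is to show that $H$ is then forced to equal $H_F$. The lynchpin is the density of Yoneda: for every $\varphi\in\ob\PE$ one has $\varphi\star\sY_{\CE}\cong\varphi$, i.e. each presheaf is the weighted colimit of the representables weighted by itself. This I would prove straight from the Yoneda Lemma \ref{Yoneda_lemma}: the defining equation of $\varphi\star\sY_{\CE}$ reads $\PE(\varphi\star\sY_{\CE},\psi)=\PE(\varphi,\PE(\sY_{\CE}-,\psi))$, and $\PE(\sY_{\CE}-,\psi)=\psi$ by Lemma \ref{Yoneda_lemma}, so the right-hand side is $\PE(\varphi,\psi)$ for all $\psi$. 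Given density, preservation of weighted colimits yields $H\varphi=H(\varphi\star\sY_{\CE})=\varphi\star(H\sY_{\CE})=\varphi\star F=H_F\varphi$, whence $H=H_F$; conversely $H_F\sY_{\CE}=F$ was established above, so the two assignments are mutually inverse.

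Finally I would record that the correspondence respects the $2$-cell order. If $F\leq F'$ then $F^{\nat}\leq(F')^{\nat}$, hence $F_!\varphi=\varphi\circ F^{\nat}\leq\varphi\circ(F')^{\nat}=F'_!\varphi$ for all $\varphi$, i.e. $F_!\leq F'_!$; post-composing with $\sup_{\CD}$ preserves this order (a $\CQ$-functor carries $1_{|X|}\leq\PD(F_!\varphi,F'_!\varphi)$ to $1_{|\varphi|}\leq\CD(H_F\varphi,H_{F'}\varphi)$), so $H_F\leq H_{F'}$, while the converse follows by pre-composing $H_F\leq H_{F'}$ with $\sY_{\CE}$. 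I expect the main obstacle to be not any single calculation but the correct identification of the density formula $\varphi\star\sY_{\CE}\cong\varphi$ as precisely what rigidifies $H$, together with the clean bookkeeping that the $\sup_{\CD}$ supplied by totality is genuinely a left adjoint computing suprema and satisfies $\sup_{\CD}\sY_{\CD}\cong 1_{\CD}$; once these are in place, the remainder is a formal assembly of adjoints and the Yoneda Lemma.
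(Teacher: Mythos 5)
Your proposal is correct and takes essentially the same route as the paper's own proof: you define $H=\sup_{\CD}\circ\, F_!$, use the Yoneda-density identity $\varphi\cong\varphi\star\sY_{\CE}$ (from Lemma \ref{Yoneda_lemma}) to force uniqueness, and obtain colimit-preservation from $H$ being a composite of two left adjoints, with $H\sY_{\CE}=F$ following from $F_!\sY_{\CE}=\sY_{\CD}F$ and full faithfulness of $\sY_{\CD}$. The additional details you supply (the explicit computation of $F_!\sY_{\CE}$ and the order-preservation check) merely flesh out steps the paper leaves implicit.
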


\begin{proof}
Let us first note that every $\varphi\in\ob\PE$ is the weighted colimit of $\sY_{\CE}$ by $\varphi$, since
$$\PE(\varphi,\psi)=\PE(\varphi,\PE(\sY_{\CE}-,\psi))$$
for all $\psi\in\ob\PE$. Hence, given $F$, any $H$ with $H\sY_{\CE}=F$ that preserves all weighted colimits must satisfy
$$H\varphi=H(\varphi\star\sY_{\CE})=\varphi\star H\sY_{\CE}=\varphi\star F.$$

Conversely, a straightforward computation shows that $H$ defined in this way is actually the composite $\CQ$-functor
$$\PE\to^{F_!}\PD\to^{\sup_{\CD}}\CD$$
which, as the composite of two left adjoints, must preserve all weighted colimits. Furthermore,
$$H\sY_{\CE}={\sup}_{\CD}F_!\sY_{\CE}={\sup}_{\CD}\sY_{\CD}F=F$$
since $\sY_{\CD}$ is fully faithful.
\end{proof}

Consequently, for small $\CQ$ there is an adjunction
$$\bfig
\morphism|a|/@{<-}@<6pt>/<700,0>[\CQ\text{-}{\bf TotCat}`\QCat,;\sP]
\morphism|b|/@{->}@<-6pt>/<700,0>[\CQ\text{-}{\bf TotCat}`\QCat,;]
\place(400,0)[\bot]
\efig$$
with $\CQ\text{-}{\bf TotCat}$ denoting the category of small total $\CQ$-categories and their weighted-colimit-preserving $\CQ$-functors. The induced monad on $\QCat$ is again $(\sP,\sY,\sS)$, as described before Corollary \ref{Yoneda_fully_faithful}.

\begin{cor}[Herrlich \cite{Herrlich1974}]
For a concrete category over $\CB$, the topological (meta)category $\PE$ over $\CB$ has the following universal property: Every concrete functor $F:\CE\to\CD$ into a topological category $\CD$ over $\CB$ factors uniquely through a concrete functor $H:\PE\to\CD$ with $H\sY_{\CE}=F$ that preserves final sinks.
\end{cor}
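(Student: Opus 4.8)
The plan is to obtain this statement as the concrete-categorical reading of the preceding Theorem, applied to the free quantaloid $\QB$. First I would pass from $\CE$ and $\CD$ to their associated $\QB$-categories $\overline{\CE}$ and $\overline{\CD}$, using the 2-equivalence $\CATB\simeq\QB\text{-}\CAT$ so that concrete functors $\CE\to\CD$ correspond bijectively, and order-preservingly, to $\QB$-functors $\overline{\CE}\to\overline{\CD}$. By Theorem~\ref{topological_total}, the hypothesis that $\CD$ be topological over $\CB$ is precisely the hypothesis that $\overline{\CD}$ be total, which is exactly the standing assumption of the preceding Theorem. Moreover, by Corollary~\ref{Yoneda_fully_faithful} the presheaf metacategory $\PE$ (that is, $\sP\overline{\CE}$) is itself total, hence corresponds under the 2-equivalence $\CATB\simeq\QB\text{-}\CAT$ to a topological metacategory over $\CB$; this identifies the object ``$\PE$ over $\CB$'' named in the statement, together with its Yoneda functor $\sY_{\overline{\CE}}$.

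With this dictionary fixed, the preceding Theorem provides, for each concrete functor $F:\CE\to\CD$, a unique weighted-colimit-preserving $\QB$-functor $H:\PE\to\overline{\CD}$ satisfying $H\sY_{\overline{\CE}}$ equals the $\QB$-functor associated with $F$, and conversely; transported back through the 2-equivalence, $H$ becomes the desired concrete functor. It then remains only to check that the enriched side-condition ``$H$ preserves weighted colimits'' specializes to the concrete side-condition ``$H$ preserves final sinks.''

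This last translation is the only step demanding care, and it is where I would concentrate the argument. Here I would invoke the identification, established in Section~\ref{sink_topological}, of a final lifting of a structured sink with a supremum of the corresponding presheaf, so that a concrete functor preserves final sinks exactly when the associated $\QB$-functor preserves suprema of presheaves. I would then combine Remark~\ref{sup_tensor_colimit} with the formula $\varphi\star D\cong\sup D_!\varphi$ of Section~\ref{weighted_colimit}: since every supremum is a weighted colimit, $\sup\varphi=\varphi\star 1_{\overline{\CE}}$, and, conversely, every weighted colimit into a total category is computed from a supremum of a presheaf, preservation of all suprema is equivalent to preservation of all weighted colimits (cf.\ Theorem~\ref{total_equivalent}). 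Matching these two notions closes the gap. The main obstacle is thus not a computation but the bookkeeping required to confirm that weighted-colimit-preservation specializes precisely to Herrlich's notion of final-sink-preservation; granted that, the Corollary is a direct reinterpretation of the preceding Theorem through Theorems~\ref{topological_total} and~\ref{total_equivalent}.
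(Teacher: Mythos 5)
Your proposal is correct and takes essentially the same route as the paper: the paper's own proof consists precisely of applying the preceding Theorem under the dictionary ``topological $=$ total'' and observing that a concrete functor preserves final sinks if and only if it preserves suprema of presheaves or, equivalently, weighted colimits. Your spelled-out justification of that last equivalence (via $\sup\varphi=\varphi\star 1_{\overline{\CE}}$ and $\varphi\star D\cong\sup D_{!}\varphi$) is exactly the bookkeeping the paper leaves implicit.
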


Here the objects of $\PE$ are structured sinks satisfying the closure property (\ref{si_dist}) of Section \ref{sink_topological}, and the full concrete embedding $\sY_{\CE}$ assigns to every object $X$ the structured sink of all maps with codomain $|X|$.

\begin{proof}
The only point to observe is that a concrete functor preserves final sinks if and only if it preserves suprema (see Section \ref{sink_topological}) or, equivalently, weighted colimits (see Section \ref{weighted_colimit}).
\end{proof}

As a consequence, the category of small topological categories over the small category $\CB$ and the finality-preserving concrete functors admits a right adjoint forgetful functor into $\CatB$.

\section{Total $\CQ$-categories are induced by Isbell adjunctions} \label{total_Chu}

Let us re-interpret Proposition \ref{composition_graph_adjunction}, by setting up the (meta-)2-category
$$\QCHU$$
whose objects are given by $\CQ$-distributors, and whose morphisms
$$(F,G):\Phi\to\Psi$$
are given by $\CQ$-functors $F:\CE\to\CD$, $G:\CC\to\CB$ which make the diagram
$$\bfig
\square<600,400>[\CE`\CD`\CB`\CC;F_{\nat}`\Phi`\Psi`G^{\nat}]
\place(300,0)[\circ] \place(300,400)[\circ] \place(0,200)[\circ] \place(600,200)[\circ]
\efig$$
commute or, equivalently, satisfy the \emph{diagonal condition}
$$\Psi\lda F^{\nat}=G_{\nat}\rda\Phi.$$
With composition and order defined as in $\QCAT$, $\QCHU$ becomes a (meta-)2-category whose morphisms may also be referred to as \emph{$\CQ$-Chu transforms} (in generalization of the terminology used for morphisms of Chu spaces; see \cite{Barr1991,Pratt1995,Giuli2007}). There is an obvious 2-functor
$$\dom:\QCHU\to\QCAT,\quad (F,G)\mapsto F.$$

\begin{prop}
For all $\CQ$-categories $\CE$ and $\CQ$-distributors $\Psi:\CD\to\CC$, there is a natural 1-1 correspondence
$$\bfig
\morphism<600,0>[\CE`\dom\Psi;F]
\morphism(-200,-100)/-/<1000,0>[`;]
\morphism(0,-250)<600,0>[(\sY_{\CE})_{\nat}`\Psi;(F,G)]
\efig$$
which preserves the order of $\CQ$-functors and $\CQ$-Chu transforms.
\end{prop}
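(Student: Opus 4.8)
The plan is to read this off Proposition~\ref{composition_graph_adjunction}, of which it is a mild reformulation once the Chu-transform condition is spelled out. Write $\Psi:\CD\oto\CC$, so that $\dom\Psi=\CD$, and note that a morphism $(F,G):(\sY_{\CE})_{\nat}\to\Psi$ in $\QCHU$ amounts to a pair of $\CQ$-functors $F:\CE\to\CD$ and $G:\CC\to\PE$ satisfying the commutativity condition $\Psi\circ F_{\nat}=G^{\nat}\circ(\sY_{\CE})_{\nat}$ of $\CQ$-distributors $\CE\oto\CC$.

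Starting from an arbitrary $\CQ$-functor $F:\CE\to\CD$, I would form the $\CQ$-distributor $\Theta:=\Psi\circ F_{\nat}:\CE\oto\CC$ and feed it into Proposition~\ref{composition_graph_adjunction}, applied to the pair $\CE,\CC$. This yields a \emph{unique} $\CQ$-functor $G:\CC\to\PE$ with $G^{\nat}\circ(\sY_{\CE})_{\nat}=\Theta$, given explicitly by $(GW)_X=\Theta(X,W)=(\Psi\circ F_{\nat})(X,W)$. Since $\Theta=\Psi\circ F_{\nat}$, this is precisely the defining equation of a Chu transform, so $(F,G)$ is a morphism $(\sY_{\CE})_{\nat}\to\Psi$. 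Conversely $\dom$ sends $(F,G)\mapsto F$, and the uniqueness clause of Proposition~\ref{composition_graph_adjunction} makes these two passages mutually inverse: any second component $G$ of a Chu transform with first component $F$ must satisfy $G^{\nat}\circ(\sY_{\CE})_{\nat}=\Psi\circ F_{\nat}$, hence coincides with the $G$ manufactured from $F$.

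For the $2$-dimensional statement I would transport the order and naturality already recorded in Proposition~\ref{composition_graph_adjunction}. Naturality in $\CE$ (and in $\Psi$) follows from naturality there together with the functoriality of precomposition $(-)\circ F_{\nat}$ and postcomposition $\Psi\circ(-)$ in the quantaloid $\CQ\text{-}\DIS$. For order-preservation the relevant inputs are that $(-)_{\nat}:(\QCAT)^{\co}\to\CQ\text{-}\DIS$ reverses the order of $\CQ$-functors while composition of distributors is monotone in each variable: thus $F\leq F'$ gives $\Psi\circ F_{\nat}\geq\Psi\circ F'_{\nat}$, and Proposition~\ref{composition_graph_adjunction} converts this into the corresponding inequality between the induced second components $G,G'$.

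The one point I would watch carefully is the variance of this second leg. Because $(-)_{\nat}$ is order-reversing, $F\leq F'$ produces the \emph{reversed} inequality $G'\leq G$ on the $\PE$-valued components, and one must confirm that this matches exactly the $2$-cell order on $\QCHU$ (whose morphisms pair a covariant leg $F$ with a contravariantly oriented leg $G$), so that the bijection comes out order-preserving rather than order-reversing. Everything else is a direct unwinding of the definitions against Proposition~\ref{composition_graph_adjunction}, with no further obstacle.
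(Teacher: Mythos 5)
Your proposal is correct and takes essentially the same route as the paper: the paper's proof likewise consists of applying Proposition \ref{composition_graph_adjunction} to the composite distributor $\Psi\circ F_{\nat}:\CE\oto\CC$ to obtain the unique $G:\CC\to\PE$ with $G^{\nat}\circ(\sY_{\CE})_{\nat}=\Psi\circ F_{\nat}$, which is exactly the $\CQ$-Chu transform condition. Your closing caution about the variance of the contravariant leg is well placed (the paper itself later remarks that the induced embedding $\QCat\to\QChu$ fails to be 2-categorical), but it does not affect the correctness of the argument.
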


\begin{proof}
Given $F$, by Proposition \ref{composition_graph_adjunction} there is a unique $\CQ$-functor $G:\CC\to\PE$ with $G^{\nat}\circ(\sY_{\CE})_{\nat}=\Psi\circ F_{\nat}$, that is: with $(F,G):(\sY_{\CE})_{\nat}\to\Psi$ a $\CQ$-Chu transform.
\end{proof}

Under the restriction to small $\CQ$-categories (for $\CQ$ small as well) we therefore obtain a full and faithful left adjoint to
$$\dom:\QChu\to\QCat.$$

\begin{cor} \label{QCat_QChu_cofreflective}
The assignment $\CE\mapsto(\sY_{\CE})_{\nat}$ embeds $\QCat$ into $\QChu$ as a full coreflective subcategory.
\end{cor}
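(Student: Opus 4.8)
The plan is to recognize the preceding Proposition as nothing but the hom-bijection of an adjunction, and then to upgrade it to a coreflective embedding by checking that its unit is an isomorphism. Working under the stated smallness restriction (so that $\QChu$ and $\QCat$ are honest categories and $\dom\colon\QChu\to\QCat$ is an ordinary functor), write $L$ for the assignment $\CE\mapsto(\sY_\CE)_\nat$. The general principle I will invoke at the end is purely formal: a full and faithful functor that is left adjoint to some $R$ exhibits its domain as a full coreflective subcategory, with coreflector $R$.

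First I would make explicit that the natural $1$-$1$ correspondence of the Proposition,
$$\QChu((\sY_\CE)_\nat,\Psi)\cong\QCat(\CE,\dom\Psi),$$
is exactly the hom-bijection of an adjunction $L\dv\dom$. The naturality and order-preservation recorded there give the functoriality of $L$: for a $\CQ$-functor $F\colon\CE\to\CD$ one sets $LF$ to be the transpose of $\eta_\CD\circ F$, where $\eta$ denotes the unit. It then remains only to compute $\eta$. Since $\dom((\sY_\CE)_\nat)=\CE$, the composite $\dom\circ L$ is the identity on objects; and because the Proposition furnishes, for a prescribed first component, a \emph{unique} second component of a $\CQ$-Chu transform, the transpose (under the correspondence, which sends $(F,G)$ to its first component $F$) of the identity $\CQ$-Chu transform on $(\sY_\CE)_\nat$ is $\id_\CE$. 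Thus $\eta_\CE=\id_\CE$.

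Consequently $\eta$ is an isomorphism, so $L$ is full and faithful; equivalently, $\dom\circ L=\id_{\QCat}$ also on morphisms, i.e.\ $LF$ is the unique $\CQ$-Chu transform $(\sY_\CE)_\nat\to(\sY_\CD)_\nat$ whose first component is $F$. Invoking the formal principle above, the full and faithful left adjoint $L$ embeds $\QCat$ into $\QChu$ as a full coreflective subcategory, with coreflector $\dom$, which is the assertion of the Corollary. I do not anticipate a genuine obstacle: all the analytic work sits in the preceding Proposition~--- specifically the uniqueness of the second component $G$ of a $\CQ$-Chu transform given its first component $F$~--- and what is left is the bookkeeping that the unit is literally the identity, together with the standard implication ``unit an isomorphism $\Rightarrow$ left adjoint full and faithful.''
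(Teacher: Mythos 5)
Your proof is correct and takes essentially the same route as the paper: the paper likewise reads the preceding Proposition as exhibiting $\CE\mapsto(\sY_{\CE})_{\nat}$ as a full and faithful left adjoint to $\dom\colon\QChu\to\QCat$ (full faithfulness coming from the uniqueness of the second component $G$, which forces the unit to be the identity), and then the coreflective embedding follows formally. Your write-up just makes explicit the bookkeeping that the paper leaves implicit.
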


\begin{rem}
Note that the embedding of $\QCat\to\QChu$ of Corollary \ref{QCat_QChu_cofreflective} does NOT preserve the local order in $\QCat$, i.e., it is not a 2-categorical embedding.
\end{rem}

Every $\CQ$-distributor $\Phi:\CE\oto\CB$ induces the \emph{Isbell adjunction} \cite{Shen2014,Shen2013a} (in generalization of the terminology introduced by Lawvere \cite{Lawvere1986} for enriched categories)
$$\bfig
\morphism|a|/@{->}@<6pt>/<500,0>[\PE`\PdB,;\uPhi]
\morphism|b|/@{<-}@<-6pt>/<500,0>[\PE`\PdB,;\dPhi]
\place(230,0)[\bot]
\morphism(1000,60)/|->/[\varphi`\Phi\lda\varphi;]
\morphism(900,-60)/<-|/[\psi\rda\Phi`\psi;]
\place(1200,0)[\bot]
\efig.$$
Indeed, the easily established identity
$$\psi\rda(\Phi\lda\varphi)=(\psi\rda\Phi)\lda\varphi$$
translates to
$$\PdB(\uPhi\varphi,\psi)=\PE(\varphi,\dPhi\psi)$$
for all $\varphi\in\ob\PE$, $\psi\in\ob\PB$. We denote by $\sI\Phi$ the (very large) full reflective $\CQ$-subcategory of $\PE$ of presheaves fixed by the adjunction, i.e.,
$$\ob(\sI\Phi)=\{\varphi\in\ob\PE\mid\dPhi\uPhi\varphi=\varphi\},$$
and call $\sI\Phi$ the \emph{Isbell $\CQ$-category} of $\Phi$.

\begin{prop}
$\sI$ may be functorially extended to $\CQ$-Chu transforms such that, for every $\CQ$-functor $F:\CE\to\CD$,
$$\sI((\sY_{\CE})_{\nat}\to^{(F,F^*)}(\sY_{\CD})_{\nat})=F_!:\PE\to\PD.$$
Hence, under the restriction to small objects, one has the commutative diagram:
$$\bfig
\Atriangle/<-`->`->/[\QChu`\QCat`\QCat;(\sY_{\Box})_{\nat}`\sI`\sP]
\efig$$
\end{prop}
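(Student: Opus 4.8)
The plan is to extend $\sI$ to $\CQ$-Chu transforms by means of the reflectors $\dPhi\uPhi$, to read off the object part of the triangle from a direct description of $\sI((\sY_{\CE})_{\nat})$, and to obtain the morphism formula by specialization. First I would identify the objects. For $\Phi=(\sY_{\CE})_{\nat}:\CE\oto\PE$ the Yoneda Lemma \ref{Yoneda_lemma} gives $\Phi(X,\chi)=\chi_X=\PE(\sY_{\CE}X,\chi)$, so computing the right lifting yields $(\uPhi\varphi)_{\chi}=\bw_{X}(\chi_X\lda\varphi_X)=\PE(\varphi,\chi)$; in other words $\uPhi$ is the fully faithful dual Yoneda $\CQ$-functor $\sYd_{\PE}:\PE\to\sP^{\dag}(\PE)$. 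A short calculation of $\dPhi\uPhi\varphi=(\uPhi\varphi)\rda\Phi$ gives $(\dPhi\uPhi\varphi)_X=\bw_{\chi}(\PE(\varphi,\chi)\rda\chi_X)$, where ``$\geq\varphi_X$'' holds because $\PE(\varphi,\chi)\leq\chi_X\lda\varphi_X$, while ``$\leq\varphi_X$'' follows by testing the meet at $\chi=\varphi$, where $\PE(\varphi,\varphi)\geq 1_{|\varphi|}$. Hence $\dPhi\uPhi\varphi=\varphi$ for all $\varphi$, so $\sI((\sY_{\CE})_{\nat})=\PE=\sP\CE$, with reflector $\dPhi\uPhi=\id$.

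Next I would define $\sI$ on a Chu transform $(F,G):\Phi\to\Psi$ (with $\Phi:\CE\oto\CB$, $\Psi:\CD\oto\CC$, $F:\CE\to\CD$, $G:\CC\to\CB$). Writing $G^{\dag}:\sP^{\dag}\CB\to\sP^{\dag}\CC$, $\beta\mapsto G^{\nat}\circ\beta$, for restriction along $G$ of covariant presheaves, the key step is to prove the commuting square
\[
\uPsi\circ F_!=G^{\dag}\circ\uPhi:\PE\to\sP^{\dag}\CC.
\]
Using $F_!\varphi=\varphi\circ F^{\nat}$, the standard lifting identity $\Psi\lda(\varphi\circ F^{\nat})=(\Psi\lda F^{\nat})\lda\varphi$, the adjunction fact $\Psi\lda F^{\nat}=\Psi\circ F_{\nat}$ (from $F_{\nat}\dv F^{\nat}$), and the square/Chu condition $\Psi\circ F_{\nat}=G^{\nat}\circ\Phi$, the left-hand side reduces to $(G^{\nat}\circ\Phi)\lda\varphi$, so the square amounts to the identity $(G^{\nat}\circ\Phi)\lda\varphi=G^{\nat}\circ(\Phi\lda\varphi)$. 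I expect this last equality to be the \emph{main obstacle}: the inequality ``$\geq$'' is formal, but ``$\leq$'' genuinely uses that $G^{\nat}$ is a right adjoint, via $G_{\nat}\dv G^{\nat}$. Concretely, for $\xi$ with $\xi\circ\varphi\leq G^{\nat}\circ\Phi$ one multiplies by $G_{\nat}$ on the left to get $G_{\nat}\circ\xi\circ\varphi\leq G_{\nat}\circ G^{\nat}\circ\Phi\leq\Phi$, whence $G_{\nat}\circ\xi\leq\Phi\lda\varphi$ and therefore $\xi\leq G^{\nat}\circ G_{\nat}\circ\xi\leq G^{\nat}\circ(\Phi\lda\varphi)$.

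Granting the square, I would set $\sI(F,G):\sI\Phi\to\sI\Psi$ to be the corestriction to the fixed objects $\sI\Psi$ of the $\CQ$-functor $\dPsi\uPsi\circ F_!$ restricted along $\sI\Phi\hookrightarrow\PE$; by the square this equals $\dPsi\circ G^{\dag}\circ\uPhi$ there, so the $F$-side and the $G$-side induce the same $\CQ$-functor, making $\sI$ manifestly valued in $\QCat$. Functoriality is then checked directly: identities are immediate because the reflector is the identity on fixed objects, and for a composite $(F',G')\circ(F,G)$ one uses $F'_!F_!=(F'F)_!$ together with the triangle identity $\uPsi\dPsi\uPsi=\uPsi$ of the Isbell adjunction and the square for $(F',G')$ to collapse the intermediate reflector $\dPsi\uPsi$.

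Finally I would specialize. For $\Phi=(\sY_{\CE})_{\nat}$ and $\Psi=(\sY_{\CD})_{\nat}$ the first paragraph gives $\sI\Phi=\PE$ and $\sI\Psi=\PD$ with both reflectors the identity, and the Chu transform attached to $F$ is exactly $(F,F^*)$ (the Proposition preceding this one); hence $\sI(F,F^*)=\dPsi\uPsi\circ F_!=F_!$. The commuting triangle $\sI\circ(\sY_{\Box})_{\nat}=\sP$ then holds on objects by $\sI((\sY_{\CE})_{\nat})=\sP\CE$ and on morphisms by $\sI(F,F^*)=F_!=\sP F$, recalling that $\sP$ sends $F$ to $F_!$.
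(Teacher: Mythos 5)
Your proposal is correct, and its skeleton is the paper's: you define $\sI(F,G)$ as the reflected composite $\dPsi\uPsi F_!$, exactly as in the paper, and your computation that $\dPhi\uPhi=1_{\PE}$ for $\Phi=(\sY_{\CE})_{\nat}$ (whence $\sI((\sY_{\CE})_{\nat})=\PE$ and then $\sI(F,F^*)=F_!$, since the reflector on $\PD$ is the identity) is the paper's object-level argument, verified by testing the meet at $\chi=\varphi$ rather than by invoking the dual Yoneda Lemma in $\sPd(\PE)$. The genuine difference lies in how functoriality is organized. The paper proves the lax inequality $H_!\dPsi\uPsi\leq\dXi\uXi H_!$ for $(H,K):\Psi\to\Xi$ by a chain of distributor estimates, and combines it with the trivial inequality $1_{\PD}\leq\dPsi\uPsi$ and idempotency of $\dXi\uXi$. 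You instead isolate the exact naturality square $\uPsi\circ F_!=G^{\dag}\circ\uPhi$ for the left adjoints of the Isbell adjunctions, after which functoriality collapses in two lines via the triangle identity $\uPsi\dPsi\uPsi=\uPsi$. The computational core is shared: your key identity $(G^{\nat}\circ\Phi)\lda\varphi=G^{\nat}\circ(\Phi\lda\varphi)$, which you prove directly from $G_{\nat}\dv G^{\nat}$, is precisely the paper's step $K^{\nat}\circ(\Psi\lda\psi)=(K^{\nat}\circ\Psi)\lda\psi$, there derived via $K^{\nat}\circ\Psi=K_{\nat}\rda\Psi$. What your route buys: an equality rather than an inequality, no appeal to idempotency of the reflector, and the symmetry (left implicit in the paper) that $\sI(F,G)$ may equally be computed from the $G$-leg as $\dPsi G^{\dag}\uPhi$, making it manifest that both legs of the Chu transform induce the same $\CQ$-functor. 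What it costs: the introduction of the auxiliary restriction functors $G^{\dag}$ on covariant presheaf categories, which the paper's argument avoids.
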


\begin{proof}
First, for a $\CQ$-Chu transform $(F,G):\Phi\to\Psi$, one composes $F_!:\PE\to\PD$ with the reflector of $\sI\Phi\ \to/^(->/\PD$ to define
$$\sI(F,G):=\dPsi\uPsi F_!:\sI\Phi\to\sI\Psi.$$
To see the functoriality of $\sI$, we need to check $\sI(H,K)\sI(F,G)=\sI(HF,GK)$, i.e.,
$$\dXi\uXi H_!\dPsi\uPsi F_!=\dXi\uXi H_! F_!$$
where $(H,K):\Psi\to\Xi$. Trivially $\dXi\uXi H_! F_!\leq\dXi\uXi H_!\dPsi\uPsi F_!$ since $1_{\PD}\leq\dPsi\uPsi$ by adjunction. For the reverse inequality, since $\dXi\uXi$ is idempotent, it suffices to prove $H_!\dPsi\uPsi\leq\dXi\uXi H_!$. Indeed, for all $\psi\in\ob\PD$,
\begin{align*}
H_!\dPsi\uPsi\psi&=((\Psi\lda\psi)\rda\Psi)\circ H^{\nat}\\
&\leq((K^{\nat}\circ(\Psi\lda\psi))\rda(K^{\nat}\circ\Psi))\circ H^{\nat}\\
&=((K^{\nat}\circ(\Psi\lda\psi))\rda(\Xi\circ H_{\nat}))\circ H^{\nat}&((H,K)\ \text{is a}\ \CQ\text{-Chu transform})\\
&\leq(K^{\nat}\circ(\Psi\lda\psi))\rda(\Xi\circ H_{\nat}\circ H^{\nat})\\
&\leq(K^{\nat}\circ(\Psi\lda\psi))\rda\Xi&(H_{\nat}\dv H^{\nat})\\
&=((K^{\nat}\circ\Psi)\lda\psi)\rda\Xi&(\text{see explanation below})\\
&=((\Xi\circ H_{\nat})\lda\psi)\rda\Xi&((H,K)\ \text{is a}\ \CQ\text{-Chu transform})\\
&=(\Xi\lda(\psi\circ H^{\nat}))\rda\Xi&(\text{by (b) in the proof of Theorem \ref{total_cototal_equivalent}})\\
&=\dXi\uXi H_!\psi.
\end{align*}
Here the fourth equality from the bottom holds since one easily derives $K^{\nat}\circ\Psi=K_{\nat}\rda\Psi$ for any $\Psi$, and consequently
$$K^{\nat}\circ(\Psi\lda\psi)=K_{\nat}\rda(\Psi\lda\psi)=(K_{\nat}\rda\Psi)\lda\psi=(K^{\nat}\circ\Psi)\lda\psi.$$

Next, for every $\varphi\in\ob\PE$, it is easy to see that
$$(\sY_{\CE})_{\nat}\lda\varphi=\PE(\varphi,-)\in\ob\PdE.$$
Consequently, for all $X\in\ob\CE$, with a repeated application of the Yoneda Lemma one obtains
\begin{align*}
(((\sY_{\CE})_{\nat}\lda\varphi)\rda(\sY_{\CE})_{\nat})_X&=\bw_{\psi\in\ob\PE}\PE(\varphi,\psi)\rda\PE(\sY_{\CE}X,\psi)\\
&=\sPd(\PE)(\PE(\sY_{\CE}X,-),\PE(\varphi,-))\\
&=\PE(\sY_{\CE}X,\varphi)=\varphi_X,
\end{align*}
and therefore $\sI(\sY_{\CE})_{\nat}=\PE$.
\end{proof}

\begin{thm} \label{total_IPhi}
The following assertions on a $\CQ$-category $\CE$ are equivalent:
\begin{itemize}
\item[\rm (i)] $\CE$ is total;
\item[\rm (ii)] $\CE$ is equivalent to a full reflective $\CQ$-subcategory of some presheaf $\CQ$-category;
\item[\rm (iii)] $\CE$ is equivalent to the Isbell $\CQ$-category of some $\CQ$-distributor.
\end{itemize}
\end{thm}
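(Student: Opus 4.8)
The plan is to prove the cycle (i) $\Ra$ (iii) $\Ra$ (ii) $\Ra$ (i), exploiting the machinery already assembled. The implications (iii) $\Ra$ (ii) and (ii) $\Ra$ (i) should be the routine ones, while the genuine content lies in (i) $\Ra$ (iii), which asks us to realize an arbitrary total $\CQ$-category as the Isbell $\CQ$-category of some distributor.

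For (iii) $\Ra$ (ii), I would simply observe that $\sI\Phi$ is \emph{by definition} a full reflective $\CQ$-subcategory of the presheaf category $\PE$ (the reflector being $\dPhi\uPhi$, which is idempotent by the Isbell adjunction $\uPhi\dv\dPhi$), so a $\CQ$-category equivalent to some $\sI\Phi$ is equivalent to a full reflective $\CQ$-subcategory of a presheaf category. For (ii) $\Ra$ (i), the key point is that totality is preserved under passage to full reflective $\CQ$-subcategories and is invariant under equivalence. Concretely, if $\CE$ is equivalent to a full reflective sub-$\CQ$-category of $\PD$ for some $\CD$, then since $\PD$ is total (by Corollary \ref{Yoneda_fully_faithful}, its Yoneda functor has a left adjoint), one computes suprema in the subcategory by taking the supremum in $\PD$ and applying the reflector; the reflector, being a left adjoint, preserves these suprema, so the subcategory has all suprema and is therefore total by Theorem \ref{total_equivalent}(iii)$\Ra$(i). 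Equivalence of $\CQ$-categories transports totality, closing this step.

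The substantive implication is (i) $\Ra$ (iii). Given a total $\CE$, the natural candidate is to take the distributor $\Phi=(\sY_{\CE})_{\nat}:\CE\oto\PE$, or better, a distributor built so that its Isbell category recovers $\CE$ itself. I would first recall from the preceding proposition that $\sI(\sY_{\CE})_{\nat}=\PE$, which is \emph{not} what we want; instead, the correct choice should encode the totality of $\CE$. Since $\CE$ is total, $\sY_{\CE}:\CE\to\PE$ has a left adjoint $\sup:\PE\to\CE$, exhibiting $\CE$ as a full reflective $\CQ$-subcategory of $\PE$ via the fully faithful $\sY_{\CE}$ (Corollary \ref{Yoneda_fully_faithful}) with reflector $\sY_{\CE}\circ\sup$. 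The plan is to take $\Phi=\CE:\CE\oto\CE$, the identity distributor, so that $\uPhi=\dPhi$ reduce to Isbell conjugation on $\PE$ against $\CE$, and to show that the fixed presheaves are exactly the representables $\sY_{\CE}X$ when $\CE$ is total — equivalently, that $\sup$ provides a bijection (up to isomorphism) between fixed presheaves and objects of $\CE$. The identity $\dPhi\uPhi\varphi=\varphi$ for $\varphi=\sY_{\CE}X$ follows from full faithfulness of Yoneda, while for the converse one uses the adjunction $\uPhi\dv\dPhi$ together with $\sup\dv\sY_{\CE}$ to force every fixed presheaf to be representable.

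The hard part will be pinning down the precise distributor whose Isbell category is $\CE$ and verifying the fixed-point condition in both directions. The delicate direction is showing that \emph{only} the images of $\CE$ under Yoneda are fixed — this is where totality is essential, as it supplies the left adjoint $\sup$ needed to collapse the ambient presheaf category back onto $\CE$. I expect the cleanest route is to phrase the argument via the adjunction $\sup\dv\sY_{\CE}$: a presheaf $\varphi$ is fixed precisely when the unit $\varphi\to\sY_{\CE}(\sup\varphi)$ is an isomorphism, and totality guarantees the fixed objects are exactly (a full reflective copy of) $\CE$. Care must be taken that the chosen $\Phi$ makes the Isbell reflector coincide with $\sY_{\CE}\circ\sup$; I would verify this by a direct computation using the formulas $\uPhi\varphi=\Phi\lda\varphi$ and $\dPhi\psi=\psi\rda\Phi$ together with the Yoneda Lemma \ref{Yoneda_lemma}.
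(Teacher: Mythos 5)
Your proposal is correct, but it runs the cycle in the opposite direction from the paper, and the substantive step is carried by a different construction. The paper proves (i)${}\Ra{}$(ii)${}\Ra{}$(iii)${}\Ra{}$(i): Yoneda exhibits a total $\CE$ as reflective in $\PE$; then, given \emph{any} full reflective inclusion $J:\CE\to\PD$ with reflector $L$, it builds the evaluation distributor $\Phi:\CD\oto\CE$, $\Phi(X,\psi)=\psi_X$, and verifies $JL=\dPhi\uPhi$, so $\CE=\sI\Phi$; finally (iii)${}\Ra{}$(i) uses exactly your two closing ingredients (Isbell categories are reflective in the total category $\PE$; reflective subcategories of total categories are total via $L\sup_{\CC}J_!\dv\sY_{\CA}$). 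You instead go (i)${}\Ra{}$(iii) directly by taking the identity distributor $\Phi=\CE:\CE\oto\CE$ and showing its fixed presheaves are precisely the representables when $\CE$ is total; this works, and the cleanest form of your computation is the one you anticipate: $\uPhi\varphi=\CE\lda\varphi=\sYd_{\CE}(\sup\varphi)$ by the very definition of supremum, and $\dPhi\sYd_{\CE}Y=\sY_{\CE}Y$ always, so $\dPhi\uPhi=\sY_{\CE}\circ\sup$ and fixedness of $\varphi$ means $\varphi=\sY_{\CE}(\sup\varphi)$. In fact your choice is the paper's (ii)${}\Ra{}$(iii) construction specialized to $J=\sY_{\CE}$, $L=\sup$, under which the evaluation distributor becomes the hom distributor. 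What each buys: the paper's decomposition proves the more general (and independently useful) fact that \emph{every} full reflective $\CQ$-subcategory of a presheaf $\CQ$-category is an Isbell category of an explicit distributor, while your route is more economical for the theorem itself and exhibits a total $\CE$ as its own MacNeille completion, anticipating Corollary \ref{dense_codense_subcategory}. Two small points to tighten: write $\uPhi\dv\dPhi$ rather than ``$\uPhi=\dPhi$'' (the two maps are adjoint, not equal), and in your (ii)${}\Ra{}$(i) step the presheaf on the subcategory must first be transported along the inclusion (apply $J_!$) before taking the ambient supremum, as in the paper's formula $L\sup_{\CC}J_!$.
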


\begin{proof}
(i)${}\Lra{}$(ii): $\CE$ is equivalent to its image under the Yoneda $\CQ$-functor $\sY_{\CE}$, which is reflective in $\PE$ when $\CE$ is total.

(ii)${}\Lra{}$(iii): We may assume that there is a full inclusion $J:\CE\ \to/^(->/\PD$ with left adjoint $L$, for some $\CQ$-category $\CD$. Then one defines a $\CQ$-distributor $\Phi:\CD\oto\CE$ with $\Phi(X,\psi)=\psi_X$ for all $X\in\ob\CD$, $\psi\in\ob\CE\subseteq\ob\PD$. Now it is easy to verify the following calculation for all $\varphi\in\ob\PD$:
\begin{align*}
L\varphi&=\bw_{\psi\in\ob\PD}(L\psi\lda\varphi)\rda L\psi\\
&=\bw_{\psi\in\ob\PD}(\Phi(-,L\psi)\lda\varphi)\rda\Phi(-,L\psi)\\
&=\dPhi\uPhi\varphi.
\end{align*}
Consequently, $JL=\dPhi\uPhi$, from which one derives $\CE=\sI\Phi$, as desired.

(iii)${}\Lra{}$(i): Since an Isbell $\CQ$-category is a full reflective $\CQ$-subcategory of a presheaf $\CQ$-category, which by Corollary \ref{Yoneda_fully_faithful} is total, it suffices to verify that a full reflective $\CQ$-subcategory $\CA$ of a total $\CQ$-category $\CC$ is total. Indeed, if $L\dv J:\CA\ \to/^(->/\CC$, then $L\sup_{\CC}J_!$ serves as a left adjoint to $\sY_{\CA}$.
\end{proof}

\section{Characterization of the Isbell $\CQ$-category of $\CQ$-distributors} \label{IPhi}

Theorem \ref{total_IPhi} shows that every total $\CQ$-category can be seen as the Isbell $\CQ$-category of some $\CQ$-distributor. Conversely, given a $\CQ$-distributor $\Phi$, we now want to characterize its Isbell $\CQ$-category, as follows.

\begin{thm}[Shen-Zhang \cite{Shen2013a}] \label{IPhi_characterization}
Let $\Phi:\CE\oto\CD$ be a $\CQ$-distributor. Then a $\CQ$-category $\CC$ is equivalent to $\sI\Phi$ if, and only if, $\CC$ is total and there are $\CQ$-functors $F:\CE\to\CC$, $G:\CD\to\CC$ with
\begin{itemize}
\item[\rm (1)] $\Phi=G^{\nat}\circ F_{\nat}$;
\item[\rm (2)] $F$ is dense (so that every object $Z$ in $\CC$ is presentable as $Z\cong\varphi\star F$ for some $\varphi\in\ob\PE$);
\item[\rm (3)] $G$ is codense, that is: $G^{\op}$ is dense.
\end{itemize}
\end{thm}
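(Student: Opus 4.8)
The plan is to prove both implications by realising $\CC$ as the full reflective subcategory $\sI\Phi\subseteq\PE$ of presheaves fixed by the Isbell adjunction $\uPhi\dv\dPhi$. The organising device will be the two \emph{nerve} $\CQ$-functors attached to $F$ and $G$, namely $\widetilde F:\CC\to\PE$, $Z\mapsto\CC(F-,Z)=F_{\nat}(-,Z)$, and $\widehat G:\CC\to\PdD$, $Z\mapsto\CC(Z,G-)=G^{\nat}(Z,-)$. Since $\widetilde F=F^*\sY_{\CC}$, totality of $\CC$ together with Lemma \ref{Fshriek_Fstar_adjoint} and $\sup_{\CC}\dv\sY_{\CC}$ yields $(-\star F)=\sup_{\CC}F_!\dv\widetilde F$; and density of $F$ says precisely that $\widetilde F$ is fully faithful. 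Thus $\widetilde F$ exhibits $\CC$ as a full reflective subcategory of $\PE$ with reflector $-\star F$, and the whole problem reduces to identifying the associated idempotent with $\dPhi\uPhi$.

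For ``$\Leftarrow$'' I would establish two identities and read off the conclusion. First, expanding the weighted colimit and using (1) in the form $\Phi(X,Y)=\CC(FX,GY)$, one gets for every $\varphi\in\ob\PE$ that $\CC(\varphi\star F,GY)=\PE(\varphi,\CC(F-,GY))=\bw_{X}\CC(FX,GY)\lda\varphi_X=(\uPhi\varphi)_Y$, i.e.\ $\uPhi=\widehat G\circ(-\star F)$. Second, codensity of $G$, once unwound through $\lda_{\CQ^{\op}}=\rda_{\CQ}$, reads $\CC(FX,Z)=\bw_Y\CC(Z,GY)\rda\CC(FX,GY)$; comparing with the componentwise formula for $\dPhi$ gives $\dPhi\circ\widehat G=\widetilde F$. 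Combining the two, $\dPhi\uPhi\varphi=\widetilde F(\varphi\star F)$ for all $\varphi$, so the idempotent $\dPhi\uPhi$ on $\PE$ agrees on objects with the reflector-then-include idempotent $\widetilde F\circ(-\star F)$ of the subcategory $\widetilde F(\CC)$. As $\widetilde F$ is fully faithful and each $\varphi\star F$ exists by totality, the two idempotents share their fixed objects, whence $\widetilde F(\CC)=\ob\sI\Phi$ and $\widetilde F:\CC\to\sI\Phi$ is an equivalence.

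For ``$\Rightarrow$'' it suffices, after transporting along the given equivalence, to treat $\CC=\sI\Phi$, which is total by Theorem \ref{total_IPhi}. Here I would take $F=\dPhi\uPhi\,\sY_{\CE}$ (Yoneda followed by the reflector into $\sI\Phi$) and $G=\dPhi\,\sYd_{\CD}$. Condition (2) is immediate, since the Yoneda Lemma gives $(\widetilde F Z)_X=\sI\Phi(FX,Z)=\PE(\sY_{\CE}X,Z)=Z_X$, so $\widetilde F$ is the full inclusion $\sI\Phi\hookrightarrow\PE$. For (1) one computes $(GY)_X=\bw_{Y'}\CD(Y,Y')\rda\Phi(X,Y')=\Phi(X,Y)$ from the left $\CD$-action of $\Phi$, and then $\sI\Phi(FX,GY)=\PE(\sY_{\CE}X,GY)=(GY)_X=\Phi(X,Y)$ via the reflection adjunction and Yoneda. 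Finally (3) holds because $\uPhi\dv\dPhi$ restricts to the standard equivalence between the fixed objects in $\PE$ and those in $\PdD$, under which $\widehat G=\uPhi|_{\sI\Phi}$ is exactly that equivalence, hence fully faithful, i.e.\ $G$ is codense.

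The conceptual skeleton---fixed points of an Isbell adjunction form the reflective hull cut out by a dense/codense pair---is clean, so I expect the main obstacle to be purely calculational bookkeeping: keeping variances straight for the covariant presheaves in $\PdD$, translating codensity of $G$ into the usable lifting identity through $\lda_{\CQ^{\op}}=\rda_{\CQ}$, and correctly identifying $\uPhi$, $\dPhi$ with the nerves $\widehat G$, $\widetilde F$. The existence of $\varphi\star F$, needed to present each fixed presheaf as $\widetilde F(\varphi\star F)$, is precisely where totality of $\CC$ is consumed.
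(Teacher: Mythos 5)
Your proposal is correct and takes essentially the same approach as the paper: in the necessity direction you use the identical functors $F=\dPhi\uPhi\sY_{\CE}$ and $G=\dPhi\sYd_{\CD}$, and in the sufficiency direction your two identities $\widehat{G}(\varphi\star F)=\uPhi\varphi$ and $\dPhi\circ\widehat{G}=\widetilde{F}$ are exactly the computations by which the paper shows that the transpose $\widehat{F_{\nat}}$ lands in $\sI\Phi$ and hits every fixed presheaf. Your only deviation is organizational: where the paper verifies directly that $\widehat{F_{\nat}}:\CC\to\sI\Phi$ is well-defined, fully faithful and surjective, you compare the idempotents $\dPhi\uPhi$ and $\widetilde{F}\circ(-\star F)$ arising from the adjunction $-\star F\dv\widetilde{F}$, with both arguments resting on the same underlying fact (proved explicitly in the paper, cited as standard by you) that density of $F$ and codensity of $G$ amount to $\CC=F_{\nat}\lda F_{\nat}$ and $\CC=G^{\nat}\rda G^{\nat}$.
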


\begin{proof}
The conditions are certainly necessary: given $\Phi$ and assuming $\CC=\sI\Phi\ \to/^(->/\PE$, one defines $F,G$ by
$$FX=\dPhi\uPhi\sY_{\CE}X,\quad GY=\dPhi\sYd_{\CD}Y$$
for all $X\in\ob\CE$, $Y\in\ob\CD$. Then
\begin{align*}
\CC(FX,GY)&=\PE(\dPhi\uPhi\sY_{\CE}X,\dPhi\sYd_{\CD}Y)\\
&=\PdD(\uPhi\sY_{\CE}X,\sYd_{\CD}Y)\\
&=\PE(\sY_{\CE}X,\dPhi\sYd_{\CD}Y)&(\uPhi\dv\dPhi)\\
&=(\dPhi\sYd_{\CD}Y)_X&(\text{Yoneda Lemma})\\
&=\Phi(X,Y),
\end{align*}
so that $G^{\nat}\circ F_{\nat}=\Phi$. Another straightforward calculation shows that every $\varphi\in\ob\CC$ appears as the weighted colimit $\varphi\star F$, so that $F$ is dense; codensity of $G$ follows dually.

For the sufficiency of these conditions, we show that the transpose $\widehat{F_{\nat}}:\CC\to\PE$ of $F_{\nat}$ given by $\widehat{F_{\nat}}Z=F_{\nat}(-,Z)$ for all $Z\in\ob\CC$ is an equivalence of $\CQ$-categories when restricting the codomain to $\sI\Phi$.

First, one notices that $\CC=F_{\nat}\lda F_{\nat}=G^{\nat}\rda G^{\nat}$ whenever $F$ is dense and $G$ is codense. Indeed, by expressing each $Z\in\ob\CC$ as $Z\cong\varphi\star F$ for some $\varphi\in\ob\PE$ one has $\CC(Z,-)=F_{\nat}\lda\varphi$ by the definition of weighted colimits, and consequently
\begin{align*}
\CC(Z,-)&\leq F_{\nat}\lda F_{\nat}(-,Z)\\
&\leq(F_{\nat}\lda F_{\nat}(-,Z))\circ\CC(Z,Z)\\
&=(F_{\nat}\lda F_{\nat}(-,Z))\circ(F_{\nat}(-,Z)\lda\varphi)\\
&\leq F_{\nat}\lda\varphi\\
&=\CC(Z,-),
\end{align*}
from which one derives $\CC=F_{\nat}\lda F_{\nat}$. The assertion $\CC=G^{\nat}\rda G^{\nat}$ follows dually.

Second, the codomain of $\widehat{F_{\nat}}$ may be restricted to $\sI\Phi$ since for all $Z\in\ob\PE$,
\begin{align*}
\dPhi(G^{\nat}(Z,-))&=G^{\nat}(Z,-)\rda\Phi\\
&=G^{\nat}(Z,-)\rda(G^{\nat}\circ F_{\nat})\\
&=(G^{\nat}(Z,-)\rda G^{\nat})\circ F_{\nat}\\
&=\CC(-,Z)\circ F_{\nat}\\
&=F_{\nat}(-,Z).
\end{align*}
Here the third equation holds since one easily derives $\Psi\circ F_{\nat}=\Psi\lda F^{\nat}$ for any $\Psi$, and consequently
$$G^{\nat}(Z,-)\rda(G^{\nat}\circ F_{\nat})=G^{\nat}(Z,-)\rda(G^{\nat}\lda F^{\nat})=(G^{\nat}(Z,-)\rda G^{\nat})\lda F^{\nat}=(G^{\nat}(Z,-)\rda G^{\nat})\circ F_{\nat}.$$

Next, $\widehat{F_{\nat}}$ is fully faithful since for all $X,X'\in\ob\CE$,
$$\sI\Phi(\widehat{F_{\nat}}X,\widehat{F_{\nat}}X')=F_{\nat}(-,X')\lda F_{\nat}(-,X)=\CC(X,X').$$

Finally, to see that $\widehat{F_{\nat}}$ is surjective, for each $\varphi\in\ob\sI\Phi$ one forms $Z=\varphi\star F\in\ob\CC$, then
$$G^{\nat}(Z,-)=G^{\nat}\circ\CC(Z,-)=G^{\nat}\circ(F_{\nat}\lda\varphi)=(G^{\nat}\circ F_{\nat})\lda\varphi=\Phi\lda\varphi=\uPhi\varphi,$$
and consequently $\widehat{F_{\nat}}Z=\dPhi(G^{\nat}(Z,-))=\dPhi\uPhi\varphi=\varphi$.
\end{proof}

\begin{rem} \label{IPhi_fully_faithful}
If the $\CQ$-distributor $\Phi$ in Theorem \ref{IPhi_characterization} satisfies $\Phi\rda\Phi=\CE$, $\Phi\lda\Phi=\CD$, then the functors $F:\CE\to\sI\Phi$, $G:\CD\to\sI\Phi$ as defined in Theorem \ref{IPhi_characterization} may be assumed to be fully faithful. Indeed, for all $X,Y\in\ob\CE$ one then has
\begin{align*}
\sI\Phi(FX,FY)&=\PE(\dPhi\uPhi\sY_{\CE}X,\dPhi\uPhi\sY_{\CE}Y)\\
&=\PdD(\uPhi\sY_{\CE}X,\uPhi\sY_{\CE}Y)\\
&=(\Phi\lda\sY_{\CE}Y)\rda(\Phi\lda\sY_{\CE}X)\\
&=(\Phi\rda\Phi)(X,Y)\\
&=\CE(X,Y),
\end{align*}
and likewise for $G$.
\end{rem}

\begin{cor} \label{dense_codense_subcategory}
Let $\CE$ be a full $\CQ$-subcategory of $\CC$. Then $\CC$ is equivalent to the Isbell $\CQ$-category $\sI\CE$ of (the identity $\CQ$-distributor) $\CE$ if, and only if, $\CC$ is total and $\CE$ is both dense and codense in $\CC$.
\end{cor}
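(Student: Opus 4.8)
The plan is to obtain this as the special case of Theorem~\ref{IPhi_characterization} in which $\Phi$ is the identity $\CQ$-distributor $\CE$ (so that $\CD=\CE$) and both $F$ and $G$ are taken to be the inclusion $\CQ$-functor $\iota\colon\CE\hookrightarrow\CC$. The first observation is that the hypothesis that $\CE$ is a \emph{full} $\CQ$-subcategory says precisely that $\iota$ is fully faithful, i.e.\ $\iota^{\nat}\circ\iota_{\nat}=\CE$. Indeed $(\iota^{\nat}\circ\iota_{\nat})(X,Y)=\bv_{Z\in\ob\CC}\CC(Z,\iota Y)\circ\CC(\iota X,Z)$, which is $\leq\CC(\iota X,\iota Y)=\CE(X,Y)$ by the composition law of $\CC$ together with fullness, the reverse inequality arising from the summand $Z=\iota X$. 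Since $\Phi=\CE$, this is exactly condition~(1) of Theorem~\ref{IPhi_characterization} for $F=G=\iota$; and for $F=G=\iota$ conditions~(2) and~(3) read, by definition, as density and codensity of $\iota$, that is, of $\CE$ in $\CC$.

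For the direction ``$\Leftarrow$'', assume $\CC$ is total and $\CE$ is dense and codense in $\CC$. Then $F=G=\iota$ satisfies~(1)--(3), and Theorem~\ref{IPhi_characterization} delivers $\CC\cong\sI\CE$ at once.

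For the direction ``$\Rightarrow$'', assume $\CC\cong\sI\CE$. Totality is free, since an Isbell $\CQ$-category is total by Theorem~\ref{total_IPhi} and totality is invariant under equivalence (being, e.g., condition~(ii) of that theorem). The remaining task is to read off density and codensity of the \emph{given} inclusion $\iota$, and this is where the one genuine difficulty lies: Theorem~\ref{IPhi_characterization} only guarantees \emph{some} dense $F$ and codense $G$ with $G^{\nat}\circ F_{\nat}=\CE$, which a priori need not be $\iota$, and indeed an abstract equivalence $\CC\cong\sI\CE$ alone does not force the subcategory $\CE$ to be dense (for instance the even numbers constitute a non-dense isomorphic copy of $\omega$ inside its MacNeille completion $\omega\cup\{\infty\}\cong\sI\omega$). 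What saves the argument is that for the identity distributor $\Phi=\CE$ the two functors produced in the necessity half of Theorem~\ref{IPhi_characterization} are both the Yoneda $\CQ$-functor $\sY_{\CE}$: one checks $\dPhi\uPhi\sY_{\CE}X=\sY_{\CE}X$ because representables are fixed by the Isbell closure, and $\dPhi\sYd_{\CD}Y=\sY_{\CE}Y$ by a direct computation, while Remark~\ref{IPhi_fully_faithful} applies since $\Phi\rda\Phi=\CE=\Phi\lda\Phi$. Hence the Yoneda image of $\CE$ is dense and codense in $\sI\CE$, and transporting this along an equivalence $\CC\cong\sI\CE$ under which $\iota$ corresponds to $\sY_{\CE}$ yields density and codensity of $\iota$. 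Confirming that the equivalence may be taken to extend the inclusion in this way --- the natural reading of the statement for a completion --- is exactly the step that must be handled with care.
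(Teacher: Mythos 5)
Your proof takes exactly the paper's route: sufficiency is obtained by applying Theorem~\ref{IPhi_characterization} to $\Phi=\CE$ with $F=G$ the inclusion, and necessity by combining Theorem~\ref{IPhi_characterization} with Remark~\ref{IPhi_fully_faithful}; indeed the paper's entire proof consists of these two sentences. Your supporting computations are correct: fullness of $\iota$ gives $\iota^{\nat}\circ\iota_{\nat}=\CE$, which is condition (1), and for the identity distributor the two functors constructed in the necessity half of Theorem~\ref{IPhi_characterization} both reduce to $\sY_{\CE}$ (representables are fixed by the Isbell adjunction), while $\CE\rda\CE=\CE=\CE\lda\CE$ makes Remark~\ref{IPhi_fully_faithful} applicable. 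The one point where you go beyond the paper is the ``delicate point'' you flag, and you are right that it is genuine: $\sI\CE$ depends on $\CE$ only as an abstract $\CQ$-category, not on how it sits inside $\CC$, so a bare equivalence $\CC\simeq\sI\CE$ cannot force the \emph{given} subcategory to be dense or codense; your example of the even numbers inside $\omega+1\cong\sI\omega$ shows that the literal reading of the ``only if'' direction is false. The corollary is therefore correct only under the reading you adopt --- the equivalence should be compatible with the inclusion $\CE\hookrightarrow\CC$ and with $\sY_{\CE}:\CE\to\sI\CE$, which is also the reading implicit in the paper's subsequent definition of the MacNeille completion --- and under that reading your transport argument completes the proof. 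The paper's own two-sentence proof passes over this issue silently, so your proposal is, if anything, the more careful of the two.
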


\begin{proof}
The sufficiency of the condition follows with Theorem \ref{IPhi_characterization} when one puts $\Phi=\CE$, with $F=G$ the inclusion $\CQ$-functor to $\CC$. For the necessity one involves Theorem \ref{IPhi_characterization} and Remark \ref{IPhi_fully_faithful}.
\end{proof}

For a concrete category $\CE$ over $\CB$, the Isbell adjunction of $\CE$ (considered as the identity $\QB$-distributor of $\CE$)
$$\bfig
\morphism|a|/@{->}@<6pt>/<500,0>[\PE`\PdE;\CE_{\ua}]
\morphism|b|/@{<-}@<-6pt>/<500,0>[\PE`\PdE;\CE^{\da}]
\place(230,0)[\bot]
\efig$$
is described by
\begin{align*}
(\CE_{\ua}\varphi)_Y&=\bw_{X\in\ob\CE}\CE(X,Y)\lda\varphi_X\\
&=\{g\in\CB(T,|Y|)\mid\forall X\in\ob\CE,f:|X|\to T\ \text{in}\ \varphi_X:\ g\circ f\ \text{is an}\ \CE\text{-morphism}\},\\
(\CE^{\da}\psi)_X&=\bw_{Y\in\ob\CE}\psi_Y\rda\CE(X,Y)\\
&=\{f\in\CB(|X|,T)\mid\forall Y\in\ob\CE,g:T\to|Y|\ \text{in}\ \psi_Y:\ g\circ f\ \text{is an}\ \CE\text{-morphism}\}
\end{align*}
for every structured sink $\varphi$ in $\PE$ with codomain $|\varphi|=T$, and every structured source $\psi$ in $\PdE$ with domain $|\psi|=T$. The full subcategory $\sI\CE$ of $\PE$ contains the structured sinks fixed under the correspondence.

Recall that a full subcategory $\CE$ is finally dense in the concrete category $\CC$ over $\CB$ if every $\CC$-object is the codomain of some $(|\text{-}|)$-final structured sink with domains in $\CE$. Initial density is the dual concept.

\begin{cor}
Let $\CE$ be a full subcategory of a concrete category $\CC$ over $\CB$. Then $\CC$ is concretely equivalent to $\sI\overline{\CE}$ if, and only if, $\CC$ is topological over $\CB$ and $\CE$ is finally and initially dense in $\CC$.
\end{cor}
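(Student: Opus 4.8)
The plan is to deduce this statement from its abstract counterpart, Corollary \ref{dense_codense_subcategory}, by specializing to the free quantaloid $\CQ=\QB$ and passing from the concrete categories $\CE\subseteq\CC$ to the associated $\QB$-categories $\overline{\CE}\subseteq\overline{\CC}$. First I would note that, since the correspondence $\CE\mapsto\overline{\CE}$ underlies the 2-equivalence $\CATB\simeq\QB\text{-}\CAT$ recalled in Section \ref{concrete}, a concrete equivalence $\CC\simeq\sI\overline{\CE}$ exists precisely when $\overline{\CC}$ and $\sI\overline{\CE}$ are equivalent as $\QB$-categories, and that $\overline{\CE}$ is a full $\QB$-subcategory of $\overline{\CC}$ whenever $\CE$ is a full subcategory of $\CC$. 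Thus Corollary \ref{dense_codense_subcategory}, applied to the inclusion $\overline{\CE}\hookrightarrow\overline{\CC}$, reduces the claim to three translations: $\overline{\CC}$ total $\iff$ $\CC$ topological over $\CB$; $\overline{\CE}$ dense in $\overline{\CC}$ $\iff$ $\CE$ finally dense in $\CC$; and $\overline{\CE}$ codense in $\overline{\CC}$ $\iff$ $\CE$ initially dense in $\CC$.

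The first translation is immediate from Garner's Theorem \ref{topological_total}. The heart of the argument is therefore the second one, the third following by dualization (using $\overline{\CC^{\op}}=\overline{\CC}^{\op}$ and $\CQ_{\CB^{\op}}=(\CQ_{\CB})^{\op}$ as in Corollary \ref{topological_dual}). Here I would unwind the definition of density for the inclusion $J:\overline{\CE}\hookrightarrow\overline{\CC}$: by condition (2) of Theorem \ref{IPhi_characterization}, $\overline{\CE}$ is dense in $\overline{\CC}$ exactly when every object $Z$ of $\overline{\CC}$ is a weighted colimit $Z\cong\varphi\star J$ for some presheaf $\varphi\in\ob\sP\overline{\CE}$. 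Invoking the identity $\varphi\star J\cong\sup J_!\varphi$ from Section \ref{weighted_colimit} together with the identification of suprema of presheaves with final liftings of structured sinks from Section \ref{sink_topological}, this says precisely that every $\CC$-object arises as the final lifting of a structured sink whose domains lie in $\CE$, i.e. that $\CE$ is finally dense in $\CC$. Conversely, any $(|\text{-}|)$-final sink with domains in $\CE$ and codomain $Z$ determines, after closing it under right composition with $\CE$-morphisms as in (\ref{si_dist}), a presheaf $\varphi$ on $\overline{\CE}$ with $Z\cong\varphi\star J$, so that the two notions coincide.

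The main obstacle I anticipate is making this last dictionary fully precise: one must check that the pushforward $J_!\varphi$ of a presheaf on $\overline{\CE}$ recovers exactly the structured sink with $\CE$-domains underlying a final lifting, and that no generality is lost in assuming the sink closed and indexed by $\ob\CE$. Both points are already handled by the normalizations carried out in Section \ref{sink_topological}, where closing a sink under right composition with morphisms is shown not to alter its final lifting; so the remaining work is essentially bookkeeping. With the three equivalences in hand, Corollary \ref{dense_codense_subcategory} yields the claim.
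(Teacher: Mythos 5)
Your proposal is correct and takes essentially the same route as the paper: the paper's proof likewise reduces the statement to Corollary \ref{dense_codense_subcategory} via Theorem \ref{topological_total} (topological $\iff$ total) and the identifications ``final density $=$ density'' and ``initial density $=$ codensity'' of $\overline{\CE}$ in $\overline{\CC}$. The only difference is that you spell out the sink/presheaf dictionary behind these identifications (via $\varphi\star J\cong\sup J_!\varphi$ and the normalizations of Section \ref{sink_topological}), which the paper's two-line proof leaves implicit.
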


\begin{proof}
Final density amounts to density as a $\QB$-category, and initial density to codensity. Hence, Corollary \ref{dense_codense_subcategory} applies.
\end{proof}

A topological category $\CC$ over $\CB$ is called the \emph{MacNeille completion} of its full subcategory $\CE$ if $\CE$ is finally and initially dense in $\CC$ (see \cite{Adamek1990}). As the description of $\sI\CE$ above shows, in that case the category $\CC$ may be built constructively from $\CE$. The smaller $\CE$ may be chosen the stronger the benefit of this fact becomes.

\begin{exmp} {\rm\cite{Davey2002}}
For a complete lattice $L$ (considered as a small topological category over the terminal category ${\bf 1}$) with no infinite chains, the subsets of join-irreducible elements $J$ and of meet-irreducible elements $M$ are respectively finally dense and initially dense in $L$. Let $\Phi:J\oto M$ be the order relation inherited from $L$ between ordered sets $J,M$ considered as categories concrete over ${\bf 1}$, then the assignment $x\mapsto\da x\cap J$ gives rise to the isomorphism $L\cong\sI\Phi$. The union $J\cup M$ is both finally and initially dense in $L$, which is therefore its MacNeille completion.
\end{exmp}

\begin{exmp}
In the topological category $\Ord$ over $\Set$ of preordered sets and their monotone maps, the full subcategory with the two-element chain ${\bf 2}$ as its only object is both finally and initially dense. Indeed, for every $X\in\ob\Ord$, the sink $\Ord({\bf 2},X)$ described by all pairs $(x,y)$ with $x\leq y$ in $X$) is final, and the structured source $\psi$ with $\psi_{\bf 2}=\Ord(X,{\bf 2})$ (described by all downsets in $X$) is initial; in fact, already the source of the principal downsets $\da x$, $x\in X$ is initial.
\end{exmp}

\begin{exmp} {\rm\cite{Herrlich1992}}
The category $\Rel$ of sets that comes equipped with an arbitrary relation on them and their relation-preserving maps as morphisms is topological over $\Set$. The full subcategory with $A = \{0,1\}$ equipped with the relation $\{(0,1)\}$ as its only object is finally dense; indeed, for every $X\in\ob\Rel$, the sink $\Rel(A,X)$ describes all related pairs in $X$ and is therefore final. An initially dense one-object full subcategory may be given by equipping the set $B = \{0,1\}$ with the relation $\{(0,0),(0,1),(1,0)\}$; indeed, for every $X\in\ob\Rel$, initiality of the source $\Rel(X,B)$ is easily established.
\end{exmp}

While only few topological categories over $\Set$ contain a one-object subcategory that is both finally and initially dense, there is a general type of topological categories over $\Set$ that admits a one-object initially dense subcategory. Indeed, if $\CQ$ is a (small) quantale, i.e., a one-object quantaloid, then $\CQ$ becomes a $\CQ$-category whose objects are the elements of $\CQ$, and whose hom-arrows are given by
$$\CQ(u,v)=v\lda u\quad(u,v\in\CQ).$$

\begin{prop} \label{Q_initial_dense_QCat}
For a quantale $\CQ$, the functor $\ob:\QCat\to\Set$ is topological, and the full subcategory with $\CQ$ as its only object is initially dense in $\QCat$.
\end{prop}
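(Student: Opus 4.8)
The plan is to treat the two assertions separately: first that $\ob:\QCat\to\Set$ admits all initial liftings, which by the self-duality of topologicity (Corollary \ref{topological_dual}) already yields topologicity; and second that the representable $\CQ$-functors into $\CQ$ furnish an initial source for every $\CQ$-category, giving initial density.

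For the first assertion, recall that over the one-object quantale $\CQ$ a $\CQ$-category is simply a set $X$ with hom-arrows $\CE(x,y)\in\CQ$ subject to $1\leq\CE(x,x)$ and $\CE(y,z)\circ\CE(x,y)\leq\CE(x,z)$, and that $\ob$ is faithful since a $\CQ$-functor is precisely a map on objects satisfying the usual inequality. Given a structured source $(S,Y_i,f_i)_{i\in I}$ over a set $S$, with $Y_i$ a $\CQ$-category and $f_i:S\to\ob Y_i$, I would put
$$a(s,t)=\bw_{i\in I}Y_i(f_is,f_it).$$
The $\CQ$-category axioms for $a$ are immediate: $1\leq Y_i(f_is,f_is)$ for each $i$ gives $1\leq a(s,s)$, while monotonicity of composition and the axioms of each $Y_i$ give $a(t,u)\circ a(s,t)\leq Y_i(f_it,f_iu)\circ Y_i(f_is,f_it)\leq Y_i(f_is,f_iu)$ for every $i$, hence $a(t,u)\circ a(s,t)\leq a(s,u)$. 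By construction every $f_i$ is a $\CQ$-functor, and for any $\CQ$-category $W$ and map $g:\ob W\to S$ one has $W(w,w')\leq a(gw,gw')$ if and only if $W(w,w')\leq Y_i(f_igw,f_igw')$ for all $i$, that is, if and only if every $f_ig$ is a $\CQ$-functor; thus $(S,a)$ is the initial lifting. Existence of all initial liftings says precisely that $\QCat^{\op}$ is topological over $\Set^{\op}$, so Corollary \ref{topological_dual} delivers topologicity of $\ob$. (Final liftings exist as well, but their construction requires a transitive-closure step, which is why the self-dual, initial description is preferable here.)

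For the second assertion I would exhibit, for each object $z$ of a $\CQ$-category $\CE$, the dual-Yoneda map $\CE(z,-):\CE\to\CQ$, $x\mapsto\CE(z,x)$, and observe that it is a $\CQ$-functor: the required inequality $\CE(x,y)\leq\CE(z,y)\lda\CE(z,x)$ is, after adjunction, exactly the composition axiom $\CE(x,y)\circ\CE(z,x)\leq\CE(z,y)$. It then remains to check that the source $(\CE(z,-))_{z\in\ob\CE}$ is initial, i.e. that
$$\CE(x,y)=\bw_{z\in\ob\CE}\big(\CE(z,y)\lda\CE(z,x)\big).$$
The inequality ``$\leq$'' is just $\CQ$-functoriality of each $\CE(z,-)$. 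For ``$\geq$'' it suffices to single out the summand $z=x$: since $1\leq\CE(x,x)$, while $w\lda(-)$ is antitone and $w\lda 1=w$, one gets $\CE(x,y)\lda\CE(x,x)\leq\CE(x,y)\lda 1=\CE(x,y)$, so the meet is bounded above by $\CE(x,y)$. Comparing with the initial-lifting formula of the first part, the structure induced on $\ob\CE$ by this source is exactly $\CE$; hence $\CE$ is the domain of an initial source with codomains in $\{\CQ\}$, and since $\CE$ was arbitrary, $\{\CQ\}$ is initially dense.

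The computations are elementary, so I do not anticipate a serious obstacle; the only points demanding care are the decision to argue via initial rather than final liftings (the latter needing a closure) and the recognition that the single representable $\CE(x,-)$ already forces the meet down to $\CE(x,y)$, with the unit inequality $1\leq\CE(x,x)$ playing the essential role.
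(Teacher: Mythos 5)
Your proposal is correct, and on the initial-density half it is exactly the paper's argument: the paper names the same source $\CE(X,-):\CE\to\CQ$ $(X\in\ob\CE)$ and declares its initiality ``easily verified''; you supply that verification, correctly isolating the summand $z=x$ and using $1\leq\CE(x,x)$ together with antitonicity of $w\lda(-)$ and $w\lda 1=w$. The difference lies in the topologicity half: the paper simply cites \cite[Theorem III.3.1.3]{Hofmann2014}, whereas you give a self-contained proof by constructing initial liftings via the pointwise meet $a(s,t)=\bw_{i}Y_i(f_is,f_it)$ and then invoking the paper's own self-duality result (Corollary \ref{topological_dual}) to pass from initial liftings of sources to topologicity. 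This is sound: your verification of the composition axiom only uses monotonicity of $\circ$ in each variable (not preservation of meets, which would fail), the meet exists even for a large index class since $\CQ$ is a small complete lattice, and your side remark that final liftings would need a transitive-closure step correctly explains why the initial description is the convenient one. What your route buys is independence from the external reference, at the modest cost of leaning on Corollary \ref{topological_dual}; it also makes visible that the initial structure induced by the representable source coincides with $\CE$ itself, which is precisely the statement of initial density.
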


\begin{proof}
For the topologicity assertion, see \cite[Theorem III.3.1.3]{Hofmann2014}, and for the fact that $\{\CQ\}$ is initially dense in $\QCat$, see \cite[Exercise III.1.H]{Hofmann2014}. Indeed, given a small $\CQ$-category $\CE$, initiality of the source
$$\CE(X,-):\CE\to \CQ\quad(X\in\ob\CE)$$
is easily verified.
\end{proof}

\begin{rem}
As has been shown in \cite{Shen2015a}, the first assertion of Proposition \ref{Q_initial_dense_QCat} easily generalizes from quantales to quantaloids: $\QCat$ is topological over $\Set/\ob\CQ$ for any small quantaloid $\CQ$.
\end{rem}

For $\CQ={\bf 2}$ the two-element chain (with $\circ$ given by meet), Proposition \ref{Q_initial_dense_QCat} reproduces the initiality assertion for ${\bf 2}$ in $\Ord={\bf 2}\text{-}\Cat$. For $\CQ=([0,\infty],\geq)$ (with $\circ$ given by $+$), $\QCat=\Met$ is Lawvere's category of generalized metric spaces $(X,d)$ (with $d:X\times X\to[0,\infty]$ satisfying $d(x,x)=0$ and $d(x,z)\leq d(x,y)+d(y,z)$ for all $x,y,z\in X$) and their non-expanding maps $f:(X,d)\to(Y,e)$ (with $e(f(x),f(y))\leq d(x,y)$ for all $x,y\in X$).

\begin{exmp}
$([0,\infty],h)$ with
$$h(r,s)=\begin{cases}
s-r, & \text{if}\ r\leq s<\infty,\\
0, & \text{if}\ s\leq r,\\
\infty, & \text{if}\ r<s=\infty
\end{cases}$$
is initially dense in $\Met$ but not finally dense.
\end{exmp}

\section{Total $\CQ$-categories as injective objects} \label{total_injective}

It is well-known (see \cite{Stubbe2013a}) that small total $\CQ$-categories are characterized as the injective objects in $\QCat$. For a large $\CQ$-category $\CE$, the standard proof which employs $\PE$ as a test object, has to be modified as $\PE$ may be illegitimately large. We therefore give a modified proof that is valid also in the large case.

\begin{thm}[Stubbe \cite{Stubbe2013a}] \label{total_injective_fff}
A $\CQ$-category is total if, and only if, it is injective\footnote{Strictly speaking, ``injective'' should read ``\emph{quasi-injective}'', since, in the proof below, generally we obtain $H$ only with $HG\cong F$, not $HG=F$.} in $\QCAT$ w.r.t. fully faithful $\CQ$-functors.
\end{thm}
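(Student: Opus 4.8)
The plan is to prove the two implications separately, invoking the characterizations of totality from Theorem \ref{total_equivalent} and confining the legitimacy (size) question to the harder direction, which is precisely where the advertised modification is needed.

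\emph{Totality implies injectivity.} Suppose $\CE$ is total, hence totally cocomplete by Theorem \ref{total_equivalent}, and let $G\colon\CA\to\CB$ be fully faithful and $F\colon\CA\to\CE$ arbitrary. I would take $H\colon\CB\to\CE$ to be the pointwise left Kan extension of $F$ along $G$, setting
$$HB=G_{\nat}(-,B)\star F\qquad(B\in\ob\CB),$$
where $G_{\nat}(-,B)=\CB(G-,B)$ is readily checked to be a presheaf on $\CA$. This weighted colimit exists in $\CE$ because $\CE$ is totally cocomplete, and equals $\sup_{\CE}F_!(G_{\nat}(-,B))$; note that only individual presheaves and their suprema are manipulated, so no illegitimately large category is materialized. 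That $H$ is a $\CQ$-functor follows from functoriality of weighted colimits in the weight. To obtain $HG\cong F$, full faithfulness of $G$ gives $G_{\nat}(-,GA)=\CB(G-,GA)=\CA(-,A)=\sY_{\CA}A$, and a representable weight reproduces the value, $\sY_{\CA}A\star F\cong FA$, which is a one-line consequence of the Yoneda Lemma \ref{Yoneda_lemma}. Thus $HGA\cong FA$, establishing (quasi-)injectivity.

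\emph{Injectivity implies totality.} By Theorem \ref{total_equivalent} it suffices to show that every presheaf $\varphi\in\ob\PE$ has a supremum. The naive argument extends $1_{\CE}$ along $\sY_{\CE}\colon\CE\to\PE$, but $\PE$ need not be a legitimate object of $\QCAT$. Instead, for a \emph{single} $\varphi$ I would use the full sub-$\CQ$-category $\CC$ of $\PE$ on the object class $\{\sY_{\CE}X\mid X\in\ob\CE\}\cup\{\varphi\}$: its objects form a legitimate class (that of $\CE$ together with one extra object) and its hom-arrows are arrows of $\CQ$, so $\CC$ is a bona fide object of $\QCAT$ even when $\PE$ is not. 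By the Yoneda Lemma \ref{Yoneda_lemma} the assignment $J\colon X\mapsto\sY_{\CE}X$ is a fully faithful $\CQ$-functor $\CE\to\CC$, with $\CC(JX,\varphi)=\varphi_X$ and $\CC(\varphi,JZ)=\PE(\varphi,\sY_{\CE}Z)$.

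Applying injectivity to the fully faithful $J$ and to $1_{\CE}\colon\CE\to\CE$ yields $H\colon\CC\to\CE$ with $HJ\cong 1_{\CE}$; put $Y=H\varphi$, so that $|Y|=|\varphi|$. Since $H$ is a $\CQ$-functor and $HJX\cong X$, functoriality gives $\varphi_X=\CC(JX,\varphi)\leq\CE(X,Y)$ and $\PE(\varphi,\sY_{\CE}Z)=\CC(\varphi,JZ)\leq\CE(Y,Z)$ for all $X,Z\in\ob\CE$. For the reverse inequality, from $\varphi_X\leq\CE(X,Y)$ one gets $\CE(Y,Z)\circ\varphi_X\leq\CE(Y,Z)\circ\CE(X,Y)\leq\CE(X,Z)$, whence $\CE(Y,Z)\leq\CE(X,Z)\lda\varphi_X$ for every $X$ and therefore $\CE(Y,Z)\leq\bw_{X\in\ob\CE}\CE(X,Z)\lda\varphi_X=\PE(\varphi,\sY_{\CE}Z)$. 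Combining the two inequalities yields $\CE(Y,Z)=\PE(\varphi,\sY_{\CE}Z)$ for all $Z$, which is exactly the defining condition (\ref{sup_Y_adjoint}) of a supremum; thus $Y=\sup\varphi$, and $\CE$ has all suprema.

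\emph{Main obstacle.} The only genuine difficulty lies in this second direction: one must replace the illegitimately large test object $\PE$ by something living in $\QCAT$ while still forcing suprema to exist. The device above — adjoining to a Yoneda copy of $\CE$ the single object $\varphi$ whose supremum is sought — resolves this, and the verification that the resulting $Y=H\varphi$ is a genuine supremum (not merely an upper bound) is where full faithfulness of $J$, via the Yoneda Lemma, does the essential work. Some care is also required because injectivity supplies only $HJ\cong 1_{\CE}$ rather than an equality, but each inequality above is stable under this isomorphism.
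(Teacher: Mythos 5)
Your proposal is correct and takes essentially the same approach as the paper: the same pointwise Kan extension $HB=\sup F_!(G_{\nat}(-,B))$ for the ``total implies injective'' direction, and the same size-avoiding device for the converse, namely applying injectivity to the fully faithful embedding of $\CE$ into the legitimate full subcategory of $\PE$ on $\{\sY_{\CE}X\mid X\in\ob\CE\}\cup\{\varphi\}$. Your verification that $H\varphi=\sup\varphi$ merely rearranges the paper's single chain of (in)equalities into two separate bounds (using the composition axiom in place of full faithfulness of Yoneda), which is a cosmetic difference.
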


\begin{proof}
For $\CE$ total and $\CQ$-functors $F:\CC\to\CE$, $G:\CC\to\CD$ with $G$ fully faithful we must find $H:\CD\to\CE$ with $HG\cong F$. With $\widehat{G_{\nat}}$ denoting the transpose of $G_{\nat}:\CC\oto\CD$, we may define
$$H=(\CD\to^{\widehat{G_{\nat}}}\PC\to^{F_!}\PE\to^{\sup}\CE).$$
One then has, for all $X\in\ob\CE$,
\begin{align*}
HGX&=\sup F_!(G_{\nat}(-,GX))\\
&=\sup F_!(\CD(G-,GX))\\
&=\sup F_!\sY_{\CC}X&(G\ \text{fully faithful})\\
&=\sup\sY_{\CE}FX&(\sY:1\to\sP\ \text{natural})\\
&\cong FX.
\end{align*}

Conversely, for a $\CQ$-category $\CE$ and $\varphi\in\ob\PE$, in order to find the supremum of $\varphi$, one may exploit its injectivity in $\QCAT$ on the fully faithful $\CQ$-functor $\sY_{\CE}:\CE\to\CD$, where $\CD$ is the $\CQ$-subcategory of the $\CQ$-(meta)category $\PE$ with $\ob\CD=\{\sY_{\CE}X\mid X\in\ob\CE\}\cup\{\varphi\}$. Thus one obtains a $\CQ$-functor $H:\CD\to\CE$ with $H\sY_{\CE}=1_{\CE}$. Note that for all $X\in\ob\CE$,
\begin{align*}
\CE(H\varphi,X)&=\PE(\sY_{\CE}H\varphi,\sY_{\CE}X)&\text{(Yoneda Lemma)}\\
&=\sY_{\CE}X\lda\CE(-,H\varphi)\\
&=\sY_{\CE}X\lda\CE(H\sY_{\CE}-,H\varphi)&(H\sY_{\CE}=1_{\CE})\\
&\leq\sY_{\CE}X\lda\CD(\sY_{\CE}-,\varphi)\\
&=\sY_{\CE}X\lda\varphi&\text{(Yoneda Lemma)}\\
&=\CD(\varphi,\sY_{\CE}X)\\
&\leq\CE(H\varphi,H\sY_{\CE}X)\\
&=\CE(H\varphi,X).&(H\sY_{\CE}=1_{\CE})
\end{align*}
Therefore $\CE(H\varphi,X)=\sY_{\CE}X\lda\varphi=\PE(\varphi,\sY_{\CE}X)$ for all $X\in\ob\CE$ and, consequently, $H\varphi$ is the supremum of $\varphi$.
\end{proof}

Exploiting Theorem \ref{total_injective_fff} for $\CQ=\QB$, where $\CB$ is an ordinary category, one reproduces a classical result of categorical topology:

\begin{cor}[Br{\" u}mmer-Hoffmann \cite{Brummer1976}]
For a topological category $\CE$ over $\CB$ and concrete functors $F:\CC\to\CE$, $G:\CC\to\CD$ over $\CB$ with $G$ fully faithful, there is a concrete functor $H:\CD\to\CE$ with $HG\cong F$ concretely isomorphic. Conversely, this property characterizes topologicity of $\CE$ over $\CB$.
\end{cor}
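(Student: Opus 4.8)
The plan is to read this corollary off from Theorem~\ref{total_injective_fff} with $\CQ=\QB$, using the dictionary between concrete categories over $\CB$ and $\QB$-categories set up in Section~\ref{concrete}. Recall the 2-equivalence $\CATB\simeq\QB\text{-}\CAT$ established there: concrete functors over $\CB$ correspond, up to this equivalence, to $\QB$-functors between the associated $\QB$-categories $\overline{\CC},\overline{\CD},\overline{\CE}$, and concrete isomorphisms correspond to isomorphisms of $\QB$-functors. The strategy is thus to translate every hypothesis and conclusion into the enriched language and then invoke the cited theorem verbatim.

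There are three translations to verify. First, by Theorem~\ref{topological_total}, $\CE$ is topological over $\CB$ if and only if the $\QB$-category $\overline{\CE}$ is total. Second, a concrete functor $G:\CC\to\CD$ is fully faithful exactly when the corresponding $\QB$-functor satisfies $\overline{\CC}(X,Y)=\overline{\CD}(GX,GY)$ for all $X,Y\in\ob\CC$, i.e.\ when it is fully faithful as a $\QB$-functor: concreteness makes $G$ automatically faithful, while fullness says that a map $g:|X|\to|Y|$ is a $\CC$-morphism precisely when it is a $\CD$-morphism, which is exactly the asserted equality of the two subsets of $\CB(|X|,|Y|)$. Third, ``$HG\cong F$ concretely isomorphic'' is the translation of ``$HG\cong F$'' in $\QB\text{-}\CAT$. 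Granting these identifications, the forward implication is precisely the statement that the total $\QB$-category $\overline{\CE}$ is injective in $\QCAT$ with respect to the fully faithful $\QB$-functor $G$, yielding $H$ with $HG\cong F$; and the converse is the assertion that this injectivity property forces $\overline{\CE}$ to be total, hence $\CE$ topological.

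I expect the only delicate point to be the bookkeeping across the equivalence of Section~\ref{concrete}: namely that ``fully faithful'' and ``concretely isomorphic'' in $\CATB$ really correspond to ``fully faithful'' and ``$\cong$'' in $\QB\text{-}\CAT$, and that the equivalence behaves correctly at the meta-level, so that Theorem~\ref{total_injective_fff}~--- whose proof was deliberately arranged to remain valid for large $\CE$~--- applies to the possibly large concrete categories $\CC,\CD,\CE$ here. Once this is checked, no computation remains: the corollary is a pointwise instance of Theorem~\ref{total_injective_fff}.
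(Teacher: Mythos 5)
Your proposal coincides with the paper's own treatment: the corollary is presented there precisely as the specialization of Theorem~\ref{total_injective_fff} to $\CQ=\QB$, read through the 2-equivalence of Section~\ref{concrete} and the identification of topologicity with totality in Theorem~\ref{topological_total}. The translation details you spell out (fully faithful concrete functors corresponding to fully faithful $\QB$-functors, concrete isomorphism corresponding to isomorphism of $\QB$-functors, and applicability of the theorem to large categories) are exactly the bookkeeping the paper leaves implicit, so your argument is correct and essentially identical to the paper's.
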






\end{document}